\theoremstyle{definition}
\newtheorem{THM}{Theorem}[section]
\newtheorem{LEM}[THM]{Lemma}
\newtheorem{PROP}[THM]{Proposition}
\newtheorem{DEF}[THM]{Definition}
\newtheorem{RMK}[THM]{Remark}
\newtheorem{EX}[THM]{Example}
\newtheorem*{THM*}{Theorem}
\newtheorem*{LEM*}{Lemma}
\newtheorem*{PROP*}{Proposition}
\newtheorem*{COR*}{Corollary}
\newtheorem*{DEF*}{Definition}
\newtheorem*{RMK*}{Remark}
\newtheorem*{EX*}{Example}
\newcommand{\GLA}[1]{\mathbf{K}\hat{\pi}(#1)}
\newcommand{\UGLA}[1]{(\mathbf{K}G)\bar{\pi}(#1)}
\newcommand{\GSA}[1]{\operatorname{S}(#1)}
\newcommand{\UGSA}[1]{\bar{\operatorname{S}}(#1)}
\newcommand{\GPA}[1]{\operatorname{PS}(#1)}
\newcommand{\GCPA}[1]{\operatorname{CS}(#1)}
\newcommand{\UGPA}[1]{\overline{\operatorname{PS}}(#1)}
\newcommand{\SKQ}[1]{\operatorname{Sk}_q(#1)}
\newcommand{\SKH}[1]{\mathscr{K}_h(#1)}
\newcommand{\SK}[1]{\operatorname{Sk}(#1)}
\title{Poisson algebras of curves on bordered surfaces and skein quantization}
\author{Wataru Yuasa}
\date{}
\address{Department of Mathematics, Tokyo Institute of Technology, 2-12-1 Ookayama, Meguro-ku, Tokyo 152-8551, Japan}
\begin{document}
\begin{abstract}
We define a (co-)Poisson (co)algebra of curves on a bordered surface. 
A bordered surface is a surface whose boundary have marked points. 
Curves on the bordered surface are oriented loops and oriented arcs whose endpoints in the set of marked points. 
We define a (co-)Poisson (co)bracket on the symmetric algebra of a quotient of the vector space spanned by the regular homotopy classes of curves on the bordered surface by generalizing the Goldman bracket and the Turaev cobracket. 
Moreover, we define a Poisson algebra of unoriented curves on a bordered surface and show that a quantization of the Poisson algebra coincides with the skein algebra of the bordered surface defined by Muller.
\end{abstract}
\maketitle
\section{Introduction}
In this paper, we mainly discuss the Goldman Lie algebra of an oriented surface with marked points in its boundary. 
We review the Goldman Lie algebra and some related works.
The Goldman Lie algebra of a closed oriented surface $S$ was defined by Goldman~\cite{Goldman}.
He defined a Lie bracket, called the Goldman bracket, on the free $\mathbb{Z}$-module $\mathbb{Z}\hat{\pi}(S)$ spanned by the set of homotopy classes of oriented loops on $S$ where $\mathbb{Z}$ is the ring of integers.
The Goldman bracket is defined topologically by using the local intersection number and smoothing of two loops at intersection points.
He also defined the Goldman Lie algebra $\mathbb{Z}\bar{\pi}(S)$ of unoriented loops and embedded it into $\mathbb{Z}\hat{\pi}(S)$ as a Lie algebra.
The Lie algebra $\mathbb{Z}\bar{\pi}(S)$ was implicitly considered by Wolpert~\cite{Wolpert}.

On the other hand, 
the space $\operatorname{Hom}(\pi_1(S),G)/G$ of the conjugacy classes of representations of the fundamental group of $S$ to some Lie group $G$ has a symplectic structure introduced by Goldman~\cite{Goldman2}.
This is a generalization of Weil-Petersson form on the Teichm\"uller space of $S$.
Then $C^{\infty}(\operatorname{Hom}(\pi_1(S),G)/G)$ has a Poisson bracket.
Goldman constructed Lie algebra homomorphisms 
$\mathbb{Z}\hat{\pi}(S)\to C^{\infty}(\operatorname{Hom}(\pi_1(S),G)/G)$ and $\mathbb{Z}\bar{\pi}(S)\to C^{\infty}(\operatorname{Hom}(\pi_1(S),G)/G)$ 
for some Lie group $G$.
The definition of a Poisson algebra is described in Appendix.

Roger and Yang~\cite{Roger-Yang} introduced a variation of the Goldman Lie algebra of unoriented loops.
They defined a Poisson algebra $\mathcal{C}(S,P)$ of unoriented curves on an oriented surface $S$ with punctures $P$ in $\operatorname{int}S$ by generalizing the Goldman bracket to unoriented arcs with endpoints in $P$.
Following the approach of Bullock, Frohman and Kania-Bartoszy\'nska~\cite{BFK}, 
they constructed a Poisson homomorphism 
$\mathcal{C}(S,P)\to C^{\infty}(\mathcal{T}^{\text d}(S))$ by using the Weil-Petersson structure of the decorated Teichm\"uller space $\mathcal{T}^{\text d}(S)$ described by Mondello~\cite{Mondello}.

Another aspect of the Goldman Lie algebra appears as a quantization.
Let $\mathbb{C}$ be the field of complex numbers and $\mathbb{C}[h]$ the complex polynomial ring in one variable $h$. 
For a complex vector space $V$, 
we denote by $V[h]$ the set of all polynomials in one variable $h$ with coefficient in $V$.
\begin{DEF*}
A \emph{quantization} of a Poisson $\mathbb{C}$-algebra $B$ with the Poisson bracket $\{\cdot,\cdot\}_B$ is an associative $\mathbb{C}[h]$-algebra $A$ such that
\begin{enumerate}
\item $A$ is isomorphic to $B[h]$ as $\mathbb{C}[h]$-modules ($A$ is a free $\mathbb{C}[h]$-module),
\item $A/hA$ is isomorphic to $B$ as a $\mathbb{C}$-algebras and the isomorphism induces the Poisson algebra isomorphism.
\end{enumerate}
\end{DEF*}
If we denote the quotient map $p\colon A\to A/hA$, the above Poisson structure on $A/hA$ is defined by 
\[
 \{p(a),p(b)\}_{A/hA}=p\left(\frac{ab-ba}{h}\right)
\]
for any $a$ and $b$ in $A$.
A quantization of $B$ over the ring of complex formal power series $\mathbb{C}[[h]]$ is similarly defined by making $A$ a topological algebra and replacing the first condition with topological freeness of $A$ as $\mathbb{C}[[h]]$-module.
(See Drinfeld~\cite{Drinfeld} for details about the quantization.)

Turaev~\cite{Turaev1, Turaev2} gave quantizations of the Poisson algebras associated with the Goldman Lie algebras of oriented and unoriented loops on an oriented surface. 
In his papers, the freeness is not required in the definition of a quantization of a Poisson algebra.
The quantization was given by using a skein algebra of the surface.
The skein algebra is the module generated by the isotopy classes of links in the thickened surface with the skein relation. 
The multiplication of the skein algebra is given by superposition of links.
Hoste-Przytycki~\cite{Hoste-Przytycki} also considered the quantization of the Goldman Lie algebra.
Turaev defined a cobracket, called the Turaev cobracket, on the $\mathbb{Z}$-module generated by the homotopy classes of nontrivial loops on a surface in \cite{Turaev1, Turaev2}.
He gave a co-quantization of the co-Poisson coalgebra.
Moreover, he showed that the Goldman bracket and the Turaev cobracket satisfied compatibility conditions of a Lie bialgebra and gave a bi-quantization of the bi-Poisson bialgebra. 

Bullock, Frohman and Kania-Bartoszy\'nska~\cite{BFK} gave a quantization of the ring of $\operatorname{SL}_2(\mathbb{C})$-charactors of a compact oriented sruface by using the Kauffman bracket skein module of the surface.
Roger and Yang~\cite{Roger-Yang} defined the Poisson algebra $\mathcal{C}(S,P)$ of unoriented curves on punctured surface and gave a quantization by generalizing the Kauffman bracket skein algebra.

In this paper, 
we will define a Poisson algebra $\GPA{\Sigma}^{+}$ and a co-Poisson coalgebra $\GCPA{\Sigma}^{+}$ of the regular homotopy classes of oriented curves on a bordered surface $\Sigma$.
A bordered surface $\Sigma=(S,M)$ is an oriented surface $S$ with the set of marked points $M$ in the boundary of $S$.
The oriented curves on $\Sigma$ is oriented loops and arcs with endpoints in $M$.
The Poisson bracket and the co-Poisson cobracket are generalizations of a regular homotopy version of the Goldman bracket and the Turaev cobracket to those for oriented loops and arcs with endpoints in $M$. 
The Goldman bracket and the Turaev cobracket for regular homotopy classes of loops on a surface was considered by Turaev~\cite{Turaev2} and Kawazumi~\cite{Kawazumi}.
The Poisson algebra is also a generalization of the swapping algebra defined by Labourie~\cite{Labourie2}.
We will also define a Poisson algebra $\UGPA{\Sigma}^{+}$ of unoriented curves on $\Sigma$ and construct a quantization of a quotient Poisson algebra $\SK{\Sigma}$ of $\UGPA{\Sigma}^{+}$.
This quantization is given by using the skein algebra $\SKQ{\Sigma}$ defined by Muller~\cite{Muller}.
We denote by $\SKH{\Sigma}$ the $\mathbb{C}[[h]]$-algebra obtained by substituting $q=\exp(h/2)$ into $\mathbb{C}[q^{\frac{1}{2}},q^{-\frac{1}{2}}]$-algebra $\SKQ{\Sigma}$
\begin{THM*}[\ref{quantization}]
The map $\tilde{p}\colon\SKH{\Sigma}\to\SK{\Sigma}$ gives a quantization of $\SK{\Sigma}$.
\end{THM*}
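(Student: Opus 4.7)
The plan is to verify the two defining conditions of a quantization directly, using Muller's basis theorem as the backbone of the proof and a local skein calculation for the Poisson compatibility.

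First, I would establish the freeness. Muller's theorem gives a $\mathbb{C}[q^{1/2},q^{-1/2}]$-basis of $\SKQ{\Sigma}$ consisting of simple multidiagrams (isotopy classes of collections of unoriented simple arcs and loops in $\Sigma$ with no crossings, meeting each marked point transversely). Tensoring along the ring map $\mathbb{C}[q^{1/2},q^{-1/2}]\to\mathbb{C}[[h]]$, $q\mapsto\exp(h/2)$, shows that the same basis becomes a topological $\mathbb{C}[[h]]$-basis of $\SKH{\Sigma}$, so $\SKH{\Sigma}$ is topologically free. Consequently $\SKH{\Sigma}/h\SKH{\Sigma}$ is the $\mathbb{C}$-vector space spanned by simple multidiagrams.

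Next I would identify $\SKH{\Sigma}/h\SKH{\Sigma}$ with $\SK{\Sigma}$ as $\mathbb{C}$-algebras. At $h=0$ we have $q=1$, so both the crossing skein relation of Muller and the boundary/marked-point relations degenerate to exactly the smoothing and identification relations used to define the Poisson algebra $\SK{\Sigma}$ as a quotient of $\UGPA{\Sigma}^{+}$. Writing the superposition product in $\SKH{\Sigma}$ and setting $h=0$, one sees that the product of two disjointly superposed diagrams becomes the commutative product in $\SK{\Sigma}$. Hence the composite $\tilde{p}\colon \SKH{\Sigma}\twoheadrightarrow \SKH{\Sigma}/h\SKH{\Sigma}\xrightarrow{\sim}\SK{\Sigma}$ is a well-defined surjective $\mathbb{C}$-algebra homomorphism with kernel $h\SKH{\Sigma}$.

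The heart of the proof is then to show that the induced Poisson bracket on $\SKH{\Sigma}/h\SKH{\Sigma}$ agrees with that of $\SK{\Sigma}$. For two simple diagrams $\alpha,\beta$, I would place them in generic position in $\Sigma\times[0,1]$ with $\alpha$ on top and compute the commutator $[\alpha,\beta]$ by moving each over-crossing past the under-crossing using Muller's skein relation; interior double points contribute the usual Kauffman-type difference $(q-q^{-1})(\text{smoothing}_{+}-\text{smoothing}_{-})$, while crossings or meetings at a marked point contribute the boundary-corner relation of Muller. Expanding $q-q^{-1}=h+O(h^{3})$ and dividing by $h$, each contribution of $[\alpha,\beta]/h$ reduces modulo $h$ to the corresponding local term in the definition of the Poisson bracket of $\SK{\Sigma}$, namely the signed sum of smoothings at interior crossings plus the signed sum of arc concatenations at shared marked points. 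Summing over all crossings and marked points, one gets exactly $\{\alpha,\beta\}_{\SK{\Sigma}}$, establishing compatibility of the Poisson structure with the commutator.

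The main obstacle is this last local computation at marked points: interior crossings essentially repeat the classical Turaev/Bullock--Frohman--Kania-Bartoszy\'nska argument, but one must check that Muller's marked-point relations, when expanded to first order in $h$, reproduce exactly the boundary contribution to the Poisson bracket defined in the construction of $\UGPA{\Sigma}^{+}$ and $\SK{\Sigma}$. Once those local identities are verified and orientations/signs are reconciled with the definition of $\SK{\Sigma}$, the theorem follows immediately.
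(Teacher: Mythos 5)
Your proposal follows essentially the same route as the paper: topological freeness of $\SKH{\Sigma}$ via the simple-multicurve basis (Muller's Lemma 4.1, which the paper re-derives through an explicit expansion map rather than by base change), identification of $\SKH{\Sigma}/h\SKH{\Sigma}$ with $\SK{\Sigma}$ by comparing relations at $q=1$, and a first-order-in-$h$ computation of $([a][b]-[b][a])/h$ using the Kauffman relation at interior crossings and Muller's boundary skein relation at marked points, matching the bracket of $\SK{\Sigma}$. The marked-point verification you flag as the main obstacle is exactly the step the paper carries out, with the boundary relation contributing $q^{\pm\frac{1}{2}(\#\ell-\#r)}$ and the identification $N=n_{+}(a,b)-n_{-}(a,b)$, so your plan is sound as stated.
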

On the other hand, 
a relation between $\SKQ{\Sigma}$ and the quantum cluster algebra $\mathcal{A}_q(\Sigma)$ obtained from a triangulation of $\Sigma$ was shown by Muller~\cite{Muller}.
(See Fomin and Zelevinsky~\cite{Fomin-Zelevinsky} and Fomin, Shapiro and Thurston~\cite{Fomin-Shapiro-Thurston} for details about cluster algebras and Berenstein and Zelevinsky~\cite{Berenstein-Zelevinsky} about quantum cluster algebras.)
\begin{THM*}[Theorem 9.8 of Muller~\cite{Muller}]
If a bordered surface $\Sigma$ is triangulable and has at least two marked points in each boundary component, then
\[
 \mathcal{A}_q(\Sigma)=\operatorname{Sk}_q^o(\Sigma).
\]
\end{THM*}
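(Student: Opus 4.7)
The plan is to prove the equality by establishing the two inclusions $\mathcal{A}_q(\Sigma)\subseteq\operatorname{Sk}_q^o(\Sigma)$ and $\operatorname{Sk}_q^o(\Sigma)\subseteq\mathcal{A}_q(\Sigma)$ separately. Fix an ideal triangulation $T$ of $\Sigma$ with arcs $e_{1},\dots,e_{n}$: the interior arcs will play the role of mutable cluster variables and the boundary arcs that of frozen variables. The hypothesis of at least two marked points per boundary component is used both to guarantee that such a triangulation exists and to guarantee that every interior arc admits a flip.

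For the first inclusion, I would exhibit $(e_{1},\dots,e_{n})$ as a compatible quantum cluster $(\mathbf{x},\Lambda,B)$ inside $\operatorname{Sk}_q^o(\Sigma)$, with $B$ the signed adjacency matrix associated to $T$ by Fomin--Shapiro--Thurston and $\Lambda$ determined by the quasi-commutation of the $e_{i}$'s. Two computations must be carried out. First, verify $e_{i}e_{j}=q^{\Lambda_{ij}}e_{j}e_{i}$; this is a local calculation at each shared endpoint that uses Muller's crossing relation at a marked point. Second, for each mutable $e_{k}$, verify the quantum exchange relation $e_{k}\,e_{k}'=q^{\alpha}M_{+}+q^{-\alpha}M_{-}$, where $e_{k}'$ is the arc obtained by flipping $e_{k}$ and $M_{\pm}$ are the two products of opposite sides of the quadrilateral containing $e_{k}$; this follows by resolving the unique crossing between $e_{k}$ and $e_{k}'$ with the Kauffman bracket relation in $\SKQ{\Sigma}$. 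The compatibility $B^{\mathrm T}\Lambda=D$ with $D$ diagonal is then checked directly from the combinatorics of $T$. Once the seed is in place, induction on mutation sequences yields $\mathcal{A}_q(\Sigma)\subseteq\operatorname{Sk}_q^o(\Sigma)$.

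For the reverse inclusion, I would show that every simple arc or loop $C$ on $\Sigma$ expands as a Laurent polynomial in the $e_{i}$ in which only the frozen boundary arcs carry possibly negative exponents. The argument is by induction on the geometric intersection number $\iota(C,T)$: each crossing of $C$ with an edge of $T$ is resolved by the Kauffman bracket, producing two terms with strictly smaller intersection number, so the recursion terminates, and the base case of curves disjoint from $T$ consists of monomials in the triangulation arcs. Extending to general elements by the multiplicativity of the skein product then gives $\operatorname{Sk}_q^o(\Sigma)\subseteq\mathcal{A}_q(\Sigma)$.

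The main obstacle will be the bookkeeping of the powers of $q^{\pm 1/2}$ in both directions. In the first inclusion, the coefficients produced by step (ii) must match exactly the exponents dictated by the mutation of $(\mathbf{x},\Lambda,B)$ at $e_{k}$, which forces a careful convention on framings and local orientations at the marked points. In the reverse inclusion, the resolution recursion must be shown to terminate with non-negative exponents on each mutable $e_{i}$; this is where the universal Laurent phenomenon of $\mathcal{A}_q(\Sigma)$ must be reconciled with the purely topological reduction by bigons, and where the two-marked-point hypothesis is essential, since it guarantees that the bigons used in the resolution can always be chosen so that no mutable arc is removed from the triangulation.
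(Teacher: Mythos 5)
First, a contextual remark: the paper you are annotating does not prove this statement at all; it quotes it as Theorem 9.8 of Muller, so there is no internal proof to compare with, and your proposal has to be measured against Muller's actual argument. Your first inclusion is essentially that argument and is sound in outline: a triangulation gives a compatible quantum seed in $\operatorname{Sk}_q^o(\Sigma)$, the quasi-commutation matrix $\Lambda$ comes from the boundary skein relation at shared endpoints, the flip/Ptolemy exchange relation comes from resolving the single crossing of $e_k$ with its flip $e_k'$ by the Kauffman relation, and induction on mutation sequences then gives $\mathcal{A}_q(\Sigma)\subseteq\operatorname{Sk}_q^o(\Sigma)$.

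The reverse inclusion as you set it up has a genuine gap. The recursion ``resolve each crossing of $C$ with an edge of $T$'' is not an operation on the element $[C]$ of the skein algebra: the Kauffman relation resolves a crossing inside one diagram, so what it computes is the product $[e][C]$, and to recover $[C]$ you would need to invert the interior arc $e$, which is not invertible in $\operatorname{Sk}_q^o(\Sigma)$ (only boundary arcs are localized there). Moreover the claim that the resulting Laurent expansion has nonnegative exponents on all mutable arcs is false: already for a loop in an annulus the expansion of $[C]$ in the quantum torus of a triangulation has the crossed interior edge in the denominator. At best your recursion places $\operatorname{Sk}_q^o(\Sigma)$ inside the quantum torus of each triangulation, i.e.\ inside the (quantum) upper cluster algebra, and passing from there to $\mathcal{A}_q(\Sigma)$ requires the nontrivial equality $\mathcal{A}_q=\mathcal{U}_q$, which you do not address. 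Muller's route to $\operatorname{Sk}_q^o(\Sigma)\subseteq\mathcal{A}_q(\Sigma)$ is different: (i) every simple arc, not only the edges of the chosen $T$, is a quantum cluster variable, because it lies in some triangulation and any two triangulations are connected by flips, which your seed computation identifies with mutations; and (ii) $\operatorname{Sk}_q^o(\Sigma)$ is generated by simple arcs together with the inverted boundary arcs, the delicate point being closed loops, which are handled by producing a product of arcs one of whose resolutions is the loop disjoint from a boundary arc, so that the loop can be solved for after inverting that boundary arc. Step (ii) is exactly where the hypothesis of at least two marked points on each boundary component is used; it is not about ``choosing bigons so that no mutable arc is removed.'' Without (i) and (ii), or a proof of $\mathcal{A}_q=\mathcal{U}_q$ in this setting, your second inclusion does not go through.
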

The above $\operatorname{Sk}_q^o(\Sigma)$ is the localized skein algebra defined by Muller~\cite{Muller}.
The specialization $q=1$ of $\mathcal{A}_q(\Sigma)$ becomes a commutative cluster algebra and $\mathcal{A}_1(\Sigma)=\operatorname{Sk}_1^o(\Sigma)$, see Corollary~{11.5(1)} of Muller~\cite{Muller}.
These results show that the Goldman Lie algebra of $\Sigma$ is related to the cluster algebra through the quantization.

This paper is organized as follows. 
Section~\ref{sec;preliminaries} is preliminary definitions.
Section~\ref{sec;Poisson} and \ref{sec;coPoisson} give the definitions of the Poisson algebra and the co-Poisson coalgebra of the regular homotopy classes of oriented curves on a bordered surface.
We define the Poisson algebra of unoriented curves on a bordered surface and give its quantization in Section~\ref{sec;quantization}.
Section~\ref{sec;Appendix} is a review of definitions of Poisson algebras, co-Poisson coalgebras and bi-Poisson bialgebras.
\paragraph{Acknowledgment}
The author would like to express his gratitude to his adviser, Hisaaki Endo, for his encouragement and helpful discussion and to Nariya Kawazumi for helpful comments. 
The author was supported by JSPS KAKENHI Grant Number 12J01252.

\section{Preliminaries}\label{sec;preliminaries}
This section gives definitions of main targets and notations which we treat in this paper. 

We consider an oriented, compact, connected and smooth surface $S$ with boundary. 
The boundary $\partial S$ contains a finite set $M$ of marked points. 
For each marked point in $M$, we fix an inward normal vector to $\partial S$.
We call the pair $\Sigma=(S,M)$ a \emph{bordered surface}. 
We allow a bordered surface which has boundary components with no marked points. 
We define curves on a bordered surface $\Sigma=(S,M)$.
\begin{DEF}
\ 
\begin{itemize}
\item A \emph{loop} on $\Sigma$ is an oriented immersed loop on $\operatorname{int}S=S\setminus\partial S$ whose self-intersection points consists only of transverse double points. 
The set of loops on $\Sigma$ is denoted by ${\sf Loops}(\Sigma)$. 
\item An \emph{arc} on $\Sigma$ is an oriented immersed arc on $S$ with endpoints in $M$ whose interior is disjoint from $\partial S$. 
Moreover, 
we require that self-intersection points of the arc in $\operatorname{int}S$ are transverse double points and the arc is tangent to inward normal vector on $M$ at the endpoints.
The set of arcs on $\Sigma$ is denoted by ${\sf Arcs}(\Sigma)$. 
\item A \emph{curve} on $\Sigma$ is a loop or an arc on $\Sigma$. 
We denote the set of curves by ${\sf Curves}(\Sigma)={\sf Loops}(\Sigma)\cup{\sf Arcs}(\Sigma)$. 
\item A subset $X$ of ${\sf Curves}(\Sigma)$ is \emph{generic} or \emph{generic curves} if $X$ has only finite transverse double points in $\operatorname{int}S$. 
\end{itemize}
\end{DEF}
\begin{RMK}
Dylan Thurston call generic subsets of ${\sf Curves}(\Sigma)$ curve diagrams on $\Sigma$ and elements of ${\sf Curves}(\Sigma)$ connected curve diagrams in his paper~\cite{ThurstonD}. 
\end{RMK}
\begin{figure}[h]
\centering
\includegraphics[scale=0.5, clip]{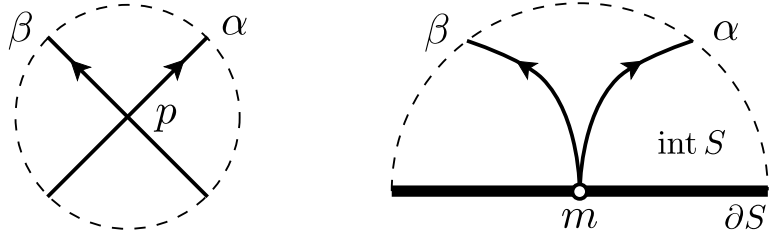}
\caption{intersections of curves}
\label{intersection}
\end{figure}
We consider the following equivalence relation on ${\sf Curves}(\Sigma)$. 
For generic subset $\{\alpha,\beta\}$ of ${\sf Curves}(\Sigma)$, 
the curve $\alpha$ is \emph{regularly homotopic} to $\beta$ 
if there is a finite sequence of generic curves $\{f_1,f_2,\dots,f_n\}\subset{\sf Curves}(\Sigma)$ 
such that $f_{i+1}$ is obtained from $f_{i}$ by ambient isotopy of $S$ fixing a collar of $\partial S$ 
or one of the moves ($\Omega$2-1), ($\Omega$2-2), ($\Omega$3) and ($\Omega$b2-1)--($\Omega$b2-4) illustrated in Figure~\ref{homotopymoves}. 
\begin{figure}[h]
\centering
\includegraphics[scale=0.5, clip]{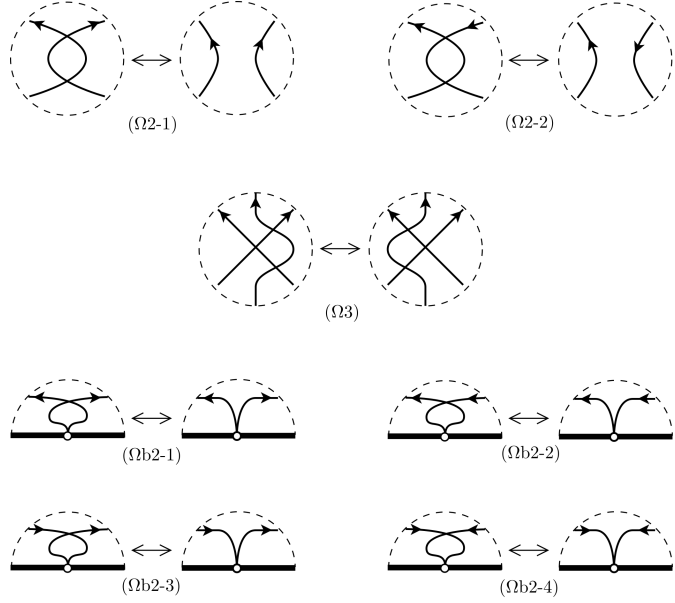}
\caption{local moves of curves}
\label{homotopymoves}
\end{figure}
We denote the set of regular homotopy classes of curves on $\Sigma$ by $\hat{\pi}(\Sigma)^{+}$.

We define the local intersection number of two curves on $\Sigma$. 
Let $\{\alpha,\beta\}\subset{\sf Curves}(\Sigma)$ be generic. 
We define the local intersection number of $\alpha$ and $\beta$ at an intersection point $p$. 
\begin{DEF}[the local intersection number]
If $p$ is in $S\setminus\partial S$, 
\begin{equation*}
\varepsilon(p;\alpha,\beta) =
\begin{cases}
1  & \text{if}\ \vcenter{\hbox{\includegraphics[width=0.8cm]{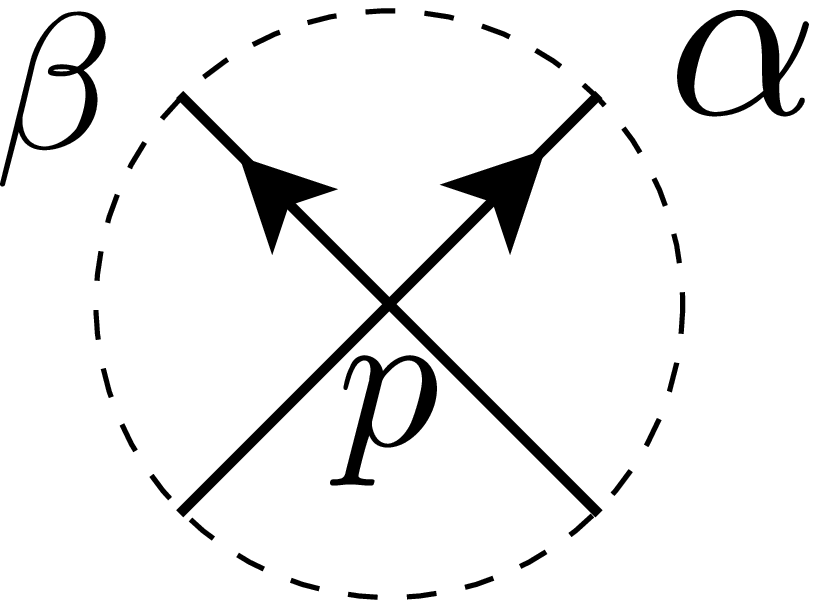}}},\\
-1 & \text{otherwise}.
\end{cases}
\end{equation*}
If $p$ is a marked point in $M$, 
\begin{equation*}
\varepsilon(p;\alpha,\beta) =
\begin{cases}
\frac{1}{2}  & \text{if} \ \vcenter{\hbox{\includegraphics[width=1cm]{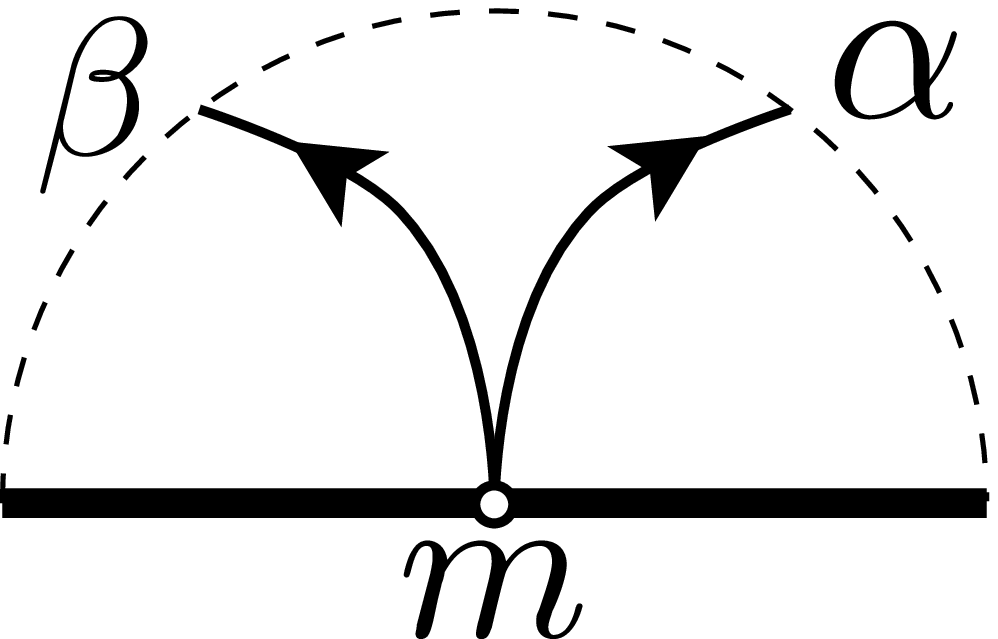}}}, 
\ \vcenter{\hbox{\includegraphics[width=1cm]{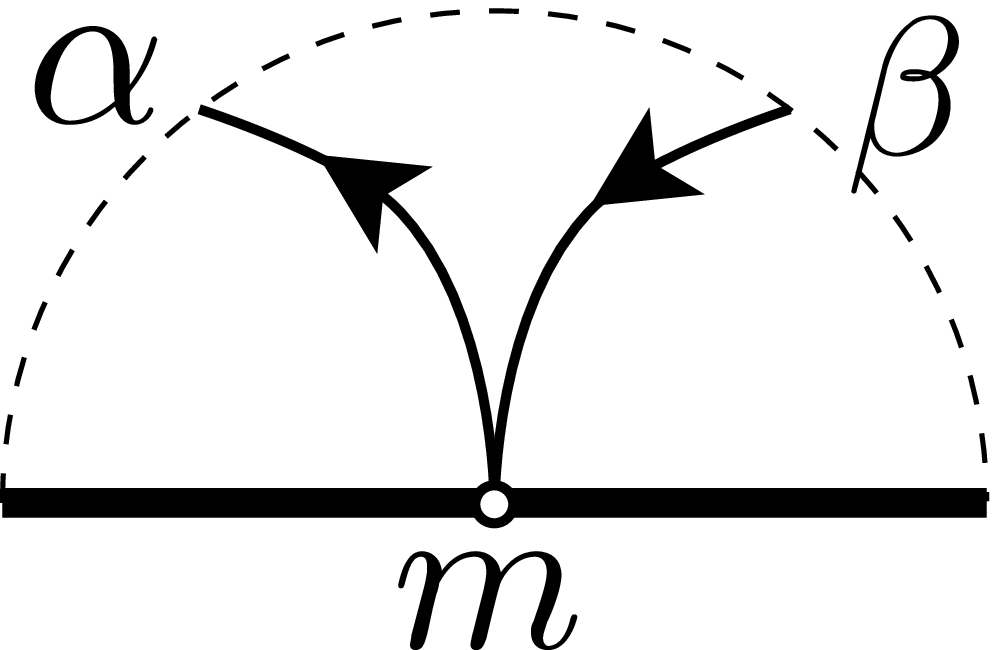}}}, 
\ \vcenter{\hbox{\includegraphics[width=1cm]{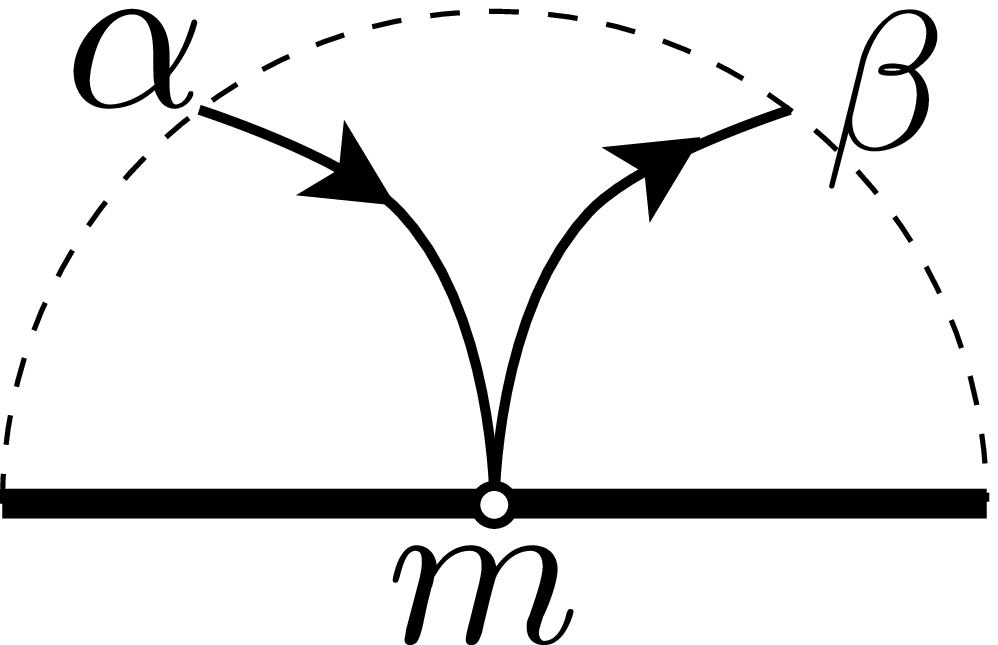}}}, 
\ \vcenter{\hbox{\includegraphics[width=1cm]{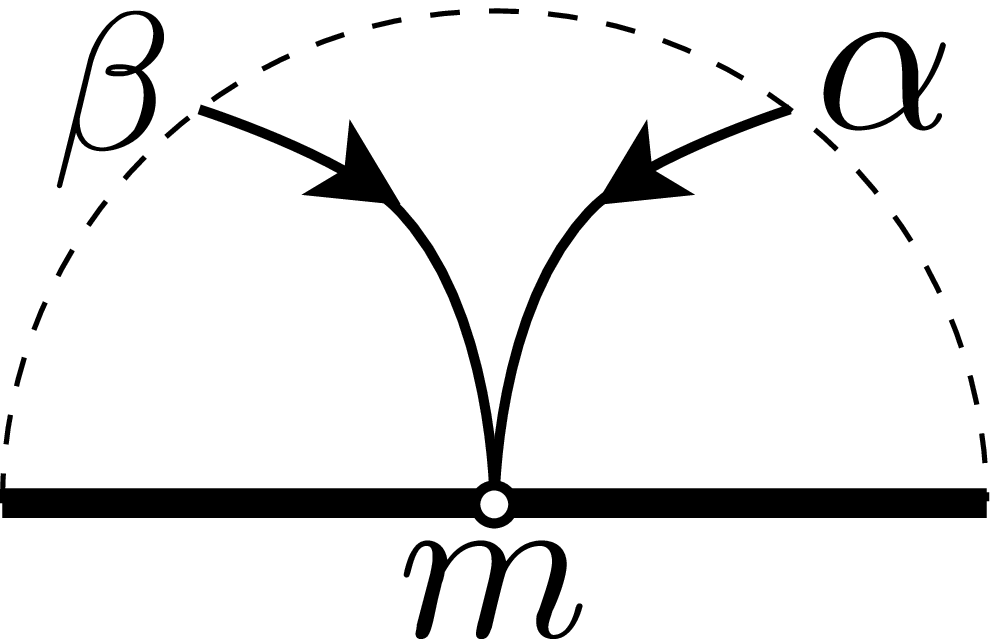}}},\\
-\frac{1}{2} & \text{otherwise.}
\end{cases}
\end{equation*}
\end{DEF}
We next introduce notations which represent subarcs or based subloops of curves on $\Sigma$. 
We take a curve $\alpha$ in ${\sf Curves}(\Sigma)$ and distinct points $p$ and $q$ on $\alpha$. 
\begin{itemize}
\item $\alpha_p^q\colon I=[0,1]\to S$ is obtained along $\alpha$ such that $\alpha_p^q(0)=p$, $\alpha_p^q(1)=q$ and $p,q\notin\alpha_p^q(I\setminus\partial I)$.
\end{itemize}
If $\alpha\in{\sf Loops}(\Sigma)$ and $p$ is not a self-intersection point of $\alpha$, 
then
\begin{itemize}
\item $\alpha_p$ is the based loop at $p$ obtained from $\alpha$.
\end{itemize}
If $p$ is a self-intersection point of $\alpha$, 
then we take two tangent vectors $v_1$ and $v_2$ of $\alpha$ at $p$ such that $(v_1,v_2)$ coincides with orientation of $S$. 
For $i=1,2$, We can obtain the based loop $\alpha_p^{(i)}$ which goes from $p$ in the direction of $v_i$ along $\alpha$ and stops at the first meeting $p$. 
Let $\alpha$ be in ${\sf Arcs}(\Sigma)$ and $p$ a self-intersection point of $\alpha$ in $\operatorname{int}S$, then we can define either $\alpha_p^{(1)}$ or $\alpha_p^{(2)}$. 
We call the self-intersection point $p$ is type(I) (resp. type(II)) if we can define $\alpha_p^{(1)}$ (resp. $\alpha_p^{(2)}$). (See Figure \ref{subloop}) 
For $x,y$ in $M$, we can decompose an arc $\alpha=\alpha_x^y$ on $\Sigma$ to three parts $\alpha_x^p$, $\alpha_p^{(1)}$ (resp. $\alpha_p^{(2)}$) and $\alpha_p^x$ if $p$ is type(I) (resp. type(II)). 
\begin{figure}[h]
\centering
\includegraphics[scale=0.5, clip]{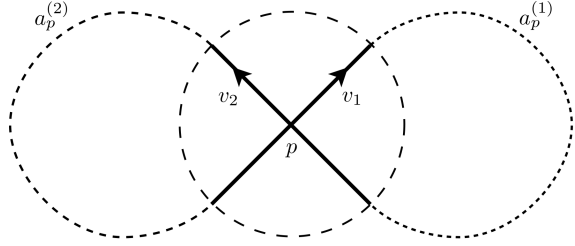}
\caption{subloops $a_p^{(1)}$ and $a_p^{(2)}$}
\label{subloop}
\end{figure}

\section{Poisson algebras of curves on bordered surfaces}\label{sec;Poisson}
We will define a Poisson bracket for two regular homotopy classes of curves on $\Sigma$ in this section. 
Let us denote by $\mathbf{K}$ a commutative associative ring with unit containing the field of rational numbers, $\langle t \rangle$ the infinite cyclic group generated by $t$ and $\mathbf{K}[t,t^{-1}]=\mathbf{K}\langle t\rangle$ the Laurent polynomial ring. 

Let $\GLA{\Sigma}^{+}$ be the free $\mathbf{K}$-module on $\hat{\pi}(\Sigma)^{+}$. 
The Laurent polynomial ring $\mathbf{K}[t,t^{-1}]$ acts on $\GLA{\Sigma}^{+}$ such that the action of $t$ (resp. $t^{-1}$) on a curve on $\Sigma$ is an insertion of a positively (resp. negatively) oriented monogon into interior of the curve. 
Thus $\GLA{\Sigma}^{+}$ is a $\mathbf{K}[t,t^{-1}]$-module. 
Then we define the $\mathbf{K}[t,t^{-1}]$-module $Z(\Sigma)^{+}$ 
as the quotient of $\GLA{\Sigma}^{+}$ by the $\mathbf{K}[t,t^{-1}]$-submodule $C_M^{+}$ generated by monogons with vertices in $M$. 
The monogons are contractible arcs on $\Sigma$ which have no self-intersection points.
We denote by $\left|\alpha\right|$ the element of $Z(\Sigma)$ represented by $\alpha$ in $\mathsf{Curves}(\Sigma)$. 
Let us denote by $\GSA{\Sigma}^{+}$ the symmetric $\mathbf{K}[t,t^{-1}]$-algebra $\operatorname{Sym}(Z(\Sigma)^{+})$.
\begin{RMK}\ 
\begin{enumerate}
\item Let $\hat{\pi}(\Sigma)$ be the set of homotopy classes of curves on $\Sigma$ and $C_M$ the $\mathbf{K}$-submodule of $\GLA{\Sigma}$ generated by contractible arcs on $\Sigma$. 
Then $Z(\Sigma)^{+}/(t-1)Z(\Sigma)^{+}=Z(\Sigma)=\GLA{\Sigma}/C_M$. 
We denote the symmetric $\mathbf{K}$-algebra of $Z(\Sigma)$ by $\GSA{\Sigma}$. 
\item A curve diagram is an generic immersion of a $1$-manifold $(X,\partial X)\to(S,M)$ which defined by Thurston~\cite{ThurstonD}. 
In our own words, the set of curve diagrams is the set of all generic curves on $\Sigma$.
He define the commutative associative monoid $\mathbf{C}(\Sigma)$ of regular homotopy classes of curve diagrams on $\Sigma$ whose unit is the empty diagram. 
The multiplication of two curve diagrams is given by the union of two diagrams. 
A subset $X$ of $\mathsf{Curves}(\Sigma)$ defines an element of $\operatorname{Sym}(\GLA{\Sigma}^{+})$ by multiplication of regular homotopy classes of elements in $X$.
The monoid ring $\mathbf{K}\mathbf{C}(\Sigma)$ is isomorphic to $\operatorname{Sym}(\GLA{\Sigma}^{+})$ through the above map. 
\end{enumerate}
\end{RMK}

We next define a bracket on $\GSA{\Sigma}^{+}$. 
We take curves $a$ and $b$ in ${\sf Loops}(\Sigma)$ and $\gamma=\gamma_x^y$ and $\eta=\eta_z^w$ in ${\sf Arcs}(\Sigma)$ such that $\{a,b,\gamma,\eta\}$ is generic and $x,y,z$ and $w$ are in $M$. 
We define a bracket $\{\cdot,\cdot\}'\colon{\sf Curves}(\Sigma)\times{\sf Curves}(\Sigma)\to \GSA{\Sigma}^{+}$ as the following.
\begin{align}
\{a,b\}'=&\!\sum_{p\in a\cap b}\varepsilon(p;a,b)\left|a_pb_p\right|,\label{ll}\\
\{a,\eta\}'=&\!\sum_{p\in a\cap \eta}\varepsilon(p;a,\eta)\left|\eta_z^p a_p\eta_p^w\right|,\label{la}\\
\{\gamma,b\}'=&\!\sum_{p\in \gamma\cap b}\varepsilon(p;\gamma,b)\left|\gamma_x^p b_p \gamma_p^y\right|,\label{al}\\
\{\gamma,\eta\}'=&\!\sum_{p\in \gamma\cap\eta}\varepsilon(p;\gamma,\eta)\left|\gamma_x^p\eta_p^w\right|\left|\eta_z^p\gamma_p^y\right|\label{aa}\\
=&\!\sum_{p\in \gamma\cap\eta\cap\operatorname{int}S}\varepsilon(p;\gamma,\eta)\left|\gamma_x^p\eta_p^w\right|\left|\eta_z^p\gamma_p^y\right|\nonumber\\
&+\delta_{x,z}\varepsilon(x;\gamma,\eta)\left|\gamma\right|\left|\eta\right|
+\delta_{y,w}\varepsilon(x;\gamma,\eta)\left|\eta\right|\left|\gamma\right|,\nonumber
\end{align}
where $\delta$ is the Kronecker delta function.
We know that (\ref{ll})--(\ref{aa}) are invariant under local moves ($\Omega$2-1) and ($\Omega$3). (See Goldman~\cite{Goldman}.) 
We can also confirm that (\ref{ll})--(\ref{aa}) are invariant under ($\Omega$2-2) 
and (\ref{aa}) is invariant under ($\Omega$b2-1)--($\Omega$b2-4) by comparing terms of the bracket coming from intersection points illustrated in diagrams of local moves.
For example, 
\begin{align*}
\text{the terms coming from}\ \{\alpha,\beta\}'\ \text{of} 
\ \vcenter{\hbox{\includegraphics[width=1cm]{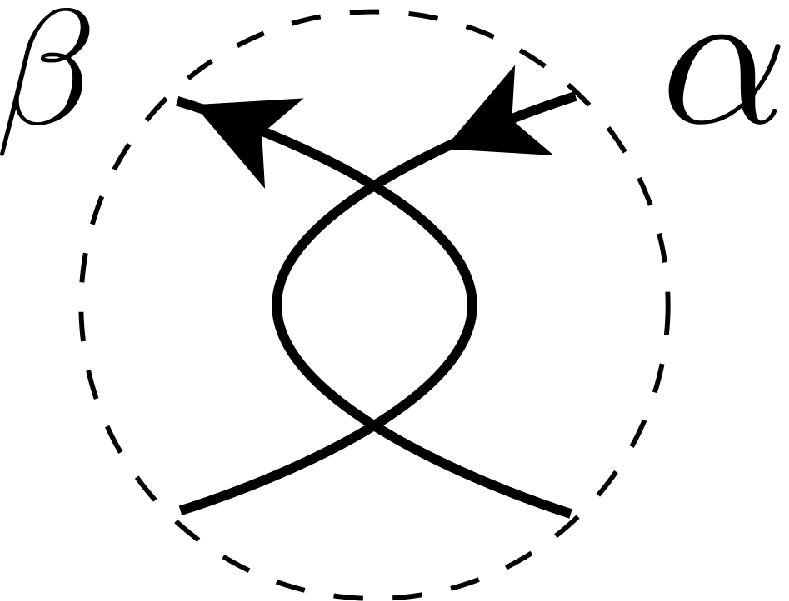}}}
=&\ \vcenter{\hbox{\includegraphics[width=1cm]{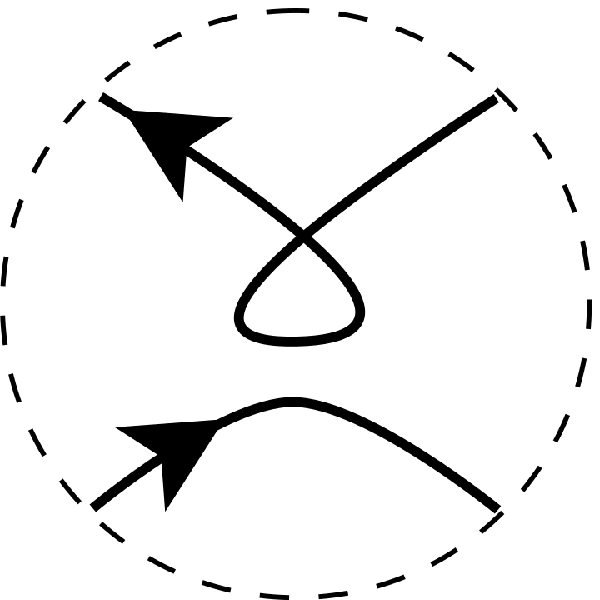}}}
-\ \vcenter{\hbox{\includegraphics[width=1cm]{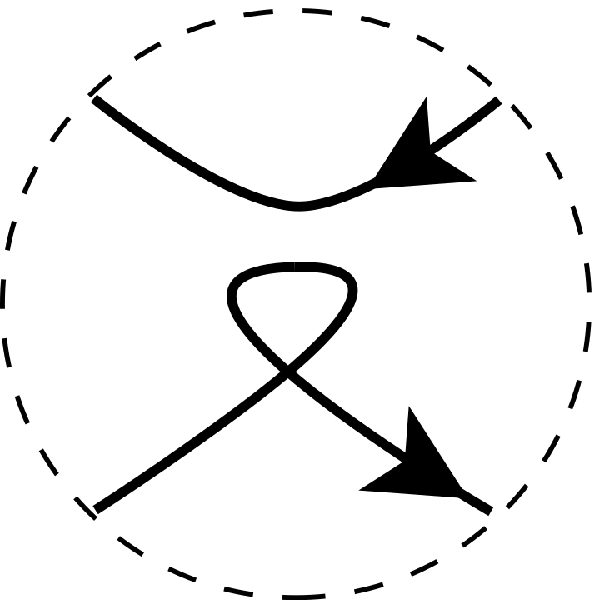}}}\\
=&t\ \vcenter{\hbox{\includegraphics[width=1cm]{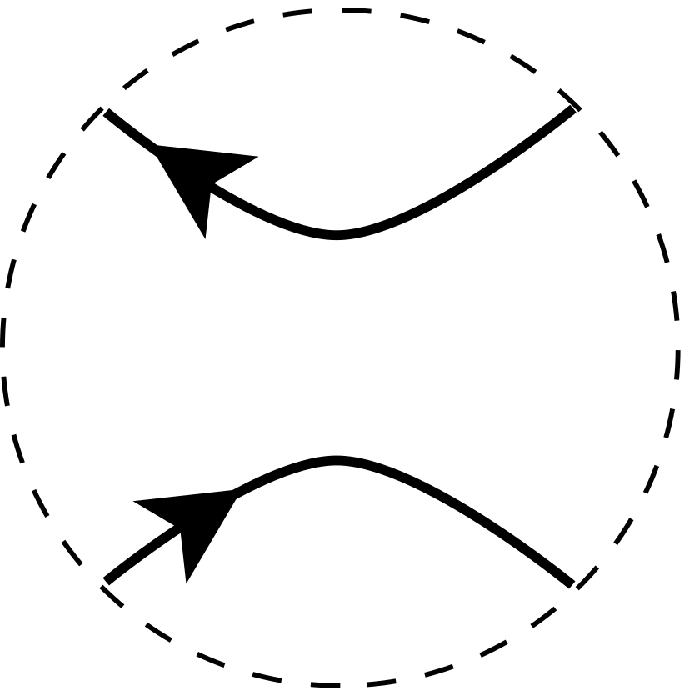}}}
-t\ \vcenter{\hbox{\includegraphics[width=1cm]{invariance4}}}\\
=&0,\\
\text{the terms coming from}\ \{\alpha,\beta\}'\ \text{of} 
\ \vcenter{\hbox{\includegraphics[width=1cm]{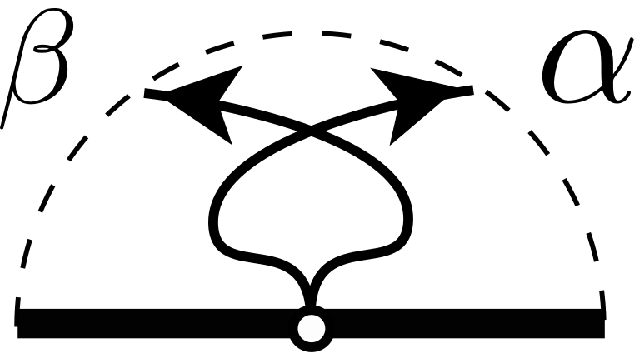}}}
=&\ \vcenter{\hbox{\includegraphics[width=1cm]{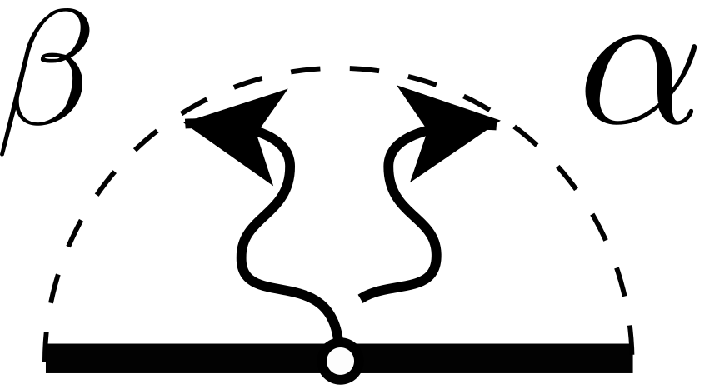}}}
-\frac{1}{2}\ \vcenter{\hbox{\includegraphics[width=1cm]{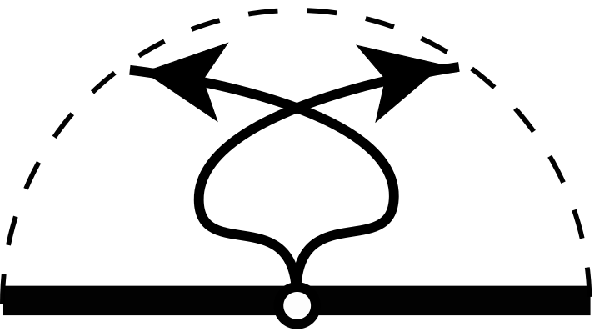}}}\\
=&\frac{1}{2}\ \vcenter{\hbox{\includegraphics[width=1cm]{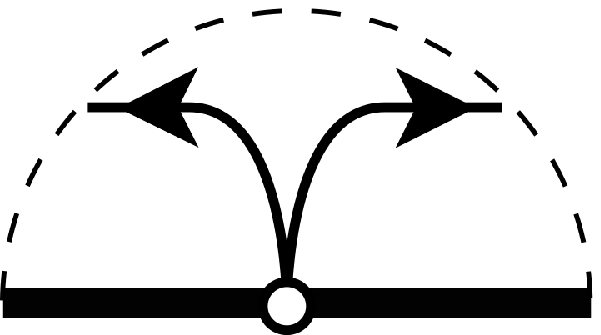}}}\\
=&\ \text{the terms coming from}\ \{\alpha,\beta\}'\ \text{of} 
\ \vcenter{\hbox{\includegraphics[width=1cm]{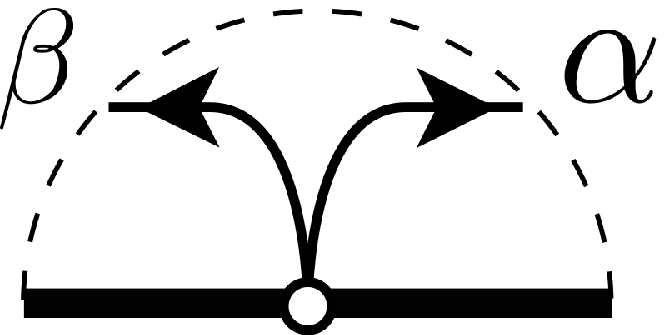}}}.
\end{align*}
When $\gamma$ or $\eta$ is a monogon with vertex in $M$, then the bracket vanishes.
Therefore we can define the $\mathbf{K}[t,t^{-1}]$-bilinear map 
$\nabla'=\{\cdot,\cdot\}'\colon Z(\Sigma)^{+}\otimes_{t} Z(\Sigma)^{+}\to \GSA{\Sigma}^{+}$. 
The symbol $\otimes_t$ denotes the tensor product over $\mathbf{K}[t,t^{-1}]$.
Using the Leibniz rule, 
the bracket is extended to $\nabla=\{\cdot,\cdot\}\colon \GSA{\Sigma}^{+}\otimes_t \GSA{\Sigma}^{+}\to \GSA{\Sigma}^{+}$. 

We can easily prove the following lemma.
\begin{LEM}\label{nabla}\ 
\begin{enumerate}
\item If $\nabla'$ is skew symmetric, then $\nabla$ is skew symmetric.
\item If $\nabla'$ is skew symmetric and satisfies the Jacobi identity, then $\nabla$ satisfies the Jacobi identity.
\end{enumerate}
\begin{proof}
Use the Leibniz rule and an induction on the degree of symmetric power.
\end{proof}
\end{LEM}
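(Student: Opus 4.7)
The plan is to prove both statements by induction on the total degree of the inputs in the symmetric algebra, using the Leibniz rule as the defining recursion for $\nabla$. Specifically, I will regard $\GSA{\Sigma}^{+}=\operatorname{Sym}(Z(\Sigma)^{+})$ as graded by symmetric powers, $\GSA{\Sigma}^{+}=\bigoplus_{n\ge 0}\operatorname{Sym}^{n}(Z(\Sigma)^{+})$, and write $\deg(u)=n$ for $u\in\operatorname{Sym}^{n}(Z(\Sigma)^{+})$. Since the bracket $\nabla$ is $\mathbf{K}[t,t^{-1}]$-bilinear, it is enough to verify each identity on homogeneous products of curve classes, and the Leibniz rule tells us
\[
 \{u_1 u_2,v\}=u_1\{u_2,v\}+\{u_1,v\}u_2,\qquad
 \{u,v_1 v_2\}=\{u,v_1\}v_2+v_1\{u,v_2\},
\]
which lets us push the induction through.

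For (1), I would do a double induction on $(\deg u,\deg v)$. The base case $\deg u=\deg v=1$ is exactly the hypothesis that $\nabla'$ is skew symmetric on $Z(\Sigma)^{+}\otimes_{t}Z(\Sigma)^{+}$. For the inductive step, if $u=u_1 u_2$ with $\deg u_i<\deg u$, then applying Leibniz in the first slot to $\{u,v\}$ and in the second slot to $\{v,u\}$ gives
\[
 \{u_1 u_2,v\}=u_1\{u_2,v\}+\{u_1,v\}u_2,\qquad
 \{v,u_1 u_2\}=\{v,u_1\}u_2+u_1\{v,u_2\},
\]
and the induction hypothesis $\{u_i,v\}=-\{v,u_i\}$ produces $\{u,v\}=-\{v,u\}$. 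The case of higher $\deg v$ is symmetric.

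For (2), I would prove the Jacobi identity $J(u,v,w):=\{\{u,v\},w\}+\{\{v,w\},u\}+\{\{w,u\},v\}=0$ by induction on $\deg u+\deg v+\deg w$. The base case with all three inputs in $Z(\Sigma)^{+}$ is the hypothesis on $\nabla'$. For the inductive step, assume without loss of generality that $u=u_1 u_2$ with $\deg u_i<\deg u$. Expanding each of the three double brackets in $J(u_1 u_2,v,w)$ via Leibniz produces a long list of terms; after collecting them, one recognises that
\[
 J(u_1 u_2,v,w)=u_1\,J(u_2,v,w)+J(u_1,v,w)\,u_2,
\]
the ``cross terms'' of the form $\{u_i,v\}\{u_j,w\}$ cancelling in pairs precisely because $\nabla'$, and hence $\nabla$ on lower-degree inputs, is skew symmetric (this uses part (1)). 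Both $J(u_2,v,w)$ and $J(u_1,v,w)$ vanish by the induction hypothesis, so $J(u_1 u_2,v,w)=0$.

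The main obstacle is the bookkeeping in the Jacobi step: the Leibniz expansion creates many terms, and one must verify that the signs from skew symmetry line up so that the cross terms cancel and only $u_1 J(u_2,v,w)+J(u_1,v,w)u_2$ survives. This is a standard (and purely formal) computation in the construction of the Poisson structure on a symmetric algebra from a Lie bracket on its generating space, so no geometry of $\Sigma$ is needed beyond the hypotheses on $\nabla'$.
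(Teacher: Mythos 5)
Your argument is correct and is exactly the paper's intended proof: the paper only says ``use the Leibniz rule and an induction on the degree of symmetric power,'' and your double induction, with the cross terms $\{u_i,v\}\{u_j,w\}$ cancelling by skew symmetry so that $J(u_1u_2,v,w)=u_1J(u_2,v,w)+J(u_1,v,w)u_2$, is precisely the standard computation being referred to. No gap; you have simply written out the details the paper leaves to the reader.
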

\begin{PROP}\label{Jacobi}
$\nabla'$ is skew symmetric and satisfies the Jacobi identity.
\end{PROP}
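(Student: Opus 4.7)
The plan is to verify skew-symmetry directly from the formulas (\ref{ll})--(\ref{aa}) and then establish the Jacobi identity by adapting Goldman's classical cancellation argument to cover arcs with endpoints in $M$.

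For skew-symmetry I will proceed formula by formula. The basic ingredients are that $\varepsilon(p;\alpha,\beta) = -\varepsilon(p;\beta,\alpha)$ at every intersection point (both interior and at marked points), and that the concatenations appearing in $Z(\Sigma)^{+}$ and in $\GSA{\Sigma}^{+}$ commute in the relevant way: $|a_p b_p| = |b_p a_p|$ for loops because they are cyclically equivalent as based loops, and the two factors in the arc--arc bracket (\ref{aa}) swap freely in the symmetric algebra. For the mixed loop--arc case (\ref{la}) versus (\ref{al}), the concatenated word $\eta_z^p a_p \eta_p^w$ appears in both brackets with opposite signs. Each case reduces to a small local check, and the half-integer coefficients at marked points behave exactly as the integer coefficients under swap.

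For the Jacobi identity I will follow Goldman's strategy. Given three generic curves $\alpha, \beta, \gamma$ in general position, I expand
\[
 J(\alpha,\beta,\gamma) = \{\{\alpha,\beta\}',\gamma\}' + \{\{\beta,\gamma\}',\alpha\}' + \{\{\gamma,\alpha\}',\beta\}'
\]
via the Leibniz rule. Each term is then a sum over pairs $(p,q)$ where $p$ is an intersection of two of the three curves and $q$ is an intersection of the remaining curve with the smoothing produced at $p$. Since smoothing at $p$ re-uses only the two curves through $p$, the point $q$ is itself an intersection of the third curve with one of the original two. Hence every term of $J$ is naturally indexed by an unordered triple of pairwise intersection points, and at each such triple the two contributions that occur produce the same concatenated word (up to cyclic rotation within each loop or arc factor) with opposite signs. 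This is Goldman's original computation applied verbatim to the fully interior triples.

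The main obstacle will be handling intersections at marked points in $M$, where the local intersection number is $\pm 1/2$, where a smoothing can produce a monogon that must be killed by $C_M^+$, and where a single arc can split into two concatenated arcs after resolution and thereby change the combinatorics of the next smoothing. I plan to decompose $J(\alpha,\beta,\gamma)$ into an interior part (handled by Goldman's argument) and a boundary part, and to dispose of the latter by a finite local case analysis at each shared marked point. In particular, the boundary correction terms $\delta_{x,z}\varepsilon(x;\gamma,\eta)|\gamma||\eta|$ and $\delta_{y,w}\varepsilon(y;\gamma,\eta)|\eta||\gamma|$ displayed in the expanded form of (\ref{aa}) must pair up correctly across the three cyclic summands of $J$, and configurations in which three endpoints coincide at one marked point will require the most careful bookkeeping. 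Because $\mathbf{K}$ contains $\mathbb{Q}$ the factors of $1/2$ pose no algebraic difficulty; the real content is the combinatorial matching.
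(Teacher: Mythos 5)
Your strategy is essentially the paper's: skew-symmetry is checked directly from the definitions, and the Jacobi identity is proved by expanding the cyclic sum, cancelling the interior contributions exactly as in Goldman's argument, and then matching the marked-point corrections. In the paper these corrections appear as the Kronecker-delta terms of degree one and degree two in the expansions of $\{\{a,\gamma\}',\eta\}'$, $\{\{\gamma,\eta\}',a\}'$, $\{\{\eta,a\}',\gamma\}'$ and of the all-arcs case, and the proof consists mostly of writing them out and pairing them across the cyclic permutation of $(\gamma,\eta,\zeta)$ --- including precisely the configurations you flag, where three arc endpoints share one marked point (the degree-two terms such as $\delta_{x,z}\delta_{x,u}$). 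So your plan is sound, but be aware that what you defer as ``a finite local case analysis'' is the actual content of the proposition beyond Goldman; as written you assert rather than perform these cancellations, and a complete write-up must carry out that bookkeeping. Two small corrections: the terms of the cyclic sum are indexed by \emph{pairs} of intersection points (the point smoothed in the inner bracket and the point smoothed in the outer one), not by unordered triples, each pair occurring in exactly two of the three cyclic summands with opposite signs; and no cyclic rotation is available inside an arc factor of $\GSA{\Sigma}^{+}$ (arcs are not free loops), though none is needed since the matched words coincide literally after exchanging the roles of the two smoothing points.
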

\begin{proof}
We can see $\nabla'$ is skew symmetric by the definition of the bracket and the local intersection number. 
We prove the Jacobi identity. 
We take curves $a,b$ and $c$ in ${\sf Loops}(\Sigma)$ and $\gamma=\gamma_x^y,\eta=\eta_z^w$ and $\zeta=\zeta_u^v$ in ${\sf Arcs}(\Sigma)$ where $x,y,z,w,u$ and $v$ are in $M$. 
We assume that $\{a,b,c,\gamma,\eta,\zeta\}$ is generic. 
For $a,b$ and $c$, the Jacobi identity is satisfied because the definition of $\nabla'$ for loops coincides with the Goldman bracket. We have to prove three equalities: 
\begin{align}
\{\{a,b\}',\gamma\}'+\{\{b,\gamma\}',a\}'+\{\{\gamma,a\}',b\}'=&0,\label{lla}\\
\{\{a,\gamma\}',\eta\}'+\{\{\gamma,\eta\}',a\}'+\{\{\eta,a\}',\gamma\}'=&0,\label{laa}\\
\{\{\gamma,\eta\}',\zeta\}'+\{\{\eta,\zeta\}',\gamma\}'+\{\{\zeta,\gamma\}',\eta\}'=&0.\label{aaa}
\end{align}
We can prove these equalities by direct calculation. 
We leave the readers the calculation of (\ref{lla}) because it is simpler than (\ref{laa}) and (\ref{aaa}). 
We will calculate (\ref{laa}) and (\ref{aaa}) according to the definition of the bracket using the Kronecker delta function. 
First, we calculate (\ref{laa}).
\begin{align}
\{\{a,\gamma\}',\eta\}'\nonumber\\
=&\!\sum_{p\in a\cap\gamma}\sum_{q\in\gamma_x^p\cap\eta\cap\operatorname{int}S}
\varepsilon(p;a,\gamma)\varepsilon(q;\gamma,\eta)
\left|\gamma_x^q\eta_q^w\right|\left|\eta_z^q\gamma_q^pa_p\gamma_p^y\right|\label{laa1}\\
&+\!\sum_{p\in a\cap\gamma}\sum_{q\in a\cap\eta}
\varepsilon(p;a,\gamma)\varepsilon(q;a,\eta)
\left|\gamma_x^q a_p^q\eta_q^w\right|\left|\eta_z^q a_q^p\gamma_p^y\right|\label{laa2}\\
&+\!\sum_{p\in a\cap\gamma}\sum_{q\in\gamma_p^y\cap\eta\cap\operatorname{int}S}
\varepsilon(p;a,\gamma)\varepsilon(q;\gamma,\eta)
\left|\gamma_x^p a_p\gamma_p^q\eta_q^w\right|\left|\eta_z^q\gamma_q^y\right|\label{laa3}\\
&+(\text{ the terms of degree $1$ of the Kronecker delta function})\tag{$\delta$1},
\end{align}
\begin{align}
\{\{\gamma,\eta\}',a\}'\nonumber\\
=&\sum_{p\in\gamma\cap\eta\cap\operatorname{int}S}\sum_{q\in\gamma_x^p\cap a}
\varepsilon(p;\gamma,\eta)\varepsilon(q;\gamma,a)
\left|\gamma_x^q a_q\gamma_q^p\eta_p^w\right|\left|\eta_z^p\gamma_p^y\right|\label{aal1}\\
&+\sum_{p\in\gamma\cap\eta\cap\operatorname{int}S}\sum_{q\in\eta_p^w\cap a}
\varepsilon(p;\gamma,\eta)\varepsilon(q;\eta,a)
\left|\gamma_x^p\eta_p^q a_q\eta_q^w\right|\left|\eta_z^p\gamma_p^y\right|\label{aal2}\\
&+\sum_{p\in\gamma\cap\eta\cap\operatorname{int}S}\sum_{q\in\eta_z^p\cap a}
\varepsilon(p;\gamma,\eta)\varepsilon(q;\eta,a)
\left|\gamma_x^p\eta_p^w\right|\left|\eta_z^q a_q\eta_q^p\gamma_p^y\right|\label{aal3}\\
&+\sum_{p\in\gamma\cap\eta}\sum_{q\in\gamma_p^y\cap a}
\varepsilon(p;\gamma,\eta)\varepsilon(q;\gamma,a)
\left|\gamma_x^p\eta_p^w\right|\left|\eta_z^p\gamma_p^q a_q\gamma_q^y\right|\label{aal4}\\
&+(\text{ the terms of degree $1$ of the Kronecker delta function})\tag{$\delta$2},
\end{align}
\begin{align}
\{\{\eta,a\}',\gamma\}'\nonumber\\
=&\sum_{p\in\eta\cap a}\sum_{q\in\eta_z^p\cap\gamma\cap\operatorname{int}S}
\varepsilon(p;\eta,a)\varepsilon(q;\eta,\gamma)
\left|\eta_z^q\gamma_q^y\right|\left|\gamma_x^q\eta_q^p a_p\eta_p^w\right|\label{ala1}\\
&+\sum_{p\in\eta\cap a}\sum_{q\in a\cap\gamma}
\varepsilon(p;\eta,a)\varepsilon(q;a,\gamma)
\left|\eta_z^p a_p^q\gamma_q^y\right|\left|\gamma_x^q a_q^p\eta_p^w\right|\label{ala2}\\
&+\sum_{p\in\eta\cap a}\sum_{q\in a\cap\gamma}
\varepsilon(p;\eta,a)\varepsilon(q;a,\gamma)
\left|\eta_z^p a_p^q\gamma_q^y\right|\left|\gamma_x^q a_q^p\eta_p^w\right|\label{ala3}\\
&+(\text{ the terms of degree $1$ of the Kronecker delta function}).\tag{$\delta$3}
\end{align}
The sum of (\ref{laa1}) and (\ref{laa3}) is the sum of subset of curves obtained from $a\cup\gamma\cup\eta$ by smoothing at any pair of intersection points $p\in a\cap\gamma$ and $q\in\gamma\cap\eta$ with the sign $\varepsilon(p;a,\gamma)\varepsilon(q;\gamma,\eta)$. 
We remark that a subset $\{\alpha_1,\alpha_2,\dots,\alpha_n\}$ of curves on $\Sigma$ gives the element $\left|\alpha_1\right|\left|\alpha_2\right|\cdots\left|\alpha_n\right|$ of $\GSA{\Sigma}^+$.
The sum of (\ref{aal1}) and (\ref{aal4}) is also the sum of the curves with the opposite sign $\varepsilon(q;\gamma,\eta)\varepsilon(p;\gamma,a)$. 
Therefore, these sums cancel each other out. 
Similarly, the sum of (\ref{ala1}) and (\ref{ala3}) cancels out the sum of (\ref{aal2}) and (\ref{aal3}). 
Clearly, (\ref{laa2}) is opposite in sign to (\ref{ala2}). It is only necessary to confirm that the terms of degree $1$ of the Kronecker delta function vanishes. 
\begin{align*}
\text{($\delta$1)}
=&\sum_{p\in a\cap\gamma}
\delta_{y,w}\varepsilon(p;a,\gamma)\varepsilon(y;\gamma,\eta)
\left|\gamma_x^p a_p\gamma_p^y\right|\left|\eta\right|\\
&+\sum_{p\in a\cap\gamma}
\delta_{x,z}\varepsilon(p;a,\gamma)\varepsilon(x;\gamma,\eta)
\left|\eta\right|\left|\gamma_x^p a_p\gamma_p^y\right|,
\end{align*}
\begin{align*}
\text{($\delta$2)}
=&\sum_{p\in\gamma\cap a}
\delta_{y,w}\varepsilon(p;\gamma,a)\varepsilon(y;\gamma,\eta)
\left|\gamma_x^p a_p\gamma_p^y\right|\left|\eta\right|\\
&+\sum_{p\in\eta\cap a}
\delta_{y,w}\varepsilon(p;\eta,a)\varepsilon(y;\gamma,\eta)
\left|\gamma\right|\left|\eta_z^p a_p\eta_p^y\right|\\
&+\sum_{p\in\eta\cap a}
\delta_{x,z}\varepsilon(p;\eta,a)\varepsilon(x;\gamma,\eta)
\left|\eta_z^p a_p\eta_p^w\right|\left|\gamma\right|\\
&+\sum_{p\in\gamma\cap a}
\delta_{x,w}\varepsilon(p;\gamma,a)\varepsilon(x;\gamma,\eta)
\left|\eta\right|\left|\gamma_x^p a_p\gamma_p^y\right|,
\end{align*}
\begin{align*}
\text{($\delta$3)}
=&\sum_{p\in\eta\cap a}
\delta_{w,y}\varepsilon(p;\eta,a)\varepsilon(w;\eta,\gamma)
\left|\eta_z^p a_p\eta_p^w\right|\left|\gamma\right|\\
&+\sum_{p\in\eta\cap a}
\delta_{z,x}\varepsilon(p;\eta,a)\varepsilon(z;\eta,\gamma)
\left|\eta_z^p a_p\eta_p^w\right|\left|\gamma\right|.
\end{align*}
From the above, we see that the sum of ($\delta$1) and ($\delta$3) cancels out ($\delta$2). Consequently, the Jacobi identity (\ref{laa}) holds. Finally, we prove (\ref{aaa}). 
\begin{align}
\{\{\gamma,\eta\}'\zeta\}'
=&\sum_{p\in\gamma\cap\eta\cap\operatorname{int}S}\sum_{q\in\gamma_x^p\cap\zeta\cap\operatorname{int}S}
\varepsilon(p;\gamma,\eta)\varepsilon(q;\gamma,\zeta)
\left|\gamma_x^q\zeta_q^v\right|\left|\zeta_u^q\gamma_q^p\eta_p^w\right|\left|\eta_z^p\gamma_p^y\right|\label{aaa1}\\
&+\sum_{p\in\gamma\cap\eta\cap\operatorname{int}S}\sum_{q\in\eta_p^w\cap\zeta\cap\operatorname{int}S}
\varepsilon(p;\gamma,\eta)\varepsilon(q;\eta,\zeta)
\left|\gamma_x^p\eta_p^q\zeta_q^v\right|\left|\zeta_u^q\eta_q^w\right|\left|\eta_z^p\gamma_p^y\right|\label{aaa2}\\
&+\sum_{p\in\gamma\cap\eta\cap\operatorname{int}S}\sum_{q\in\eta_z^p\cap\zeta\cap\operatorname{int}S}
\varepsilon(p;\gamma,\eta)\varepsilon(q;\eta,\zeta)
\left|\gamma_x^p\eta_p^w\right|\left|\eta_z^q\zeta_q^v\right|\left|\zeta_u^q\eta_q^p\gamma_p^y\right|\label{aaa3}\\
&+\sum_{p\in\gamma\cap\eta\cap\operatorname{int}S}\sum_{q\in\gamma_p^y\cap\zeta\cap\operatorname{int}S}
\varepsilon(p;\gamma,\eta)\varepsilon(q;\gamma,\zeta)
\left|\gamma_x^p\eta_p^w\right|\left|\eta_z^p\gamma_p^q\zeta_q^v\right|\left|\zeta_u^q\gamma_q^y\right|\label{aaa4}\\
&+(\text{the terms of degree $1$ of the Kronecker delta function})\tag{$\delta$4}\\
&+(\text{the terms of degree $2$ of the Kronecker delta function})\tag{$\delta\delta$}.
\end{align}
The sum of (\ref{aaa1}) and (\ref{aaa4}) appears in a cyclic permutation of the symbols $(\gamma,\eta,\zeta)$ at the sum of (\ref{aaa2}) and (\ref{aaa3}) with the opposite sign. 
We can confirm that the terms of (\ref{aaa}) which has no Kronecker delta function vanishes in the same way. 
We can calculate remaining parts ($\delta$4) and ($\delta\delta$) as the following.
\begin{align}
\text{($\delta$4)}
=&\sum_{p\in\gamma\cap\zeta\cap\operatorname{int}S}
\delta_{x,z}\varepsilon(x;\gamma,\eta)\varepsilon(p;\gamma,\zeta)
\left|\gamma_x^p\zeta_p^v\right|\left|\zeta_u^p\gamma_p^y\right|\left|\eta\right|
\label{d1}\\
&+\sum_{p\in\eta\cap\zeta\cap\operatorname{int}S}
\delta_{x,z}\varepsilon(x;\gamma,\eta)\varepsilon(p;\eta,\zeta)
\left|\gamma\right|\left|\eta_z^p\zeta_p^v\right|\left|\zeta_u^p\eta_p^w\right|
\label{d2}\\
&+\sum_{p\in\gamma\cap\zeta\cap\operatorname{int}S}
\delta_{y,w}\varepsilon(y;\gamma,\eta)\varepsilon(p;\gamma,\zeta)
\left|\eta\right|\left|\gamma_x^p\zeta_p^v\right|\left|\zeta_u^p\gamma_p^y\right|
\label{d3}\\
&+\sum_{p\in\eta\cap\zeta\cap\operatorname{int}S}
\delta_{y,w}\varepsilon(y;\gamma,\eta)\varepsilon(p;\eta,\zeta)
\left|\eta_z^p\zeta_p^v\right|\left|\zeta_u^p\eta_p^w\right|\left|\gamma\right|
\label{d4}\\
&+\sum_{p\in\gamma\cap\eta\cap\operatorname{int}S}
\delta_{x,u}\varepsilon(x;\gamma,\zeta)\varepsilon(p;\gamma,\eta)
\left|\gamma_x^p\eta_p^w\right|\left|\zeta\right|\left|\eta_z^p\gamma_p^y\right|
\label{d5}\\
&+\sum_{p\in\gamma\cap\eta\cap\operatorname{int}S}
\delta_{w,v}\varepsilon(w;\eta,\zeta)\varepsilon(p;\gamma,\eta)
\left|\zeta\right|\left|\gamma_x^p\eta_p^w\right|\left|\eta_z^p\gamma_p^y\right|
\label{d6}\\
&+\sum_{p\in\gamma\cap\eta\cap\operatorname{int}S}
\delta_{z,u}\varepsilon(z;\eta,\zeta)\varepsilon(p;\gamma,\eta)
\left|\gamma_x^p\eta_p^w\right|\left|\eta_z^p\gamma_p^y\right|\left|\zeta\right|
\label{d7}\\
&+\sum_{p\in\gamma\cap\eta\cap\operatorname{int}S}
\delta_{x,z}\varepsilon(y;\gamma,\zeta)\varepsilon(p;\gamma,\eta)
\left|\gamma_x^p\eta_p^w\right|\left|\zeta\right|\left|\eta_z^p\gamma_p^y\right|.
\label{d8}
\end{align}
The opposite sign of (\ref{d1}) coincides with a cyclic permutation of $(\gamma,\eta,\zeta)$ at (\ref{d7}). 
Similarly, (\ref{d2}), (\ref{d3}) and (\ref{d4}) correspond to (\ref{d5}), (\ref{d6}) and (\ref{d8}), respectively. 
\begin{align}
\text{($\delta\delta$)}
=&\delta_{x,z}\delta_{x,u}\varepsilon(x;\gamma,\eta)\varepsilon(x;\gamma,\zeta)
\left|\gamma\right|\left|\zeta\right|\left|\eta\right|
\label{dd1}\\
&+\delta_{x,z}\delta_{y,v}\varepsilon(x;\gamma,\eta)\varepsilon(y;\gamma,\zeta)
\left|\zeta\right|\left|\gamma\right|\left|\eta\right|
\label{dd2}\\
&+\delta_{x,z}\delta_{z,u}\varepsilon(x;\gamma,\eta)\varepsilon(z;\eta,\zeta)
\left|\gamma\right|\left|\eta\right|\left|\zeta\right|
\label{dd3}\\
&+\delta_{x,z}\delta_{w,v}\varepsilon(x;\gamma,\eta)\varepsilon(w;\eta,\zeta)
\left|\gamma\right|\left|\zeta\right|\left|\eta\right|
\label{dd4}\\
&+\delta_{y,w}\delta_{x,u}\varepsilon(y;\gamma,\eta)\varepsilon(x;\gamma,\zeta)
\left|\eta\right|\left|\gamma\right|\left|\zeta\right|
\label{dd5}\\
&+\delta_{y,w}\delta_{y,v}\varepsilon(y;\gamma,\eta)\varepsilon(y;\gamma,\zeta)
\left|\eta\right|\left|\zeta\right|\left|\gamma\right|
\label{dd6}\\
&+\delta_{y,w}\delta_{z,u}\varepsilon(y;\gamma,\eta)\varepsilon(z;\eta,\zeta)
\left|\eta\right|\left|\zeta\right|\left|\gamma\right|
\label{dd7}\\
&+\delta_{y,w}\delta_{w,v}\varepsilon(y;\gamma,\eta)\varepsilon(w;\eta,\zeta)
\left|\zeta\right|\left|\eta\right|\left|\gamma\right|.
\label{dd8}
\end{align}
In a similar way, (\ref{dd1}), (\ref{dd2}), (\ref{dd4}) and (\ref{dd6}) correspond to (\ref{dd3}), (\ref{dd7}), (\ref{dd5}) and (\ref{dd8}), respectively. 
We finish proving the Jacobi identity (\ref{aaa}). 
\end{proof}
Lemma~\ref{nabla} and Proposition~\ref{Jacobi} tell us that the pair $(\GSA{\Sigma}^{+}, \nabla)$ is a Poisson algebra. 
We denote it by $\GPA{\Sigma}^{+}$. 
When we put $t=1$, we can obtain the Poisson structure on $\GSA{\Sigma}$. 
We denote it by $\GPA{\Sigma}$.
\begin{RMK}\ 
\begin{enumerate}
\item If $M$ is the empty set, then the Poisson algebra $\GPA{(S,M)}$ is the Goldman Lie algebra of $S$. 
\item Kawazumi-Kuno\cite{Kawazumi-Kuno} defined an action of the Goldman Lie algebra of $S$ on the module generated by the homotopy set of ${\sf Arcs}(\Sigma)$. We can consider the action as the inner derivatoin of $\GPA{\Sigma}$. 
\item Labourie\cite{Labourie2} defined the swapping algebra $\mathcal{L}_k(M)$ where $M$ is a set of points of the boundary of a disk $\mathbb{D}=\{z\in\mathbb{C}\mid \left|z\right|\leq 1\}$ and $k$ any real number. $\GPA{(\mathbb{D},M)}$ coincides with the swapping algebra $\mathcal{L}_0(M)$. 
\end{enumerate}
\end{RMK}

\section{co-Poisson coalgebras of curves on bordered surfaces}\label{sec;coPoisson}
 Let $C_{\mathcal{O}}^{+}$ be the $\mathbf{K}[t,t^{-1}]$-submodule of $\GLA{\Sigma}^{+}$ generated by a trivial loop with no self-intersection. 
The submodule $C_{\mathcal{O}}^{+}$ is uniquely determined although we have two choices of such a trivial loop, the positive or negative curl. 
We denote the quotient $\mathbf{K}[t,t^{-1}]$-module $\GLA{\Sigma}^{+}/(C_{M}^{+}+C_{\mathcal{O}}^{+})$ by $Z(\Sigma)_0^{+}$, 
the quotient map ${\sf Curves}(\Sigma)\to Z(\Sigma)_0^{+}$ by $\left\|\ \cdot\ \right\|$.
The symmetric $\mathbf{K}[t,t^{-1}]$-algebra 
$\GSA{\Sigma}_0^{+}=\operatorname{Sym}(Z(\Sigma)_0^{+})$ has a commutative and associative multiplication $m$ and a cocommutative and coassosiative comultiplication $\Delta$ which define a bialgebra structure on it. 

In this section, we define a cobracket of $\GSA{\Sigma}_0^{+}$ which is a generalization of Turaev cobracket \cite{Turaev1,Turaev2}. 
We show that the cobracket and the bialgebra structure on $\GSA{\Sigma}_0^{+}$ are compatible. 

First, 
we define a cobracket on $\GSA{\Sigma}_0^{+}$. 
For any curve $\alpha$ on $\Sigma$, 
we denote the set of all self-intersection points of $\alpha$ by ${\pitchfork\alpha}$. 
If $\alpha$ is in ${\sf Arcs}(\Sigma)$, 
then ${\pitchfork\alpha}$ decomposes to two subsets type(I) and type(II).
We denote the subset of type(I) (resp. type(II)) self-intersection points by ${\pitchfork_\text{(I)}\alpha}$ (resp. ${\pitchfork_\text{(II)}\alpha}$).
Let $a$ in ${\sf Loops}(\Sigma)$ and $\gamma$ in ${\sf Arcs}(\Sigma)$ be generic. 
We define $\delta'\colon{\sf Curves}(\Sigma)\to\GSA{\Sigma}_0^{+}\otimes_t\GSA{\Sigma}_0^{+}$ as the following.
\begin{align}
\delta'(a)
=&\!\sum_{p\in \pitchfork a}\left\|a_p^{(1)}\right\|\otimes_t \left\|a_p^{(2)}\right\|
-\left\|a_p^{(2)}\right\|\otimes_t\left\|a_p^{(1)}\right\|,\\
\delta'(\gamma)
=&\!\sum_{p\in\pitchfork_{\text{(I)}}\gamma}\left\|\gamma_p^{(1)}\right\|\otimes_t\left\|\gamma_x^p\gamma_p^y\right\|
-\left\|\gamma_x^p\gamma_p^y\right\|\otimes_t\left\|\gamma_p^{(1)}\right\|\\
&+\!\sum_{p\in\pitchfork_{\text{(II)}}\gamma}\left\|\gamma_x^p\gamma_p^y\right\|\otimes_t\left\|\gamma_p^{(2)}\right\|
-\left\|\gamma_p^{(2)}\right\|\otimes_t\left\|\gamma_x^p\gamma_p^y\right\|\nonumber\\
=&\!\sum_{p\in\pitchfork_{\text{(I)}}\gamma\cap\operatorname{int}S}\left\|\gamma_p^{(1)}\right\|\otimes_t\left\|\gamma_x^p\gamma_p^y\right\|
-\left\|\gamma_x^p\gamma_p^y\right\|\otimes_t\left\|\gamma_p^{(1)}\right\|\nonumber\\
&+\!\sum_{p\in\pitchfork_{\text{(II)}}\gamma\cap\operatorname{int}S}\left\|\gamma_x^p\gamma_p^y\right\|\otimes_t\left\|\gamma_p^{(2)}\right\|
-\left\|\gamma_p^{(2)}\right\|\otimes_t\left\|\gamma_x^p\gamma_p^y\right\|.\nonumber
\end{align}
We can confirm that $\delta'$ defines the same element in $\GSA{\Sigma}_0^{+}\otimes_t\GSA{\Sigma}_0^{+}$ for local moves ($\Omega$2-1), ($\Omega$2-2), ($\Omega$3), ($\Omega$b2-2) and ($\Omega$b2-3). 
For example, 
\begin{align*}
\text{the terms coming from}\ \delta'(\alpha)\ \text{of} 
\ \vcenter{\hbox{\includegraphics[width=1cm]{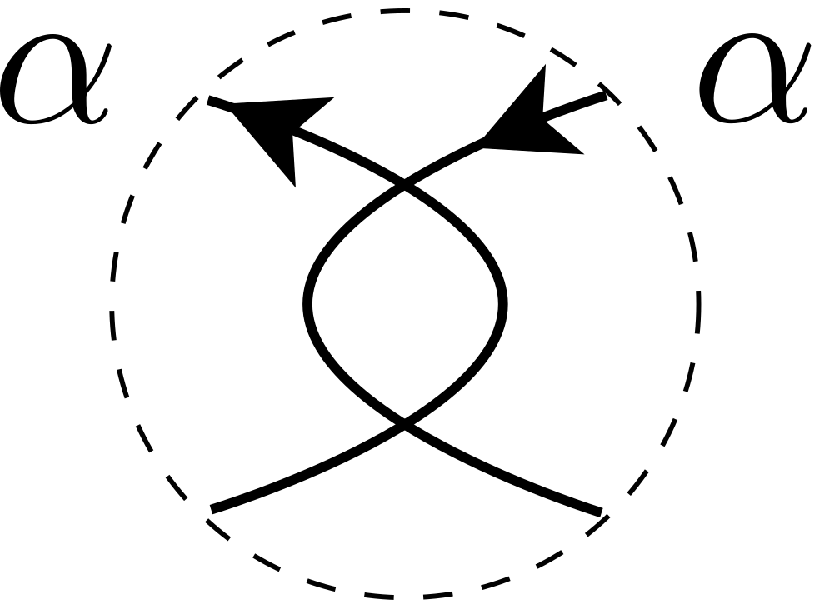}}}
=&\ \vcenter{\hbox{\includegraphics[width=1cm]{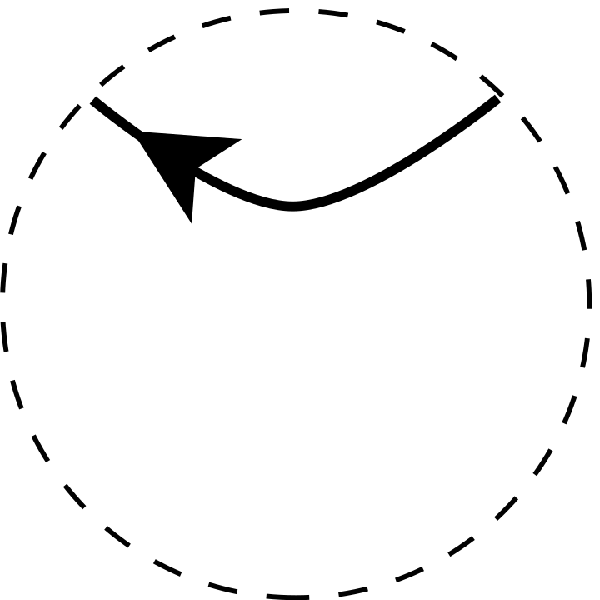}}}
\otimes_t\ \vcenter{\hbox{\includegraphics[width=1cm]{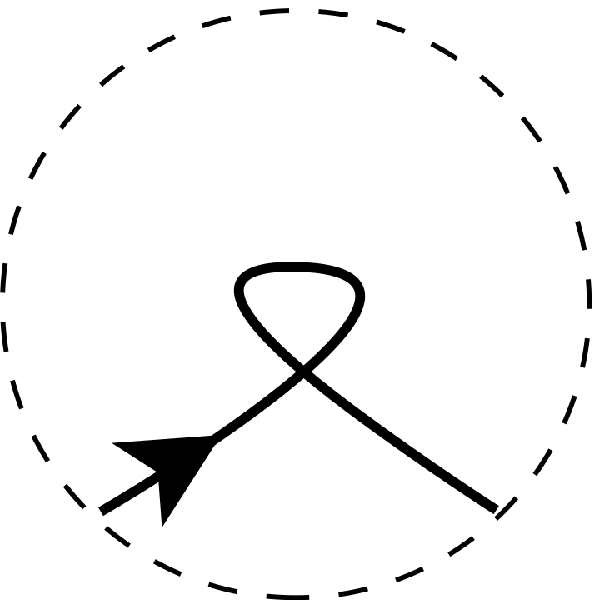}}}
-\ \vcenter{\hbox{\includegraphics[width=1cm]{coinvariance3}}}
\otimes_t\ \vcenter{\hbox{\includegraphics[width=1cm]{coinvariance2}}}\\
&+\ \vcenter{\hbox{\includegraphics[width=1cm]{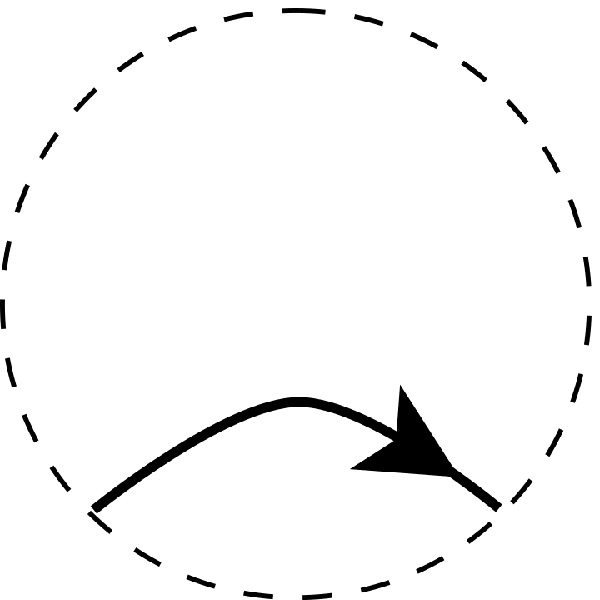}}}
\otimes_t\ \vcenter{\hbox{\includegraphics[width=1cm]{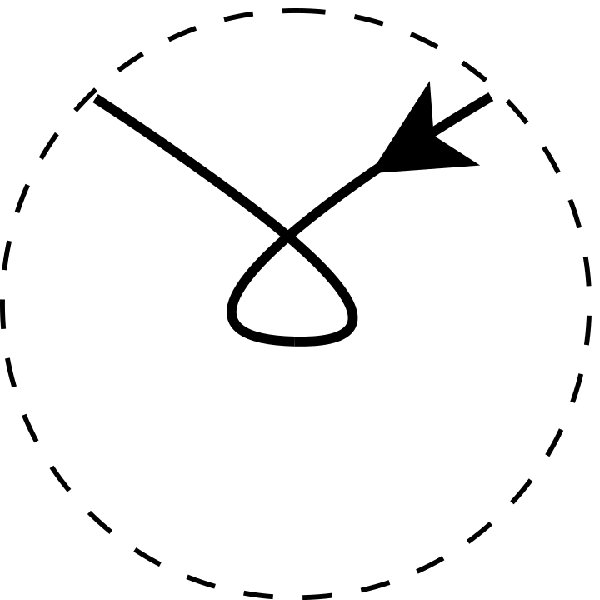}}}
-\ \vcenter{\hbox{\includegraphics[width=1cm]{coinvariance3180}}}
\otimes_t\ \vcenter{\hbox{\includegraphics[width=1cm]{coinvariance2180}}}\\
=&\ \vcenter{\hbox{\includegraphics[width=1cm]{coinvariance2}}}
\otimes_t t\ \vcenter{\hbox{\includegraphics[width=1cm]{coinvariance2180}}}
-t\ \vcenter{\hbox{\includegraphics[width=1cm]{coinvariance2180}}}
\otimes_t\ \vcenter{\hbox{\includegraphics[width=1cm]{coinvariance2}}}\\
&+\ \vcenter{\hbox{\includegraphics[width=1cm]{coinvariance2180}}}
\otimes_t t\ \vcenter{\hbox{\includegraphics[width=1cm]{coinvariance2}}}
-t\ \vcenter{\hbox{\includegraphics[width=1cm]{coinvariance2}}}
\otimes_t\ \vcenter{\hbox{\includegraphics[width=1cm]{coinvariance2180}}}\\
=&0.
\end{align*}
We do not need to calculate local moves ($\Omega$b2-1) and ($\Omega$b2-4) because connected arcs have no such self-intersections. 
If $\alpha$ is a curl or a monogon with vertex in $M$, then the value $\delta'(\alpha)$ vanishes.
Therefore the map $\delta'$ extends to $\delta\colon Z(\Sigma)^{+}\to\GSA{\Sigma}_0^{+}\otimes_t\GSA{\Sigma}_0^{+}$ and the extended map $\delta$ is a $\mathbf{K}[t,t^{-1}]$-linear map. 
Furthermore, we extend it to the $\mathbf{K}[t,t^{-1}]$-linear map on $\GSA{\Sigma}^{+}$ by using the $(m,\delta)$-compatibility. (See Appendix.)
Consequently, we obtain the $\mathbf{K}[t,t^{-1}]$-linear map $\delta\colon\GSA{\Sigma}_0^{+}\to\GSA{\Sigma}_0^{+}\otimes_t\GSA{\Sigma}_0^{+}$. 
\begin{LEM}\label{delta}\ 
\begin{enumerate}
\item If $\delta'$ is co-skew symmetric, 
then $\delta$ is co-skew symmetric.
\item If $\delta'$ satisfies the co-Leibniz rule, 
then $\delta$ satisfies the co-Leibniz rule.
\item If $\delta'$ is co-skew symmetric and satisfies the co-Leibniz rule and the co-Jacobi identity, 
then $\delta$ satisfies the co-Jacobi identity.
\end{enumerate}
\begin{proof}
Use the $(m,\delta)$-compatibility and an induction on the degree of symmetric power.
\end{proof}
\end{LEM}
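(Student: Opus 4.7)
The plan is to establish all three claims by induction on the total degree in the symmetric algebra $\GSA{\Sigma}_0^{+}=\operatorname{Sym}(Z(\Sigma)_0^{+})$, with the base case (degree one) supplied by the hypothesis on $\delta'$, and the inductive step executed by unwinding the $(m,\delta)$-compatibility that defines the extension. Recall that this compatibility takes the form $\delta(XY)=\delta(X)\cdot\Delta(Y)+\Delta(X)\cdot\delta(Y)$ in $\GSA{\Sigma}_0^{+}\otimes_t\GSA{\Sigma}_0^{+}$, where the product on the right is the componentwise product in $\GSA{\Sigma}_0^{+}\otimes_t\GSA{\Sigma}_0^{+}$. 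Hence verifying any tensorial identity on a degree-$n$ element $XY$ reduces, via this formula, to a formal manipulation whose ingredients $\delta(X)$, $\delta(Y)$, $\Delta(X)$, $\Delta(Y)$ live in strictly lower degrees for the inputs of $\delta$.

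For (1), let $\tau$ denote the flip on $\GSA{\Sigma}_0^{+}\otimes_t\GSA{\Sigma}_0^{+}$. Since $\Delta$ is cocommutative, $\tau\circ\Delta=\Delta$, so applying $\tau$ to the compatibility formula yields $\tau\delta(XY)=\tau\delta(X)\cdot\Delta(Y)+\Delta(X)\cdot\tau\delta(Y)$. By induction, both $\tau\delta(X)=-\delta(X)$ and $\tau\delta(Y)=-\delta(Y)$, which immediately gives $\tau\delta(XY)=-\delta(XY)$.

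For (2), one writes out $(\Delta\otimes\operatorname{id})\circ\delta(XY)$ by expanding $\delta(XY)$ through the compatibility and then applying $\Delta\otimes\operatorname{id}$; because $\Delta$ is an algebra homomorphism on $\GSA{\Sigma}_0^{+}$, the result reorganizes into the shape dictated by the co-Leibniz rule, with the genuinely nontrivial pieces $(\Delta\otimes\operatorname{id})\delta(X)$ and $(\Delta\otimes\operatorname{id})\delta(Y)$ handled by the inductive hypothesis. The same strategy handles the second summand $(\operatorname{id}\otimes\delta)\circ\Delta$ on the other side, so matching the two expansions term by term yields the co-Leibniz identity at degree $n$.

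For (3), the goal is $(1+\sigma+\sigma^{2})(\delta\otimes\operatorname{id})\delta(XY)=0$, where $\sigma$ cyclically permutes the three tensor factors. Applying the compatibility twice decomposes $(\delta\otimes\operatorname{id})\delta(XY)$ into contributions where both $\delta$'s act on $X$, both on $Y$, or one on each, each dressed by factors of $\Delta(X)$ and $\Delta(Y)$. Cyclic symmetrization kills the pure-$X$ and pure-$Y$ contributions by the inductive co-Jacobi identity, while the genuinely mixed terms cancel in pairs; achieving this cancellation requires first converting certain $(\delta\otimes\operatorname{id})\delta$ patterns into $(\operatorname{id}\otimes\delta)\Delta$ patterns via the co-Leibniz rule from (2), and then using the co-skew symmetry from (1) to reposition tensor factors into a common form before pairing. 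This last combinatorial matching is the main obstacle: the number of mixed terms proliferates under $(1+\sigma+\sigma^{2})$, and one must be systematic in tracking which instance of cocommutativity, co-skew symmetry, or co-Leibniz underlies each cancellation. Once the bookkeeping is set up cleanly, however, the cancellations are forced by the three hypotheses on $\delta'$ together with the standard identities for $\Delta$ on the symmetric algebra.
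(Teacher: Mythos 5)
Your proposal is correct and follows essentially the same route as the paper, whose proof is exactly the one-line prescription you elaborate: induction on the degree of the symmetric power with the inductive step driven by the $(m,\delta)$-compatibility $\delta(XY)=\delta(X)\cdot\Delta(Y)+\Delta(X)\cdot\delta(Y)$, cocommutativity of $\Delta$, and (for the co-Jacobi identity) the already-established co-skew symmetry and co-Leibniz rule. The only caveats are cosmetic: you write the co-Leibniz rule with $\Delta$ on the left factor and the co-Jacobi symmetrization applied to $(\delta\otimes\mathrm{id})\circ\delta$, whereas the paper uses $(\mathrm{id}\otimes\Delta)\circ\delta$ and $(\mathrm{id}\otimes\delta)\circ\delta$; these agree up to co-skew symmetry and a change of convention.
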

\begin{PROP}\label{co-Jacobi}
$\delta'$ is co-skew symmetric and satisfies the co-Jacobi identity.
\end{PROP}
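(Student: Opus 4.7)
The plan is to parallel the proof of Proposition~\ref{Jacobi} on the co-side, handling first loops (where the argument recovers Turaev's) and then arcs (where the new ingredient is bookkeeping for type(I) and type(II) self-intersections).

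Co-skew symmetry is immediate from inspection of the definition: in each of the three sums defining $\delta'(a)$ and $\delta'(\gamma)$, every pair of tensor factors appears together with its transpose and the opposite sign, so $\delta' + \tau\circ\delta' = 0$ where $\tau$ is the flip of tensor factors. Moreover, one checks that the monogon and curl relations imposed to pass from $\GLA{\Sigma}^{+}$ to $Z(\Sigma)_0^{+}$ kill precisely the terms where one factor of a summand is a curl or a boundary monogon, so the expression descends to $\GSA{\Sigma}_0^{+}\otimes_t\GSA{\Sigma}_0^{+}$ as required.

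For the co-Jacobi identity $(1 + \sigma + \sigma^{2})(\delta'\otimes\mathrm{id})\delta' = 0$, where $\sigma$ denotes the cyclic permutation of the three tensor factors, I would compute $(\delta'\otimes\mathrm{id})\delta'(\alpha)$ for a single curve $\alpha$ (a loop or an arc) as a double sum over ordered pairs $(p,q)$ of self-intersection points, organized by which of the three pieces produced by the first smoothing at $p$ contains the second smoothing point $q$. This is the co-version of the double smoothing performed in the proof of Proposition~\ref{Jacobi}: for loops, each unordered pair $\{p,q\}\subset\pitchfork\alpha$ yields six ordered terms (three orderings in the double sum, times the swaps built into $\delta'$), and they split into two groups of three that match the two ways of performing the smoothings. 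Each group is then annihilated by the cyclic sum, for exactly the reason that Turaev gave for closed loops.

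For arcs, the same combinatorial scheme works, but now each $p\in\pitchfork\gamma$ must be classified as type(I) or type(II), and likewise for $q$; hence one has a $2\times 2$ case analysis for the local picture of each ordered pair. I would organize the terms by the four cases (I,I), (I,II), (II,I), (II,II) and verify that within each case the same cancellation as in the loop case occurs, using only that the three cyclic orderings of a triple of tensor factors cancel and that type changes commute with smoothing in the expected way. The sole intersections that require genuine new checking are the ones at boundary-marked points, but these either produce curls or $C_M^{+}$-monogons (killed in $Z(\Sigma)_0^{+}$) or contribute in balanced pairs that cancel exactly as in the proof of Proposition~\ref{Jacobi}; together with the invariance verifications for $\delta'$ under ($\Omega$b2-2) and ($\Omega$b2-3) already in the text, this finishes the reduction.

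The main obstacle will be the combinatorial bookkeeping for arcs in the case where both $p$ and $q$ lie on $\gamma$ and one of them is of type(II) while the other is type(I): one has to keep track of the ordering along the arc and check that the formulas $\gamma_x^{p}\gamma_p^{y}$, $\gamma_p^{(1)}$ and $\gamma_p^{(2)}$ recombine correctly when both $p$ and $q$ are smoothed. Once this is carefully tabulated, the cancellation argument is formally identical to the loop case. After these checks, Lemma~\ref{delta} together with co-skew symmetry upgrades the identity from $\delta'$ to the symmetric-algebra extension $\delta$, completing the proof.
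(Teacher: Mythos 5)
Your plan is correct and takes essentially the same route as the paper: co-skew symmetry read off directly from the definition, reduction to Turaev's cobracket when the curve is a loop, and for an arc a direct expansion of the double sum over ordered pairs of self-intersection points organized by type (I)/(II) and by which smoothed piece carries the second point, followed by cancellation under the cyclic permutation and the upgrade to $\delta$ via Lemma~\ref{delta}. The paper's proof performs exactly this bookkeeping, using the decomposition $\pitchfork_{\text{(I)}}\gamma_x^p\gamma_p^y=({\pitchfork_{\text{(I)}}\gamma_x^p})\cup({\pitchfork_{\text{(I)}}\gamma_p^y})\cup(\gamma_x^p\cap_{\text{(I)}}\gamma_p^y)$ to recombine the terms you describe.
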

\begin{proof}
It is clear that the map $\delta'$ is co-skew symmetric by definition.
We directly calculate $(\tau^2+\tau+\textup{id}^{\otimes_t 3})\circ(\textup{id}\otimes_t\delta')\circ\delta'(\alpha)$ to prove the co-Jacobi identity.
If $\alpha$ is a loop on $\Sigma$, 
then the map $\delta'$ coincides with the Turaev cobracket.
We have to confirm the co-Jacobi identity when $\alpha$ is an arc on $\Sigma$.

Let $\gamma=\gamma_x^y$ is an arc on $\Sigma$ with endpoints $x$ and $y$ in $M$.
Then, 
\begin{align}
&(\textup{id}\otimes_t\delta')\circ\delta'(\gamma_x^y)\nonumber\\
\begin{split}
&\quad=\sum_{p\in{\pitchfork_{\text{(I)}}\gamma}}\sum_{q\in{\pitchfork_{\text{(I)}}\gamma_x^p\gamma_p^y}}
\Big(\left\|\gamma_p^{(1)}\right\|
\otimes_t\left\|(\gamma_x^p\gamma_p^y)_q^{(1)}\right\|
\otimes_t\left\|(\gamma_x^p\gamma_p^y)_x^q(\gamma_x^p\gamma_p^y)_q^y\right\|\\
&\qquad\qquad-\left\|\gamma_p^{(1)}\right\|
\otimes_t\left\|(\gamma_x^p\gamma_p^y)_x^q(\gamma_x^p\gamma_p^y)_q^y\right\|
\otimes_t\left\|(\gamma_x^p\gamma_p^y)_q^{(1)}\right\|\Big)
\end{split}\label{coJac11}\\
\begin{split}
&\quad+\sum_{p\in{\pitchfork_{\text{(I)}}\gamma}}\sum_{q\in{\pitchfork_{\text{(II)}}\gamma_x^p\gamma_p^y}}
\Big(\left\|\gamma_p^{(1)}\right\|
\otimes_t\left\|(\gamma_x^p\gamma_p^y)_x^q(\gamma_x^p\gamma_p^y)_q^y\right\|
\otimes_t\left\|\gamma_p^{(2)}\right\|\\
&\qquad\qquad-\left\|\gamma_p^{(1)}\right\|
\otimes_t\left\|\gamma_p^{(2)}\right\|
\otimes_t\left\|(\gamma_x^p\gamma_p^y)_x^q(\gamma_x^p\gamma_p^y)_q^y\right\|\Big)
\end{split}\label{coJac12}\\
\begin{split}
&\quad-\sum_{p\in{\pitchfork_{\text{(I)}}\gamma}}\sum_{q\in{\pitchfork\gamma_p^{(1)}}}
\Big(\left\|\gamma_x^p\gamma_p^y\right\|
\otimes_t\left\|(\gamma_p^{(1)})_q^{(1)}\right\|
\otimes_t\left\|(\gamma_p^{(1)})_q^{2}\right\|\\
&\qquad\qquad-\left\|\gamma_x^p\gamma_p^y\right\|
\otimes_t\left\|(\gamma_p^{(1)})_q^{2}\right\|
\otimes_t\left\|(\gamma_p^{(1)})_q^{(1)}\right\|\Big)
\end{split}\label{coJac10}\\
\begin{split}
&\quad+\sum_{p\in{\pitchfork_{\text{(II)}}\gamma}}\sum_{q\in{\pitchfork\gamma_p^{(2)}}}
\Big(\left\|\gamma_x^p\gamma_p^y\right\|
\otimes_t\left\|(\gamma_p^{(2)})_q^{(1)}\right\|
\otimes_t\left\|(\gamma_p^{(2)})_q^{2}\right\|\\
&\qquad\qquad-\left\|\gamma_x^p\gamma_p^y\right\|
\otimes_t\left\|(\gamma_p^{(2)})_q^{2}\right\|
\otimes_t\left\|(\gamma_p^{(2)})_q^{(1)}\right\|\Big)
\end{split}\label{coJac20}\\
\begin{split}
&\quad-\sum_{p\in{\pitchfork_{\text{(II)}}\gamma}}\sum_{q\in{\pitchfork_{\text{(I)}}\gamma_x^p\gamma_p^y}}
\Big(\left\|\gamma_p^{(2)}\right\|
\otimes_t\left\|(\gamma_x^p\gamma_p^y)_q^{(1)}\right\|
\otimes_t\left\|(\gamma_x^p\gamma_p^y)_x^q(\gamma_x^p\gamma_p^y)_q^y\right\|\\
&\qquad\qquad-\left\|\gamma_p^{(1)}\right\|
\otimes_t\left\|(\gamma_x^p\gamma_p^y)_x^q(\gamma_x^p\gamma_p^y)_q^y\right\|
\otimes_t\left\|(\gamma_x^p\gamma_p^y)_q^{(1)}\right\|\Big)
\end{split}\label{coJac21}\\
\begin{split}
&\quad-\sum_{p\in{\pitchfork_{\text{(II)}}\gamma}}\sum_{q\in{\pitchfork_{\text{(II)}}\gamma_x^p\gamma_p^y}}
\Big(\left\|\gamma_p^{(2)}\right\|
\otimes_t\left\|(\gamma_x^p\gamma_p^y)_x^q(\gamma_x^p\gamma_p^y)_q^y\right\|
\otimes_t\left\|\gamma_p^{(2)}\right\|\\
&\qquad\qquad-\left\|\gamma_p^{(1)}\right\|
\otimes_t\left\|\gamma_p^{(2)}\right\|
\otimes_t\left\|(\gamma_x^p\gamma_p^y)_x^q(\gamma_x^p\gamma_p^y)_q^y\right\|\Big)
\end{split}\label{coJac22}
\end{align}
where ${\pitchfork\gamma_p^{(i)}}$ in (\ref{coJac10}) and (\ref{coJac20}) is the set of self-intersection points of the free loop obtained by $\gamma_p^{(i)}$ for $i=1,2$.
Setting 
$\gamma_x^p\cap_\text{(I)}\gamma_p^y={\pitchfork_\text{(I)}\gamma_x^y}\cap\gamma_x^p\cap\gamma_p^y\setminus\{p\}$, 
we can obtain the following decomposition
$\pitchfork_\text{(I)}\gamma_x^p\gamma_p^y
=({\pitchfork_\text{(I)}\gamma_x^p})\cup({\pitchfork_\text{(I)}\gamma_p^y})\cup(\gamma_x^p\cap_\text{(I)}\gamma_p^y)$.
By using the decomposition,
\begin{align*}
\begin{split}
(\ref{coJac11})&=\sum_{p\in{\pitchfork_{\text{(I)}}\gamma}}\sum_{q\in{\pitchfork_{\text{(I)}}\gamma_x^p}}
\Big(\left\|\gamma_p^{(1)}\right\|
\otimes_t\left\|\gamma_q^{(1)}\right\|
\otimes_t\left\|\gamma_x^q\gamma_q^p\gamma_p^y\right\|
-\left\|\gamma_p^{(1)}\right\|
\otimes_t\left\|\gamma_x^q\gamma_q^p\gamma_p^y\right\|
\otimes_t\left\|\gamma_q^{(1)}\right\|\Big)
\end{split}\\
\begin{split}
&\quad+\sum_{p\in{\pitchfork_{\text{(I)}}\gamma}}\sum_{q\in{\pitchfork_{\text{(I)}}\gamma_p^y}}
\Big(\left\|\gamma_p^{(1)}\right\|
\otimes_t\left\|\gamma_q^{(1)}\right\|
\otimes_t\left\|\gamma_x^p\gamma_p^q\gamma_q^y\right\|
-\left\|\gamma_p^{(1)}\right\|
\otimes_t\left\|\gamma_x^p\gamma_p^q\gamma_q^y\right\|
\otimes_t\left\|\gamma_q^{(1)}\right\|\Big)
\end{split}\\
\begin{split}
&\quad+\sum_{p\in{\pitchfork_{\text{(I)}}\gamma}}\sum_{q\in{\gamma_x^p\cap_\text{(I)}\gamma_p^y}}
\Big(\left\|\gamma_p^{(1)}\right\|
\otimes_t\left\|\gamma_q^{(1)}\right\|
\otimes_t\left\|\gamma_q^p\gamma_p^q\right\|
-\left\|\gamma_x^q\gamma_q^y\right\|
\otimes_t\left\|\gamma_x^q\gamma_q^y\right\|
\otimes_t\left\|\gamma_q^p\gamma_p^q\right\|\Big).
\end{split}
\end{align*}
We calculate (\ref{coJac12})--(\ref{coJac22}) in the same way and we can confirm the co-Jacobi identity.
\end{proof}
By Lemma~\ref{delta} and Proposition~\ref{co-Jacobi}, the pair $(\GSA{\Sigma}_0^{+},\delta)$ is a co-Poisson coalgebra. 
We denote it by $\GCPA{\Sigma}^{+}$. 
\begin{RMK}
We can define the co-Poisson coalgebra $\GCPA{\Sigma}$ by substituting $t=1$.
To explain more precisely, 
we denote by $C_{\mathcal{O}}$ the $\mathbf{K}$-submodule of $\GLA{\Sigma}$ generated by the trivial loop.
The quotient $Z(\Sigma)_0=\GLA{\Sigma}/(C_M+C_{\mathcal{O}})$ is $Z(\Sigma)_0^{+}/(t-1)Z(\Sigma)_0^{+}$. 
Then we can define the cobracket on the symmetric $\mathbf{K}$-algebra $\GSA{\Sigma}_0=\operatorname{Sum}(Z(\Sigma)_0)$ which induced from the above $\delta$. 
We denote the co-Poisson coalgebra by $\GCPA{\Sigma}$.
\end{RMK}
\begin{RMK}
The bracket on $\GPA{\Sigma}^{+}$ induces a Poisson structure on $\GSA{\Sigma}_0^{+}$. 
Although $\GSA{\Sigma}_0^{+}$ has the bracket and the cobracket, 
they do not satisfy the compatibility conditions of a bi-Poisson bialgebra.
To be precise, the $(\nabla,\Delta)$-compatibility is not satisfied for two arcs on $\Sigma$.
We show a counter-example in Figure~\ref{counterex}.
\end{RMK}
\begin{figure}[h]
\centering
\includegraphics[scale=0.5, clip]{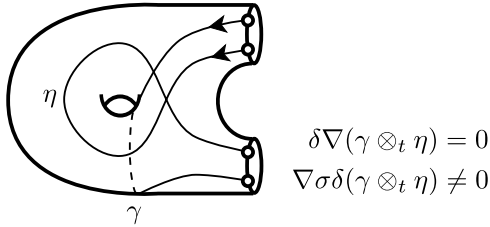}
\caption{a counter-example of the $(\nabla,\Delta)$-compatibility}
\label{counterex}
\end{figure}

\section{Poisson algebras of unoriented curves and its quantization}\label{sec;quantization}
Let $\Sigma=(S,M)$ be a bordered surface. 
We define unoriented curves on $\Sigma$ by identifying oriented curves on $\Sigma$ and its inverse. 
We denote the set of unoriented curves, loops and arcs 
by $\overline{{\sf Curves}}(\Sigma)$, $\overline{{\sf Loops}}(\Sigma)$ and $\overline{{\sf Arcs}}(\Sigma)$ respectively. 
We denote the set of regular homotopy classes of unoriented curves on $\Sigma$ 
by $\bar{\pi}(\Sigma)^{+}$. 
Let $G=\langle\ T\mid T^2=1\ \rangle$ be the cyclic group of order $2$, 
$\mathbf{K}G$ the group ring and $\UGLA{\Sigma}^{+}$ the free $\mathbf{K}G$-module with basis $\bar{\pi}(\Sigma)^{+}$. 
The submodule $\bar{C}_T$ of $\UGLA{\Sigma}^{+}$ is generated by the elements $T\left|\alpha\right|-\left|\alpha'\right|$ for any $\alpha$ in $\overline{{\sf Curves}}(\Sigma)$ 
where $\alpha'$ is a curve obtained by inserting a monogon into interior of $\alpha$. 
We remark that we have two choices of $\alpha'$ because $\alpha$ is two-sided and these elements in $\bar{\pi}(\Sigma)^{+}$ may be different. 
The submodule $\bar{C}_M^{+}$ of $\UGLA{\Sigma}^{+}$ is generated by the unoriented monogons with vertices in $M$.
Then the $\mathbf{K}G$-module $\bar{Z}(\Sigma)^{+}$ is the quotient of $\UGLA{\Sigma}^{+}$ 
by $\bar{C}_T+\bar{C}_M^{+}$ and $\UGPA{\Sigma}^{+}=\operatorname{Sym}(\bar{Z}(\Sigma)^{+})$.
We use the same symbol $\left|\ \cdot\ \right|$ 
for representing the quotient map $\overline{{\sf Curves}}(\Sigma)\to\bar{Z}(\Sigma)^{+}$. 
For any subset $X=\{a_1,a_2,\dots,a_n\}$ of $\overline{{\sf Curves}}(\Sigma)$, 
$\left|X\right|$ denotes the element $\left|a_1\right|\left|a_2\right|\cdots\left|a_n\right|$ of $\UGPA{\Sigma}^{+}$.

Let $\{a_1,a_2,\dots,a_n\}$ be a generic subset of $\overline{{\sf Curves}}(\Sigma)$. 
We define local moves of $a_1\cup a_2\cup\dots\cup a_n$ for $n$-tuple of generic unoriented curves $(a_1,a_2,\dots,a_n)$. 
Let $p$ be an intersection point of $a_i$ and $a_j$ where $1\leq i < j\leq n$. 
A neighborhood of $p$ has two subarcs $a_i'$ of $a_i$ and $a_j'$ of $a_j$.
We give an orientation of the subarcs 
such that the pair of tangent vectors at $p$ obtained from $(a_i',a_j')$ coincides with the orientation of the surface $S$. 
$\mathsf{E}_p(a_1,a_2,\dots,a_n)$ denotes unoriented curves obtained by smoothing the crossing at $p$ under the given orientation. 
We define $\mathsf{e}_p(a_1,a_2,\dots,a_n)$ as the unoriented curves obtained by exchanging $a_i$ and $a_j$ of $\mathsf{E}_p(a_1,a_2,\dots,a_n)$. 
We illustrate the two eliminations $\mathsf{E}_p$ and $\mathsf{e}_p$ at the intersection point $p$ in Figure~\ref{elimination}. 
\begin{figure}[h]
\centering
\includegraphics[scale=0.5, clip]{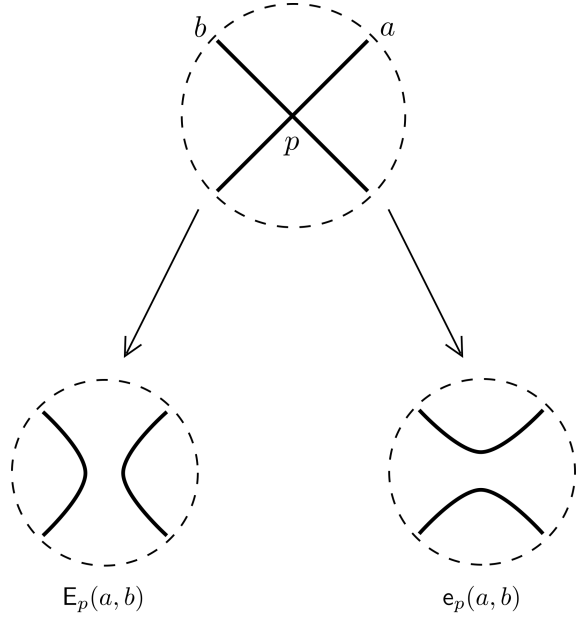}
\caption{elimination of crossing at $p$}
\label{elimination}
\end{figure}
Let $p_1,p_2,\dots,p_m$ be intersection points of distinct two curves in $\bigcup_{1\leq i\leq n} a_i$. 
Let us denote by $\mathsf{X}_{p_1}\mathsf{X}_{p_2}\cdots\mathsf{X}_{p_m}(a_1,a_2,\dots,a_n)$ unoriented curves obtained 
by performing the eliminations $\mathsf{X}_{p_i}$ under the above rule 
where the symbol $\mathsf{X}$ is $\mathsf{E}$ or $\mathsf{e}$. 

For any $a$ and $b$ in $\overline{{\sf Curves}}(\Sigma)$, 
We define a bracket on $\UGSA{\Sigma}^{+}$ by 
\begin{align}\label{unoriPoisson}
\begin{split}
 \{a,b\}
=&\sum_{p\in a\cap b \cap \operatorname{int}S}\left|\mathsf{E}_p(a,b)\right|-\left|\mathsf{e}_p(a,b)\right|\\
&\qquad+\frac{1}{2}(n_{+}(a,b)-n_{-}(a,b))\left|a\right|\left|b\right|.
\end{split}
\end{align}
In the above, $n_{+}(a,b)$ (resp. $n_{-}(a,b)$) is the number of pairs of endpoints of $a$ and $b$ 
such that the pairs of endpoints lie in the same marked point $m$ in $M$ 
and $a$ is locally located on the right (resp. left) side of $b$ in the neighborhood of $m$. 
We can confirm that the value of the bracket is invariant under the unoriented local moves ($\bar{\Omega}$1)--($\bar{\Omega}$3) and ($\bar{\Omega}$b2) illustrated in Figure~\ref{unorihomotopymoves}.
\begin{figure}[h]
\centering
\includegraphics[scale=0.5, clip]{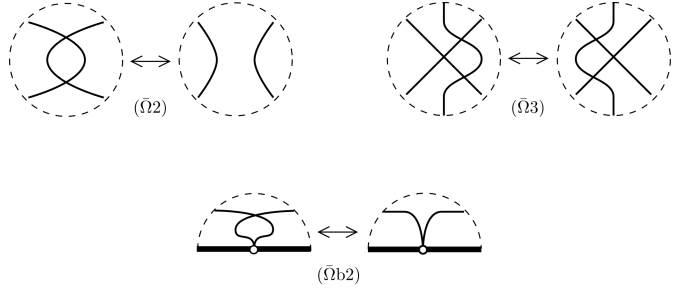}
\caption{the local moves of unoriented curves}
\label{unorihomotopymoves}
\end{figure}
Thus the bracket is defined on $\hat{\pi}(\Sigma)^{+}$ and we extend it $\mathbf{K}G$-bilinearly. 
We obtain $\{\cdot,\cdot\}\colon\UGLA{\Sigma}\otimes_G\UGLA{\Sigma}\to\UGSA{\Sigma}^{+}$ 
where the symbol $\otimes_G$ denotes the tensor product over $\mathbf{K}G$. 
If $a$ or $b$ is a monogon with vertex in $M$, then the bracket vanishes. 
We can also confirm that the bracket vanishes on 
$\UGLA{\Sigma}\otimes_G\bar{C}_T+\bar{C}_T\otimes_G\UGLA{\Sigma}$
because the bracket preserves monogons in the curves. 
Therefore, we can define the bracket on $\bar{Z}(\Sigma)^{+}$ and extend to $\UGSA{\Sigma}^{+}$ by the Leibniz rule. 

We can easily show the following lemma by definition. 
\begin{LEM}\label{unorilem}
Let $\{a_1,a_2,\dots,a_n\}$ and $\{a,b\}$ be generic subsets of $\overline{{\sf Curves}}(\Sigma)$. 
\begin{enumerate}
\item $\mathsf{E}_p(a_1,a_2,\dots,a_i,\dots,a_j,\dots,a_n)=\mathsf{e}_p(a_1,a_2,\dots,a_j,\dots,a_i,\dots,a_n)$ for an intersection point $p$ of $a_i$ and $a_j$.
\item $n_{+}(a,b)=n_{-}(a,b)$.
\end{enumerate}
\end{LEM}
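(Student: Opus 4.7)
Both parts are pure unpacking of the definitions — which is why the author advertises the lemma as easy — but (2) hides a small combinatorial check that deserves some care.

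For (1), I would just chase the definition of $\mathsf{E}_p$. At an intersection $p\in a_i\cap a_j$, the recipe is to take short subarcs $a_i'\subset a_i$ and $a_j'\subset a_j$, orient them so that the pair of tangent vectors $(a_i',a_j')$ at $p$ agrees with the orientation of $S$, and then smooth the resulting transverse crossing along those orientations; the opposite smoothing is $\mathsf{e}_p$. If we now swap $a_i$ and $a_j$ in the tuple, the rule applied to the new tuple orients the pair $(a_j',a_i')$ to match the orientation of $S$. But $(v,w)$ and $(w,v)$ cannot both be positively oriented bases at $p$, so this choice forces the underlying oriented strands $(a_i',a_j')$ to carry the opposite orientations from before. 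The smoothing it produces therefore coincides with the \emph{other} smoothing of the original oriented pair, i.e.\ $\mathsf{e}_p(a_1,\dots,a_i,\dots,a_j,\dots,a_n)$. This is (1).

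For (2), I would fix a marked point $m\in M$ at which both $a$ and $b$ have endpoints and treat each such $m$ separately, since both $n_\pm(a,b)$ are local counts at marked points. At $m$ all strands of $a$ and $b$ are tangent to the same inward normal, so a small perturbation orders them linearly; the count of pairs with ``$a$ on the right of $b$'' versus ``$a$ on the left of $b$'' is determined by this local ordering. The equality $n_+(a,b)=n_-(a,b)$ should then be produced by an explicit involution on the set of pairs $(e_a,e_b)$ at $m$ that swaps right and left — concretely, using that the curves are unoriented, one may reverse the local role of the two strands to pair each right-sided configuration with a left-sided one.

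The main obstacle — really the only content — is carrying out this pairing at $m$ in a way that is compatible with endpoint multiplicities: a curve may have both endpoints at the same $m$, and one needs to keep track of how $a$-endpoints interleave with $b$-endpoints in the tangent stack. I would handle this by a small case check on the number of endpoints of $a$ and of $b$ at $m$, each time exhibiting the involution directly; the result then extends by summing over all marked points. Part (1) has no substantive obstacle.
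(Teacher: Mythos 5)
Your treatment of (1) is fine: it is exactly the definitional unpacking intended here (the paper itself offers nothing beyond ``by definition''), the only nitpick being that requiring $(a_j',a_i')$ to be positively oriented reverses exactly one of the two local orientations, not both; the conclusion that the orientation-compatible smoothing becomes the other resolution of the crossing is correct.

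For (2), however, your plan does not work, and the reason is worth spelling out. The left/right relation between the strand of $a$ and the strand of $b$ at a marked point $m$ is fixed by the orientation of $S$ and the chosen inward normal at $m$; it does not depend on any orientation of the curves. So ``using that the curves are unoriented'' to ``reverse the local role of the two strands'' does not exchange right and left for a given pair $(e_a,e_b)$ --- what exchanging the roles of the strands proves is $n_{+}(a,b)=n_{-}(b,a)$, not $n_{+}(a,b)=n_{-}(a,b)$. Indeed the literal equality you set out to prove fails: if $a$ and $b$ each have exactly one endpoint at a common marked point $m$ (e.g.\ a disk with marked points $m,m_1,m_2$, with $a$ joining $m$ to $m_1$ and $b$ joining $m$ to $m_2$), there is exactly one pair at $m$, counted in exactly one of $n_{\pm}(a,b)$, so $n_{+}(a,b)+n_{-}(a,b)=1$. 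If $n_{+}(a,b)=n_{-}(a,b)$ held in general, the boundary term $\frac{1}{2}(n_{+}(a,b)-n_{-}(a,b))\left|a\right|\left|b\right|$ of the bracket would vanish identically, which contradicts how the quantity $N(a,b)=n_{+}(a,b)-n_{-}(a,b)$ is actually used later: the proof of Proposition~\ref{unoriLie} cancels the boundary contributions ``by using $N(a,b)=-N(b,a)$'', and the computation in Theorem~\ref{quantization} produces a nonzero $N$ for arcs sharing a marked endpoint, matching the $q$-commutation of such arcs in Muller's skein algebra. So the statement you should be proving (and the one the skew-symmetry argument needs) is the symmetry $n_{+}(a,b)=n_{-}(b,a)$, equivalently $N(a,b)=-N(b,a)$, which is immediate since ``$a$ lies locally to the right of $b$ at $m$'' is by definition the same configuration as ``$b$ lies locally to the left of $a$ at $m$''. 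Finally, note that even on its own terms your proposal defers its only substantive step (the ``small case check'' exhibiting the involution), so part (2) is not actually established as written.
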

\begin{PROP}\label{unoriLie}
The pair $(\UGSA{\Sigma}^{+},\{\cdot,\cdot\})$ is a Poisson algebra. 
\end{PROP}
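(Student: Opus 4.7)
The plan is to verify skew-symmetry, the Jacobi identity, and the Leibniz rule. The Leibniz rule is built in, since the bracket on $\UGSA{\Sigma}^{+}$ is defined as the Leibniz extension of its restriction to generators. By an argument identical to Lemma~\ref{nabla}, both skew-symmetry and the Jacobi identity for the extended bracket will follow from the same properties for the restriction to a pair, respectively a triple, of generic unoriented curves.

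Skew-symmetry on generators reduces immediately to Lemma~\ref{unorilem}. For each interior intersection point $p$ of $a$ and $b$, part (1) of that lemma supplies $\mathsf{E}_p(a,b)=\mathsf{e}_p(b,a)$ and $\mathsf{e}_p(a,b)=\mathsf{E}_p(b,a)$, so the interior sum in (\ref{unoriPoisson}) reverses sign under $a\leftrightarrow b$. The boundary correction also reverses sign, either directly from $n_\pm(b,a)=n_\mp(a,b)$ or vacuously via Lemma~\ref{unorilem}(2). So $\{b,a\}=-\{a,b\}$ for a pair of generic curves, and hence in general.

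For the Jacobi identity I would do a direct calculation on three generic unoriented curves $a_1,a_2,a_3$, closely modeled on the proof of Proposition~\ref{Jacobi}. Expanding $\{\{a_i,a_j\},a_k\}$ via (\ref{unoriPoisson}) produces a double sum indexed by a first intersection $p$ and a second intersection $q$ on the smoothed configuration. The cyclic sum $\{\{a_1,a_2\},a_3\}+\{\{a_2,a_3\},a_1\}+\{\{a_3,a_1\},a_2\}$ then decomposes according to whether $p$ and $q$ lie in $\operatorname{int}S$ or at marked points, giving interior--interior, mixed, and boundary--boundary pieces. For the interior--interior terms I plan to exhibit each unordered pair of intersection points among the three curves as contributing twice in the cyclic sum, once via $\mathsf{E}$-type and once via $\mathsf{e}$-type eliminations (matched by Lemma~\ref{unorilem}(1)), with opposite signs that cancel, exactly as the interior sums (\ref{aaa1})--(\ref{aaa4}) cancel in the oriented proof.

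The hard part will be the boundary contributions. The correction term $\tfrac{1}{2}(n_+-n_-)|a||b|$ injects degree-one delta contributions into the iterated bracket, which must pair off against the genuine boundary--boundary terms arising when the three arcs share endpoints in $M$. I expect these cancellations to follow the $(\delta)$ and $(\delta\delta)$ bookkeeping of Proposition~\ref{Jacobi} almost verbatim, after replacing the oriented signs $\varepsilon$ by the cyclically symmetric counts $n_\pm$; the delicate point is the case analysis when endpoints of more than two distinct arcs coincide at a single marked point, since unoriented strands there can be permuted without changing the underlying configuration. No genuinely new ingredient beyond the oriented case is needed, but this combinatorial bookkeeping is where errors are most likely.
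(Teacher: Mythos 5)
Your outline follows the paper's proof in its main lines: the Leibniz rule is automatic from the extension, skew-symmetry on generators is exactly what Lemma~\ref{unorilem} is invoked for, and the Jacobi identity is verified by a direct expansion of the cyclic sum on three generic curves, with the interior--interior double sums cancelling across cyclic permutations via Lemma~\ref{unorilem}(1), just as the paper cancels (\ref{intint1}) against (\ref{intint2}). Where you diverge is the boundary bookkeeping, and here the paper's route is markedly lighter than what you anticipate. The unoriented bracket (\ref{unoriPoisson}) never smooths at a marked point: its boundary contribution is the single scalar $N(a,b)=n_{+}(a,b)-n_{-}(a,b)$ multiplying the untouched product $\left|a\right|\left|b\right|$. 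Since an interior elimination $\mathsf{E}_p$ or $\mathsf{e}_p$ does not change endpoints on $M$, the boundary count that appears when bracketing a smoothed configuration with $c$ is simply $N(a,c)+N(b,c)$; the mixed terms (\ref{int1})--(\ref{int3}) then cancel cyclically, and the pure boundary term (\ref{mark}) cancels by the antisymmetry $N(a,b)=-N(b,a)$ alone. Consequently there is no analogue of the ($\delta$), ($\delta\delta$) per-marked-point case analysis of Proposition~\ref{Jacobi}, and the ``delicate case'' you flag, where several arcs share one marked point, never arises: the counts $n_{\pm}$ enter only through the integers $N$, not through any local resolution at $M$. Your per-point delta bookkeeping, transplanted from the oriented proof, could be pushed through, but it reproves additivity and antisymmetry of $N$ in a harder way; working with the aggregate $N$ from the start is what makes the paper's verification short. (One caution: as printed, Lemma~\ref{unorilem}(2) reads $n_{+}(a,b)=n_{-}(a,b)$, while the Jacobi computation uses $N(a,b)=-N(b,a)$, i.e.\ $n_{\pm}(a,b)=n_{\mp}(b,a)$; your hedged reading of the lemma for skew-symmetry is the right one.)
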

\begin{proof}
Lemma~\ref{unorilem} implies skew-symmetry of the bracket. 
We have to show the Jacobi identity. 
Let us denote $n_{+}(a,b)-n_{-}(a,b)$ by $N(a,b)$. 
Let $a,b$ and $c$ be in $\overline{{\sf Curves}}(\Sigma)$. 
\begin{align}
\{\{\left|a\right|,\left|b\right|\},\left|c\right|\}
=&\sum_{p\in a\cap b\cap\operatorname{int}S}
\{\left|\mathsf{E}_p(a,b)\right|,\left|c\right|\}-\{\left|\mathsf{e}_p(a,b)\right|,\left|c\right|\}\nonumber\\
&+\frac{1}{2}N(a,b)\{\left|a\right|\left|b\right|,\left|c\right|\}.\nonumber
\end{align}
For any $p$ in $a\cap b\cap\operatorname{int}S$, 
\begin{align}
\{\left|\mathsf{e}_p(a,b)\right|,\left|c\right|\}
=&\sum_{q\in a\cap c\cap\operatorname{int}S}
\left|\mathsf{E}_q\mathsf{e}_p(a,b,c)\right|-\left|\mathsf{e}_q\mathsf{e}_p(a,b,c)\right|\nonumber\\
&+\sum_{q\in b\cap c\cap\operatorname{int}S}
\left|\mathsf{E}_q\mathsf{e}_p(a,b,c)\right|-\left|\mathsf{e}_q\mathsf{e}_p(a,b,c)\right|\nonumber\\
&+\frac{1}{2}(N(a,c)+N(b,c))\left|\mathsf{E}_p(a,b)\right|\left|c\right|,\nonumber
\end{align}
\begin{align}
\{\left|\mathsf{E}_p(a,b)\right|,\left|c\right|\}
=&\sum_{q\in a\cap c\cap\operatorname{int}S}
\left|\mathsf{E}_q\mathsf{E}_p(a,b,c)\right|-\left|\mathsf{e}_q\mathsf{E}_p(a,b,c)\right|\nonumber\\
&+\sum_{q\in b\cap c\cap\operatorname{int}S}
\left|\mathsf{E}_q\mathsf{E}_p(a,b,c)\right|-\left|\mathsf{e}_q\mathsf{E}_p(a,b,c)\right|\nonumber\\
&+\frac{1}{2}(N(a,c)+N(b,c))\left|\mathsf{e}_p(a,b)\right|\left|c\right|.\nonumber
\end{align}
\begin{align}
\{\left|a\right|\left|b\right|,\left|c\right|\}
=&\sum_{q\in a\cap c\cap\operatorname{int}S}
\left|\mathsf{E}_q(a,c)\right|\left|b\right|-\left|\mathsf{e}_q(a,c)\right|\left|b\right|\nonumber\\
&+\sum_{q\in b\cap c\cap\operatorname{int}S}
\left|\mathsf{E}_q(b,c)\right|\left|a\right|-\left|\mathsf{e}_q(b,c)\right|\left|a\right|\nonumber\\
&+\frac{1}{2}(N(a,c)+N(b,c))\left|a\right|\left|b\right|\left|c\right|.\nonumber
\end{align}
Taking the above calculations together, 
\begin{align}
&\{\{\left|a\right|,\left|b\right|\},\left|c\right|\}\nonumber\\
&=\sum_{p\in a\cap b\cap\operatorname{int}S}\sum_{q\in a\cap c\cap\operatorname{int}S}
\left|\mathsf{E}_q\mathsf{E}_p(a,b,c)\right|-\left|\mathsf{e}_q\mathsf{E}_p(a,b,c)\right|
-\left|\mathsf{E}_q\mathsf{e}_p(a,b,c)\right|+\left|\mathsf{e}_q\mathsf{e}_p(a,b,c)\right|\label{intint1}\\
&+\sum_{p\in a\cap b\cap\operatorname{int}S}\sum_{q\in b\cap c\cap\operatorname{int}S}
\left|\mathsf{E}_q\mathsf{E}_p(a,b,c)\right|-\left|\mathsf{e}_q\mathsf{E}_p(a,b,c)\right|
-\left|\mathsf{E}_q\mathsf{e}_p(a,b,c)\right|+\left|\mathsf{e}_q\mathsf{e}_p(a,b,c)\right|\label{intint2}\\
&+\sum_{p\in a\cap b\cap\operatorname{int}S}
\frac{1}{2}(N(a,c)+N(b,c))
(\left|\mathsf{E}_p(a,b)\right|\left|c\right|-\left|\mathsf{e}_p(a,b)\right|\left|c\right|)\label{int1}\\
&+\sum_{p\in a\cap c\cap\operatorname{int}S}
\frac{1}{2}N(a,b)
(\left|\mathsf{E}_q(a,c)\right|\left|b\right|-\left|\mathsf{e}_q(a,c)\right|\left|b\right|)\label{int2}\\
&+\sum_{p\in b\cap c\cap\operatorname{int}S}
\frac{1}{2}N(a,b)
(\left|\mathsf{E}_q(b,c)\right|\left|a\right|-\left|\mathsf{e}_q(b,c)\right|\left|a\right|)\label{int3}\\
&+\frac{1}{4}N(a,b)(N(a,c)-N(b,c))\left|a\right|\left|b\right|\left|c\right|.\label{mark}
\end{align}
Replacing the symbols $(a,b,c)$ of (\ref{intint1}) with $(b,c,a)$, 
\begin{align}
&\sum_{p\in b\cap c\cap\operatorname{int}S}\sum_{q\in b\cap a\cap\operatorname{int}S}
\left|\mathsf{E}_q\mathsf{E}_p(b,c,a)\right|-\left|\mathsf{e}_q\mathsf{E}_p(b,c,a)\right|
-\left|\mathsf{E}_q\mathsf{e}_p(b,c,a)\right|+\left|\mathsf{e}_q\mathsf{e}_p(b,c,a)\right|\nonumber\\
&=\sum_{q\in a\cap b\cap\operatorname{int}S}\sum_{p\in b\cap c\cap\operatorname{int}S}
\left|\mathsf{e}_q\mathsf{E}_p(a,b,c)\right|-\left|\mathsf{E}_q\mathsf{E}_p(a,b,c)\right|
-\left|\mathsf{e}_q\mathsf{e}_p(a,b,c)\right|+\left|\mathsf{E}_q\mathsf{e}_p(b,c,a)\right|\nonumber\\
&=-(\ref{intint2})\nonumber
\end{align}
by using Lemma~\ref{unorilem}. 
Consequently, (\ref{intint1}) and (\ref{intint2}) vanish in 
$\{a,\{b,c\}\}+\{b,\{c,a\}\}+\{c,\{a,b\}\}$. 
We can show that (\ref{int1}) of $\{b,\{c,a\}\}$ cancels the sum of (\ref{int3}) and (\ref{int2}) of $\{c,\{a,b\}\}$ by similar calculation. 
Finally, We can confirm that (\ref{mark}) of $\{a,\{b,c\}\}+\{b,\{c,a\}\}+\{c,\{a,b\}\}$ vanishes by using $N(a,b)=-N(b,a)$. 
\end{proof}
We denote the Poisson algebra $(\UGSA{\Sigma}^{+},\{\cdot,\cdot\})$ by $\UGPA{\Sigma}^{+}$. 

We can consider variations of $\UGPA{\Sigma}^{+}$ by substituting $T=1$ and $T=-1$ by the same way as substituting $t=1$ in $\GPA{\Sigma}^{+}$ and $\GCPA{\Sigma}^{+}$. 
Let us denote these Poisson $\mathbf{K}$-algebras by $\UGPA{\Sigma}^{+}|_{T=1}$ and $\UGPA{\Sigma}^{+}|_{T=-1}$. 
If we substitute $T=1$, then $\mathbf{K}G$ is replaced with $K$, 
$\hat{Z}^{+}(\Sigma)$ with $\hat{Z}(\Sigma)=\mathbf{K}\hat{\pi}(\Sigma)/\bar{C}_M$ where $\hat{\pi}(\Sigma)$ is the set of homotopy classes of curves on $\Sigma$ and $\bar{C}_M$ is the submodule of $\mathbf{K}\hat{\pi}(\Sigma)$ generated by the contractible arcs.
Therefore $\UGPA{\Sigma}^{+}|_{T=1}$ is the symmetric $\mathbf{K}$-algebra of $\bar{Z}(\Sigma)$ with the Poisson bracket induced from $\UGPA{\Sigma}^{+}$. 
We denote the Poisson algebra by $\UGPA{\Sigma}$. 

We will discuss about the Poisson algebra $\UGPA{\Sigma}^{+}|_{T=-1}$.
Concretely speaking, we will give a quantization of a quotient Poisson algebra of $\UGPA{\Sigma}^{+}|_{T=-1}$. 
As a first step, we define the skein algebra of $\Sigma$ defined by Muller~\cite{Muller}. 
The skein algebra $\SKQ{\Sigma}$ is given by the quotient of the algebra of framed links in $\Sigma\times [0,1]$ by the skein relators. 
Next, we give the Poisson algebra $\SK{\Sigma}$ which is a quotient of $\UGPA{\Sigma}^{+}|_{T=-1}$ by the relations corresponding to the skein relations. 
Finally, we will give a quantization homomorphism from $\SKQ{\Sigma}$ to $\SK{\Sigma}$ by $q\to 1$. 

We give the definition of framed links on a bordered surface $\Sigma$ combinatorially by using link diagrams according to Muller~\cite{Muller}.
\begin{DEF}\label{diagram}
Let $D=\{a_1,a_2,\dots,a_n\}\subset\overline{\mathsf{Curves}}(\Sigma)$ be generic unoriented curves on $\Sigma$.
We consider $D$ as the immersion $a_1\cup a_2\cup\dots\cup a_n$ from the disjoint union of domains to $S$.
We give a strict order ``<'' on $D^{-1}(p)$ for any intersection point $p$ of $D$ in $\operatorname{int}S$. 
We give a total order ``$\leq$'' on $D^{-1}(m)$ for any intersection point $m$ of $D$ in $M$.
We define \emph{the link diagram} $D$ on $\Sigma$ as generic curves $D$ equipped with these ordered set, \emph{the crossing data} of $D$.
\end{DEF}
If the strictly ordered set $D^{-1}(p)=\{p_1,p_2\}$ has the order $p_1<p_2$, then we can also describe diagrammatically it in a neighborhood of $p\in\operatorname{int}S$ as Figure~\ref{crossing}.
We illustrate the neighborhood of $p\in\operatorname{int}S$ by gapping the strand containing $p_2$ at $p$. 
We call the strand containing $p_1$ (resp. $p_2$) is \emph{the overcrossing} (resp. \emph{the undercrossing}) of $p_1$ (resp. $p_2$) at $p$.

If we take two elements $m_1$ and $m_2$ from the total ordered set $D^{-1}(m)$,
we can also describe the strand of $m_1$ and $m_2$ diagrammatically in a neighborhood of $m$ in the same way. 
We illustrate as the real crossing for $m_1=m_2$.
We call the strand of $m_1$ is \emph{the simultaneous-crossing} with $m_2$ at $m$.
\begin{figure}[h]
\centering
\includegraphics[scale=0.5, clip]{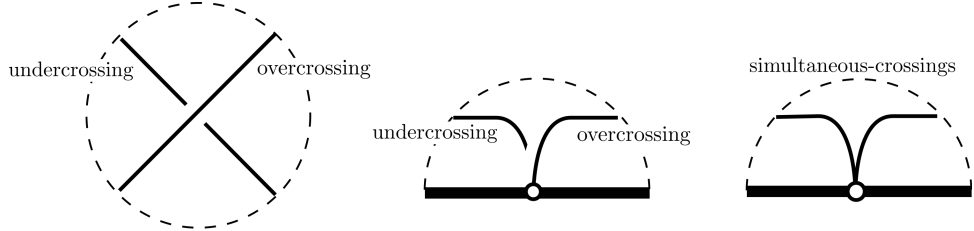}
\caption{the crossing data}
\label{bcrossing}
\end{figure}

Let us denote by $\mathscr{D}(\Sigma)$ the set of link diagrams on $\Sigma$.
We define an equivalence relation, \emph{regular isotopy}, on $\mathscr{D}(\Sigma)$ according to the approach of Kauffman~\cite{Kauffman}.
\begin{DEF}\label{framedlink}
Link diagrams $D$ and $D'$ are \emph{regularly isotopic} if $D$ is obtained from $D'$ by a finite sequence of ambient isotopy of $S$ fixing collar of $\partial S$ and preserving crossing data and Reidemeister moves (R2), (R3), (Rb2-1) and (Rb2-2). See Figure~\ref{Reidemeister}.
The set of regular isotopy classes of link diagrams on $\Sigma$ is denoted by $\mathscr{L}(\Sigma)^{+}$.
A \emph{framed link} is an element in $\mathscr{L}(\Sigma)^{+}$ and we denote by $[D]$ the framed link represented by the link diagram $D$.
\end{DEF}
\begin{figure}[h]
\centering
\includegraphics[scale=0.5, clip]{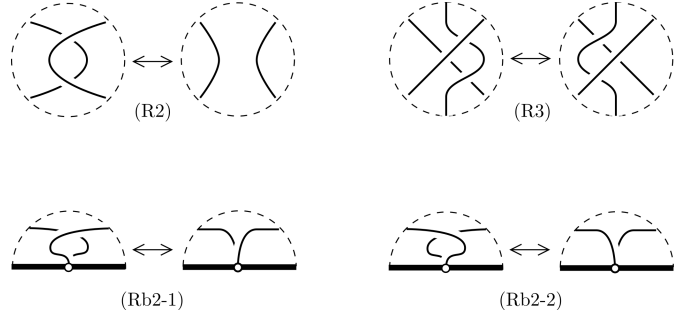}
\caption{the Reidemeister moves}
\label{Reidemeister}
\end{figure}
We remark that $\mathscr{D}(\Sigma)$ has the empty diagram $\emptyset$ and $\mathscr{L}(\Sigma)^{+}$ has the empty link $\left[\emptyset\right]$.
We can define the multiplication of two link diagrams $D_1$ and $D_2$ by superposition of link diagrams if $D_1\cup D_2$ is generic.
Then we give crossing data on $D_1\cap D_2$ such that all strands of $D_1$ contained in neighborhoods of $D_1\cap D_2$ are overcrossings and strands of $D_2$ are undercrossings.
We denote the multiplication by $D_1\cdot D_2$.
Let $L_1$ and $L_2$ be framed links on $\Sigma$.
We can take representatives $D_1$ of $L_1$ and $D_2$ of $L_2$ such that $D_1\cup D_2$ is generic curves on $\Sigma$.
Then $\mathscr{L}(\Sigma)^{+}$ has the multiplicative structure defined by $L_1L_2=\left[D_1\cdot D_2\right]$.

Next, we introduce the skein algebra of $\Sigma$.
Let us denote by $\mathbf{K}[q^{\frac{1}{2}}, q^{-\frac{1}{2}}]$ the ring of Laurent polynomials in one variable $q^{\frac{1}{2}}$ and 
$\mathbf{K}[q^{\frac{1}{2}}, q^{-\frac{1}{2}}]\mathscr{L}(\Sigma)^{+}$ the free $\mathbf{K}[q^{\frac{1}{2}}, q^{-\frac{1}{2}}]$-module on $\mathscr{L}(\Sigma)^{+}$.
The free module $\mathbf{K}[q^{\frac{1}{2}}, q^{-\frac{1}{2}}]\mathscr{L}(\Sigma)^{+}$ has the multiplication induced from $\mathscr{L}(\Sigma)^{+}$.
We prepare two sets $r(b)$ and $\ell(b)$ to define a relation of the skein algebra for any strand $b$ contained in a neighborhood of $M$ of a link diagram $D$.
The elements of $r(b)$ (resp. $\ell(b)$) is the simultaneous-crossings with $b$ such that these simultaneous-crossings are located on the right (resp. left) side of the simultaneous-crossing of $b$.

Let $D$ be a link diagram.
For an intersection point $p\in\operatorname{int}S$ of $D$, we define link diagrams $D_0^p$ and $D_\infty^p$ by eliminating the crossing at $p$ as illustrated in Figure~\ref{Kauffman}.
The crossing data on the intersection points of $D_0$ and $D_\infty$ are naturally induced from the crossing data of $D$.
\begin{figure}[h]
\centering
\includegraphics[scale=0.5, clip]{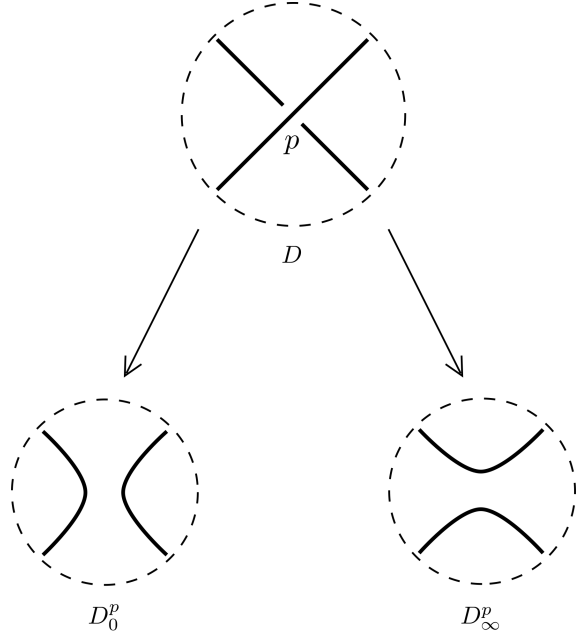}
\caption{link diagrams $D_0^p$ and $D_\infty^p$}
\label{Kauffman}
\end{figure}
We assume that the crossing data $D^{-1}(m)$ on $m\in M$ has the order 
\[
 \cdots<a_1=a_2=\dots=a_i<b=b_1=b_2=\dots=b_j<c_1=c_2=\dots=c_k<\cdots.
\]
Then we define link diagrams $D_{+}^b$ and $D_{-}^b$ by changing the order of $D^{-1}(m)$.
The link diagrams $D_{+}^b$ and $D_{-}^b$ have the same underlying immersion with one of $D$.
The crossing data of $(D_{+}^b)^{-1}(m)$ is
\[
 \cdots<a_1=a_2=\dots=a_i<b_1=b_2=\dots=b_j<b<c_1=c_2=\dots=c_k<\cdots
\]
and the crossing data of $(D_{-}^b)^{-1}(m)$
\[
 \cdots<a_1=a_2=\dots=a_i<b<b_1=b_2=\dots=b_j<c_1=c_2=\dots=c_k<\cdots.
\]
\begin{DEF}\label{skeinalgebra}
\emph{The skein algebra} $\SKQ{\Sigma}$ on $\Sigma$ is the quotient of $\mathbf{K}[q^{\frac{1}{2}}, q^{-\frac{1}{2}}]\mathscr{L}(\Sigma)$ by the ideal generated by the following relations:
\begin{enumerate}
\item $\left[D\right]=q\left[D_0^p\right]+q^{-1}\left[D_{\infty}^p\right]$ for any link diagram $D$ and its intersection point $p$ in $\operatorname{int}S$ 
\quad(\emph{the Kauffman bracket skein relation}),
\item $q^{\frac{1}{2}(\#\ell(b)-\#r(b))}\left[D_{-}^b\right]=\left[D\right]=q^{\frac{1}{2}(\#r(b)-\#\ell(b))}\left[D_{+}^b\right]$ for any link diagram $D$ and its intersection point $m$ in $M$ 
\quad(\emph{the boundary skein relation}),
\item $\left[\mathcal{O}\right]=-(q^2+q^{-2})\left[\emptyset\right]$ 
where $\mathcal{O}$ is the curl on $\operatorname{int}S$,
\item $\left[\mathcal{O}_m\right]=0$ for any $m$ in $M$ 
where $\mathcal{O}_m$ is the monogon with vertex $m$.
\end{enumerate}
\end{DEF}
We remark that the ideal generated by the relation $A=B$ means that the ideal generated by $A-B$.

We prepare a Poisson algebra whose quantization is $\SKQ{\Sigma}$.
The Poisson algebra denoted by $\SK{\Sigma}$ is obtained from $\UGPA{\Sigma}^{+}|_{T=-1}$.
\begin{DEF}
The Poisson $\mathbf{K}$-algebra $\SK{\Sigma}$ is the quotient of the algebra $\UGSA{\Sigma}^{+}|_{T=-1}$ by the ideal generated by the following relations:
\begin{enumerate}
\item $\left|D\right|=\left|D_0^p\right|+\left|D_\infty^p\right|$ for any underlying immersion of a link diagram $D$ and its intersection point $p\in\operatorname{int}S$,
\item $\left|\mathcal{O}\right|=-2$.
\end{enumerate}
We remark that the relation (1) is independent of the crossing data of $D$.
We can easily confirm that the ideal is the Poisson ideal of $\UGPA{\Sigma}^{+}|_{T=-1}$.
Therefore, we can obtain a Poisson structure on $\SK{\Sigma}$.
\end{DEF}
We calculate the Poisson bracket on $\SK{\Sigma}$ according to the definition of Poisson bracket (\ref{unoriPoisson}).
Let us denote by $\left|D\right|$ the element of $\SK{\Sigma}$ represented by generic curves $D$.
Let $\{a,b\}\subset\overline{\mathsf{Curves}}(\Sigma)$ be generic curves.
We assume that the intersection points of $a$ and $b$ in $\operatorname{int}S$ is $\{p_1, p_2,\dots, p_n\}$.
Then,
\begin{align*}
\left|\mathsf{E}_{p_i}(a,b)\right|
=&\sum_{\mathsf{X}\in\{\mathsf{E},\mathsf{e}\}}
\left|\mathsf{X}_1\mathsf{X}_2\cdots\mathsf{X}_{i-1}\mathsf{E}_i\mathsf{X}_{i+1}\mathsf{X}_{i+2}\cdots\mathsf{X}_n(a,b)\right|,\\
\left|\mathsf{e}_{p_i}(a,b)\right|
=&\sum_{\mathsf{X}\in\{\mathsf{E},\mathsf{e}\}}
\left|\mathsf{X}_1\mathsf{X}_2\cdots\mathsf{X}_{i-1}\mathsf{e}_i\mathsf{X}_{i+1}\mathsf{X}_{i+2}\cdots\mathsf{X}_n(a,b)\right|,\\
\left|a\right|\left|b\right|
=&\sum_{\mathsf{X}\in\{\mathsf{E},\mathsf{e}\}}
\left|\mathsf{X}_1\mathsf{X}_2\cdots\mathsf{X}_n(a,b)\right|.
\end{align*}
Therefore,
\begin{align*}
\{\,\left|a\right|,\left|b\right|\,\}
=&\sum_{i=1}^n\sum_{\mathsf{X}\in\{\mathsf{E},\mathsf{e}\}}
\left|\mathsf{X}_1\cdots\mathsf{X}_{i-1}\mathsf{E}_i\mathsf{X}_{i+1}\cdots\mathsf{X}_n(a,b)\right|
-\left|\mathsf{X}_1\cdots\mathsf{X}_{i-1}\mathsf{e}_i\mathsf{X}_{i+1}\cdots\mathsf{X}_n(a,b)\right|\\
&+\sum_{\mathsf{X}\in\{\mathsf{E},\mathsf{e}\}}
\frac{1}{2}(n_{+}(a,b)-n_{-}(a,b))\left|\mathsf{X}_1\mathsf{X}_2\cdots\mathsf{X}_n(a,b)\right|.
\end{align*}
We observe a correspondence between the set of symbols $X=\{\,\mathsf{X}_1\mathsf{X}_2\cdots\mathsf{X}_n\mid \mathsf{X}=\mathsf{E},\mathsf{e}\,\}$ 
and $E=\bigsqcup_{i=1}^n\{\,\mathsf{X}_1\mathsf{X}_2\cdots\mathsf{X}_{i-1}\mathsf{E}_i\mathsf{X}_{i+1}\cdots\mathsf{X}_n\mid \mathsf{X}=\mathsf{E},\mathsf{e}\,\}$.
Fix an element $\mathsf{Y}_1\mathsf{Y}_2\cdots\mathsf{Y}_n\in X$, 
then the number of elements in $E$ which coincide with $\mathsf{Y}_1\mathsf{Y}_2\cdots\mathsf{Y}_n$ is the number of symbols $\mathsf{E}_i$ appearing in $\mathsf{Y}_1\mathsf{Y}_2\cdots\mathsf{Y}_n$.
Consequently, we can describe the bracket of $\SK{\Sigma}$ as the following,
\begin{align}\label{bracket-SKH}
\begin{split}
\{\,\left|a\right|,\left|b\right|\,\}
=&\sum_{\mathsf{X}\in\{\mathsf{E},\mathsf{e}\}}
(\#\{\mathsf{X}_i=\mathsf{E}_i\}-\#\{\mathsf{X}_i=\mathsf{e}_i\})
\left|\mathsf{X}_1\mathsf{X}_2\cdots\mathsf{X}_n(a,b)\right|\\
&\quad+\sum_{\mathsf{X}\in\{\mathsf{E},\mathsf{e}\}}
\frac{1}{2}(n_{+}(a,b)-n_{-}(a,b))\left|\mathsf{X}_1\mathsf{X}_2\cdots\mathsf{X}_n(a,b)\right|
\end{split}
\end{align}
where $\#\{\mathsf{X}_i=\mathsf{E}_i\}$ (resp. $\#\{\mathsf{X}_i=\mathsf{e}_i\}$) is the number of $\{\mathsf{E}_i\mid i\in\{1,2,\dots,n\}\}$ (resp. $\{\mathsf{e}_i\mid i\in\{1,2,\dots,n\}\}$) appearing in $\mathsf{X}_1\mathsf{X}_2\cdots\mathsf{X}_n$.

Let $\mathbb{C}[[h]]$ be the ring of complex formal power series in one variable $h$.
We consider the skein algebra of $\Sigma$ with coefficient in $\mathbb{C}[[h]]$, 
that is, the quotient of $\mathbb{C}[[h]]\mathscr{L}(\Sigma)^{+}$ such that its skein relations are obtained by substituting $q=\exp(h/2)$.
We denote by $\SKH{\Sigma}$ the skein algebra with coefficient in $\mathbb{C}[[h]]$.
We also use the same symbol as a framed link $[D]$ to represent the element in $\SKH{\Sigma}$.

\emph{A simple multicurve} $D$ is an immersion of generic curves on $\Sigma$ with no intersection points in $\operatorname{int}S$, no curls and no monogons with vertices in $M$.
We define the framed link of the simple multicurve $D$ as a framed link $\left[D\right]$ with only simultaneous-crossings.
We denote by $\mathsf{SMulti}(\Sigma)$ the set of links of simple multicurves on $\Sigma$.
\begin{LEM}[Muller~\cite{Muller} Lemma 4.1.]\label{freeness}
The skein algebra $\SKH{\Sigma}$ is a topologically free module.
\end{LEM}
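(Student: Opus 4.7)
The plan is to reduce the topological freeness over $\mathbb{C}[[h]]$ to the (purely algebraic) freeness of the skein algebra over $\mathbb{C}[q^{\frac{1}{2}},q^{-\frac{1}{2}}]$. This latter freeness is exactly Muller's Lemma 4.1, which asserts that $\SKQ{\Sigma}$ is a free $\mathbb{C}[q^{\frac{1}{2}},q^{-\frac{1}{2}}]$-module whose basis is the set $\mathsf{SMulti}(\Sigma)$ of framed links of simple multicurves on $\Sigma$. The proof of that result proceeds by a diagrammatic argument: one shows first that any link diagram $D$ can be rewritten, via the Kauffman bracket relation, the boundary skein relation, and elimination of curls and monogons at marked points, as a $\mathbb{C}[q^{\frac{1}{2}},q^{-\frac{1}{2}}]$-linear combination of elements of $\mathsf{SMulti}(\Sigma)$, and then that these expressions are linearly independent. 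I will take this as given.

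Given Muller's result, I would first perform the base change $\mathbb{C}[q^{\frac{1}{2}},q^{-\frac{1}{2}}] \to \mathbb{C}[[h]]$ via $q \mapsto \exp(h/2)$. The resulting module
\[
 \SKQ{\Sigma} \otimes_{\mathbb{C}[q^{\frac{1}{2}},q^{-\frac{1}{2}}]} \mathbb{C}[[h]]
\]
is free as a $\mathbb{C}[[h]]$-module with basis $\mathsf{SMulti}(\Sigma)$, because freeness is preserved under arbitrary base change. The algebra $\SKH{\Sigma}$ is by construction obtained from this base change followed by $h$-adic completion, since the defining relations of $\SKQ{\Sigma}$ involve only finitely many coefficients from $\mathbb{C}[q^{\frac{1}{2}},q^{-\frac{1}{2}}]$ and pass through the substitution unchanged.

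Next I would identify $\SKH{\Sigma}$ with the completion of the free module on $\mathsf{SMulti}(\Sigma)$. Letting $V$ denote the $\mathbb{C}$-vector space with basis $\mathsf{SMulti}(\Sigma)$, the $h$-adic completion of $V\otimes_{\mathbb{C}}\mathbb{C}[[h]]$ is $V[[h]]$, which is by definition a topologically free $\mathbb{C}[[h]]$-module. The key verification here is that the expansion of each framed link $[D] \in \SKH{\Sigma}$ as a (possibly infinite) $\mathbb{C}[[h]]$-linear combination of simple multicurves converges in the $h$-adic topology: this follows because the expansion obtained from Muller's algorithm uses only the finitely many coefficients coming from $\mathbb{C}[q^{\frac{1}{2}},q^{-\frac{1}{2}}]$, which upon $q=\exp(h/2)$ become honest power series in $h$.

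The main obstacle, as I see it, is simply bookkeeping: verifying that the $h$-adic topology on $\SKH{\Sigma}$ is Hausdorff (i.e.\ $\bigcap_n h^n \SKH{\Sigma} = 0$) and that the natural map from the algebraic base change to the completed algebra has dense image with the appropriate kernel. Both follow formally once one has the free basis, because $V[[h]]$ is manifestly Hausdorff and complete, and the map identifying $\SKH{\Sigma}$ with $V[[h]]$ sends $[D]$ to its Muller expansion coefficient-by-coefficient. No deep surface-topological input beyond Muller's basis theorem is needed.
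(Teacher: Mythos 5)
Your argument is correct in substance and rests on the same essential input as the paper, namely Muller's expansion of an arbitrary link in the basis $\mathsf{SMulti}(\Sigma)$ of simple multicurves, but it transfers that input formally rather than re-running it. The paper's proof redoes the diagrammatic algorithm with $\mathbb{C}[[h]]$-coefficients: it expands each diagram by the Kauffman, boundary, curl and monogon relations, verifies that the expansion is unchanged under the moves (R2), (R3), (Rb2-1), (Rb2-2) so that it descends to links, and thereby produces explicit mutually inverse maps $\Phi\colon\SKH{\Sigma}\to\mathbb{C}\mathsf{SMulti}(\Sigma)[[h]]$ and $\Psi$; you instead quote Muller's Lemma~4.1 over $\mathbb{C}[q^{\frac{1}{2}},q^{-\frac{1}{2}}]$ and transport it by the base change $q\mapsto\exp(h/2)$ followed by $h$-adic completion. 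Your route is shorter and hides the move-invariance inside the cited lemma; the paper's route pays that cost but is rewarded later, since $\Phi$, $\Psi$ and the identification $\SK{\Sigma}\cong\mathbb{C}\mathsf{SMulti}(\Sigma)$ are reused in the proof of Theorem~\ref{quantization}. The one point you should not wave through as ``by construction'' is the identification of $\SKH{\Sigma}$ with the completed base change: as literally defined, $\SKH{\Sigma}$ is the quotient of the free module $\mathbb{C}[[h]]\mathscr{L}(\Sigma)^{+}$ by the substituted relators, and right-exactness of tensor identifies this with the uncompleted extension of scalars $\SKQ{\Sigma}\otimes_{\mathbb{C}[q^{\frac{1}{2}},q^{-\frac{1}{2}}]}\mathbb{C}[[h]]$, which (given Muller's basis) is free of infinite rank over $\mathbb{C}[[h]]$ and therefore not $h$-adically complete, hence not isomorphic to any $V[[h]]$. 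Topological freeness in the stated sense thus requires either reading the coefficients as honest power series of diagrams, which is what the paper implicitly does when it writes $f=\sum_{n}(\sum_{D}a_{D}^{(n)}[D])h^{n}$ and what $\Phi$ and $\Psi$ realize, or inserting the completion explicitly, as you do; once that reading of the definition is fixed, your argument goes through.
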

\begin{proof}
We construct a $\mathbb{C}[[h]]$-linear map from $\SKH{\Sigma}$ to the set of formal power series with coefficient in $\mathbb{C}\mathsf{SMulti}(\Sigma)$ where $\mathbb{C}\mathsf{SMulti}(\Sigma)$ is the complex vector space spanned by $\mathsf{SMulti}(\Sigma)$.
For any link diagram $D$, 
we expand $[D]$ in a series of links of simple multicurves in $\SKH{\Sigma}$.
First, 
we can eliminate intersection points in $\operatorname{int}S$ of $D$ by using the Kauffman bracket skein relation.
Next, 
the crossings of $D$ on $M$ can be deformed into simultaneous-crossings by using the boundary skein relation.
Finally, 
we vanish curls and monogons by relation (3) and (4) in Definition~\ref{skeinalgebra}.
Consequently, 
$[D]$ can be expanded in a formal power series with coefficient in $\mathbb{C}\mathsf{SMulti}(\Sigma)$.
The local moves of link diagrams is realized by the skein relation and the boundary skein relation;
\begin{align*}
\vcenter{\hbox{\includegraphics[width=1cm]{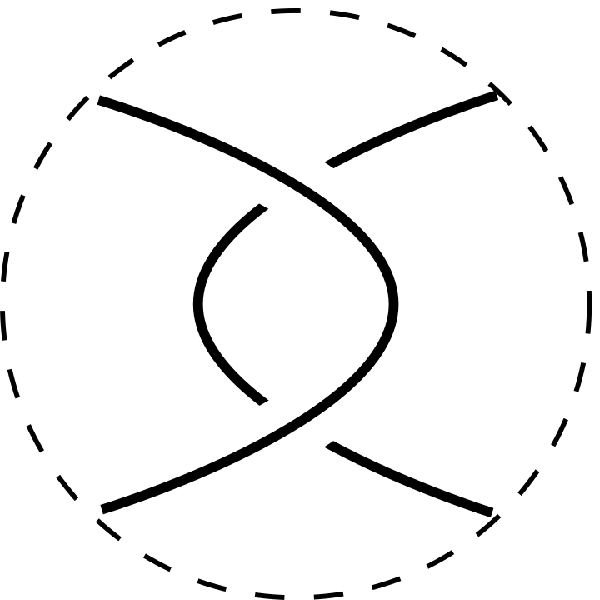}}}\ 
=&q\ \vcenter{\hbox{\includegraphics[width=1cm]{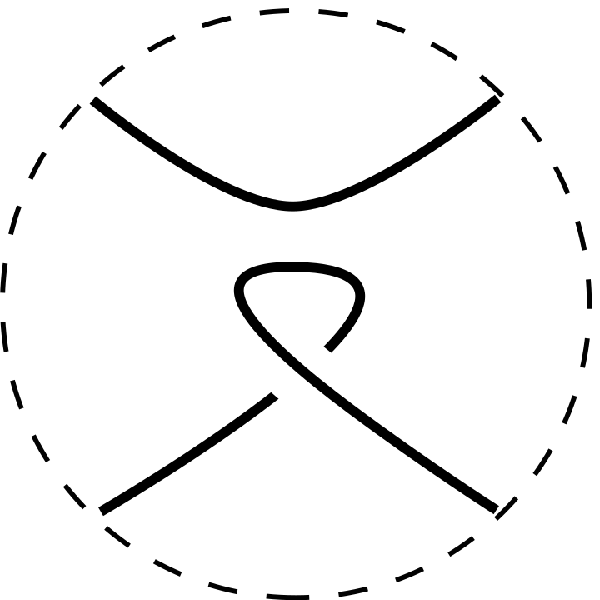}}}\ 
+q^{-1}\ \vcenter{\hbox{\includegraphics[width=1cm]{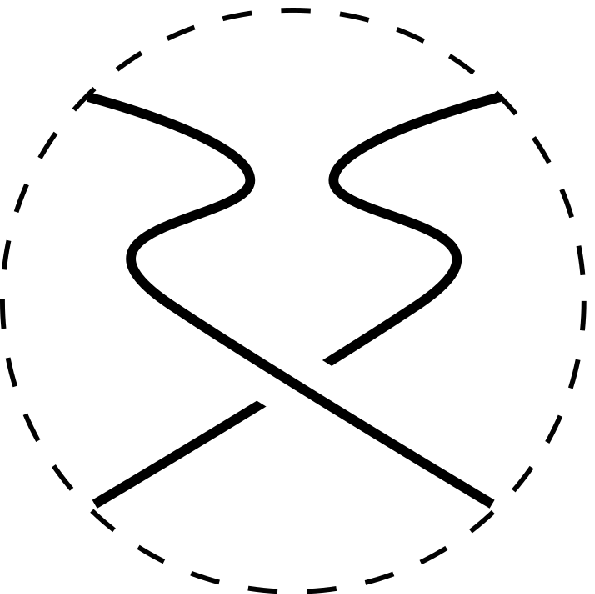}}}\ \\
=&q^2\ \vcenter{\hbox{\includegraphics[width=1cm]{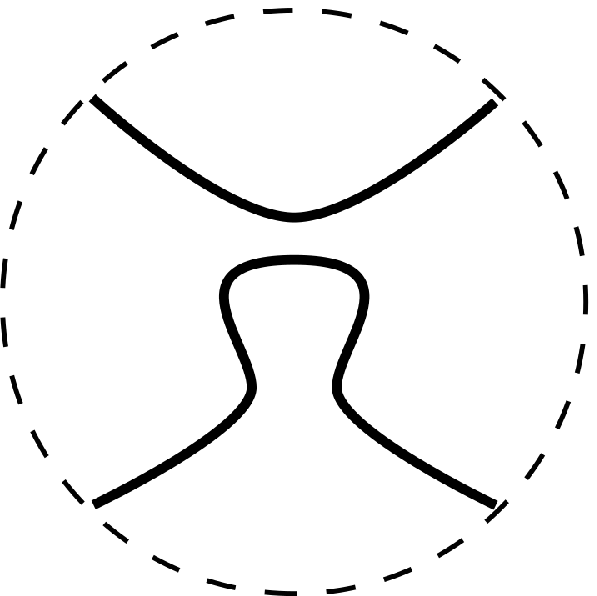}}}\ 
+\ \vcenter{\hbox{\includegraphics[width=1cm]{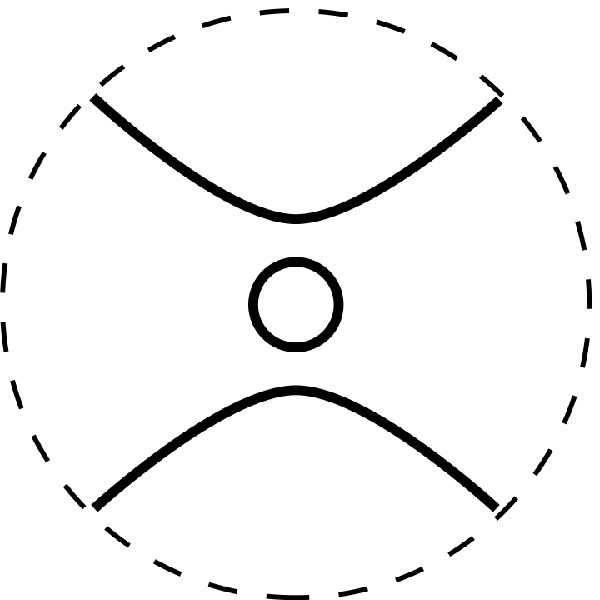}}}\ 
+\ \vcenter{\hbox{\includegraphics[width=1cm]{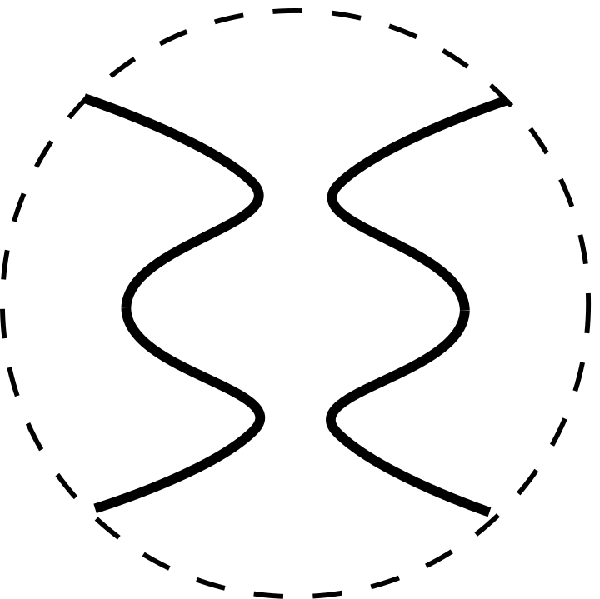}}}\ 
+q^{-2}\ \vcenter{\hbox{\includegraphics[width=1cm]{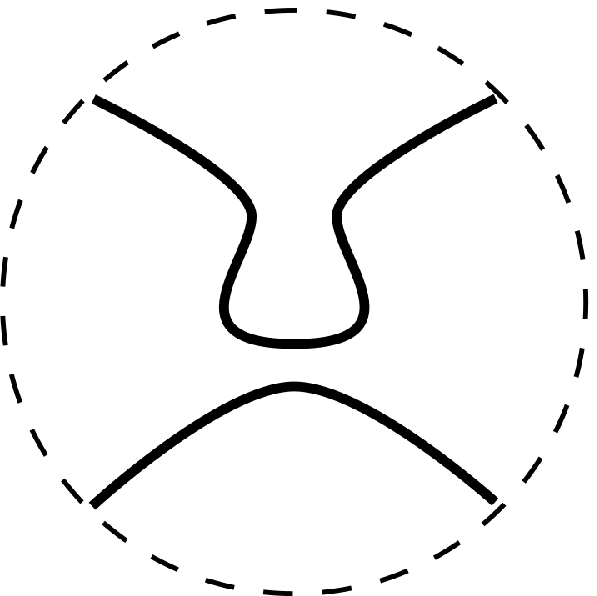}}}\ \\
=&q^2\ \vcenter{\hbox{\includegraphics[width=1cm]{R23}}}\ 
-(q^2+q^{-2})\ \vcenter{\hbox{\includegraphics[width=1cm]{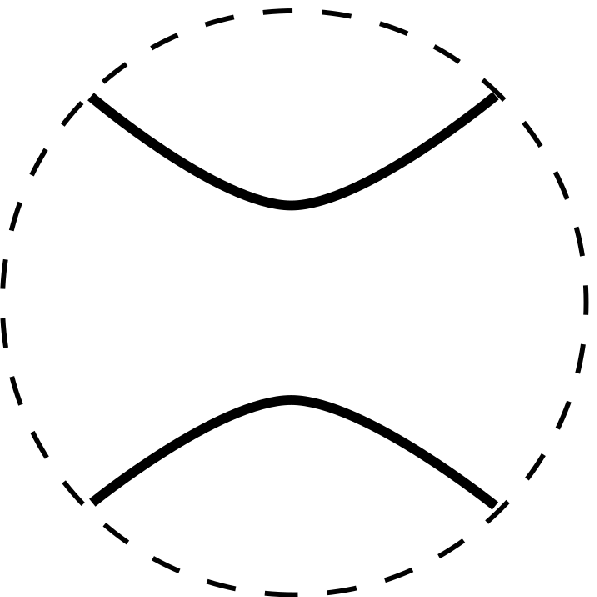}}}\ 
+\ \vcenter{\hbox{\includegraphics[width=1cm]{R26}}}\ 
+q^{-2}\ \vcenter{\hbox{\includegraphics[width=1cm]{R27}}}\ \\
=&\ \vcenter{\hbox{\includegraphics[width=1cm]{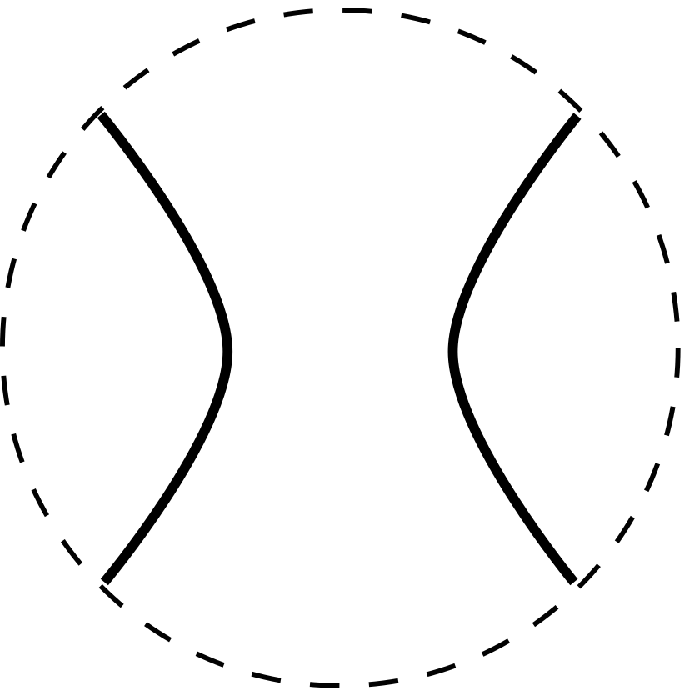}}}\ ,\\
\vcenter{\hbox{\includegraphics[width=1cm]{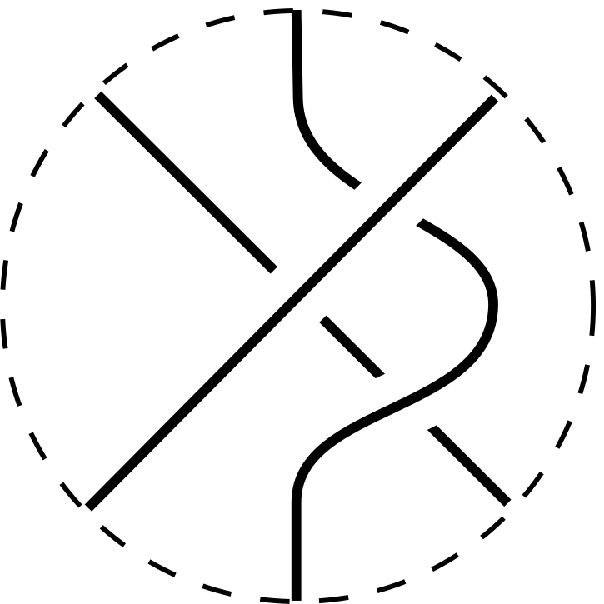}}}\ 
=&q\ \vcenter{\hbox{\includegraphics[width=1cm]{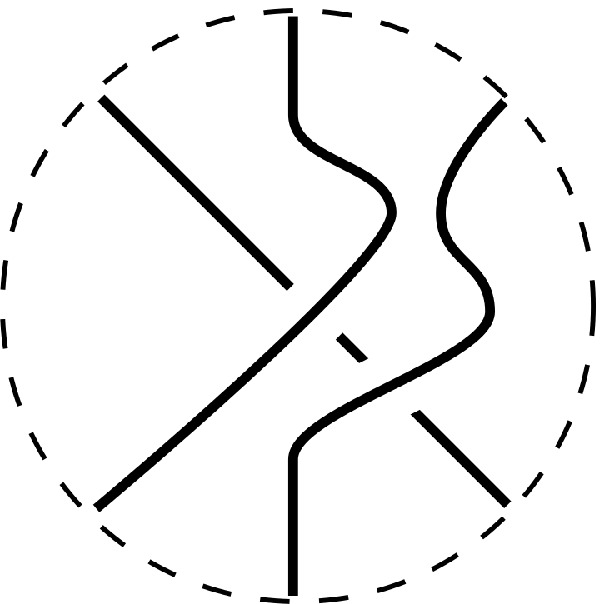}}}\ 
+q^{-1}\ \vcenter{\hbox{\includegraphics[width=1cm]{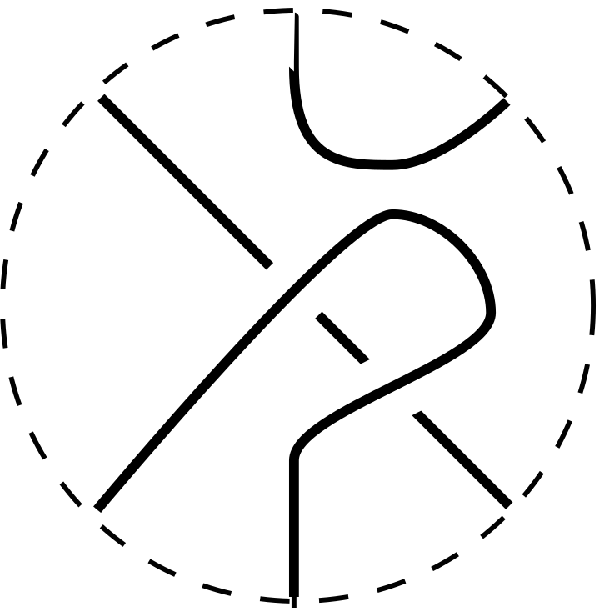}}}\ \\
=&q\ \vcenter{\hbox{\includegraphics[width=1cm]{R31r}}}\ 
+q^{-1}\ \vcenter{\hbox{\includegraphics[width=1cm]{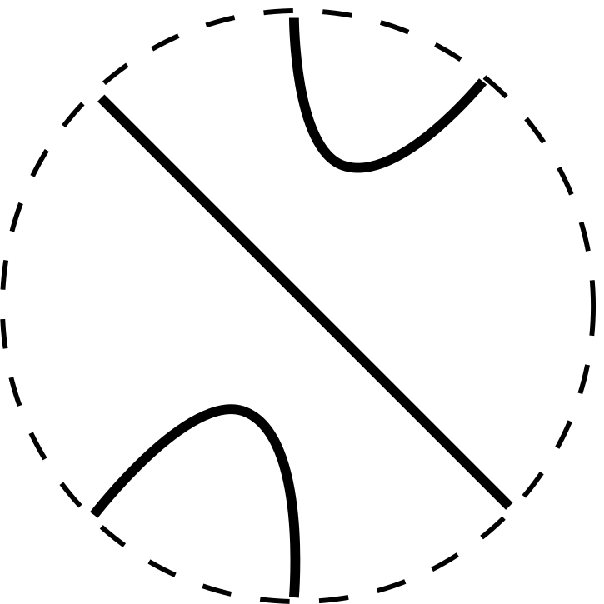}}}\ \\
=&q\ \vcenter{\hbox{\includegraphics[width=1cm]{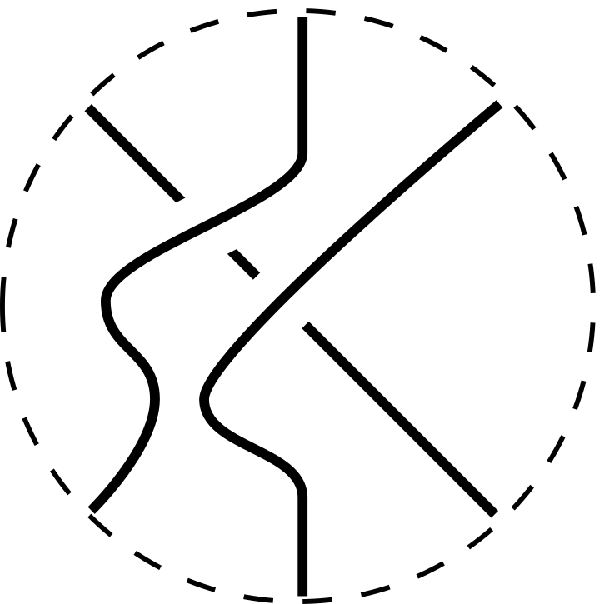}}}\ 
+q^{-1}\ \vcenter{\hbox{\includegraphics[width=1cm]{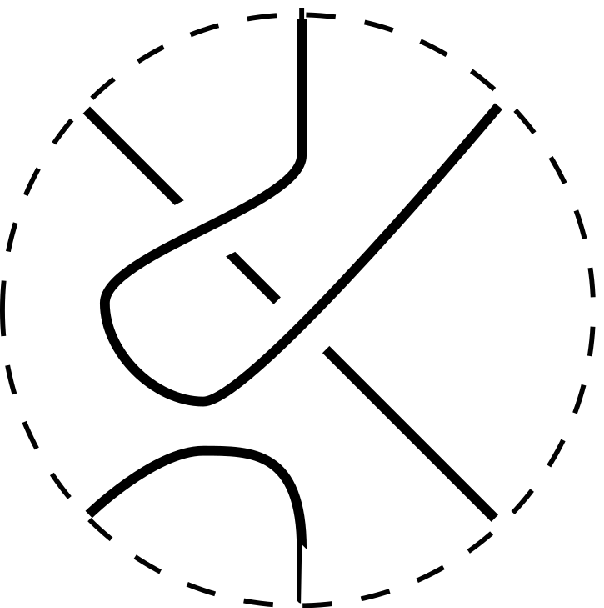}}}\ \\
=&\ \vcenter{\hbox{\includegraphics[width=1cm]{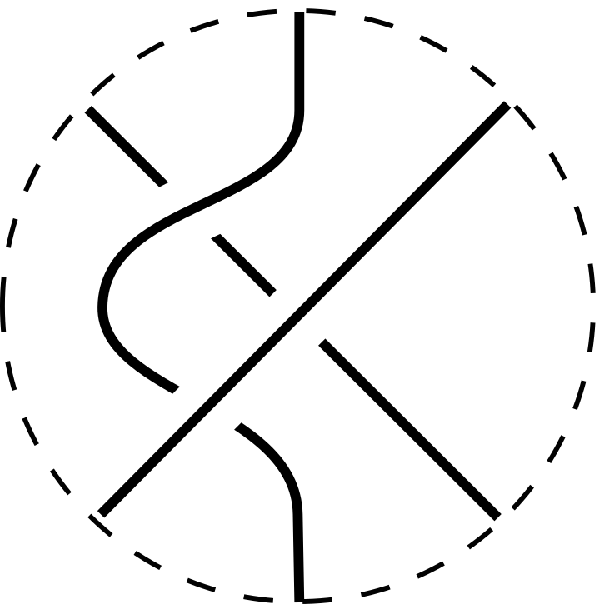}}}\ ,\\
\vcenter{\hbox{\includegraphics[width=1cm]{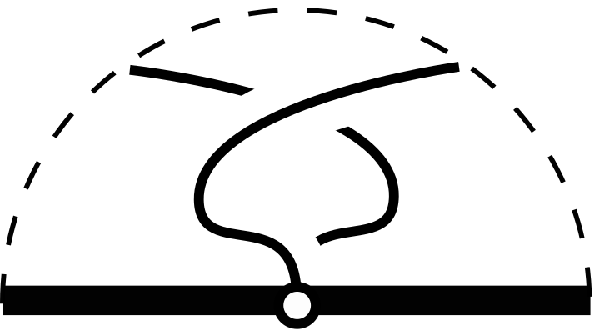}}}\ 
=&q\ \vcenter{\hbox{\includegraphics[width=1cm]{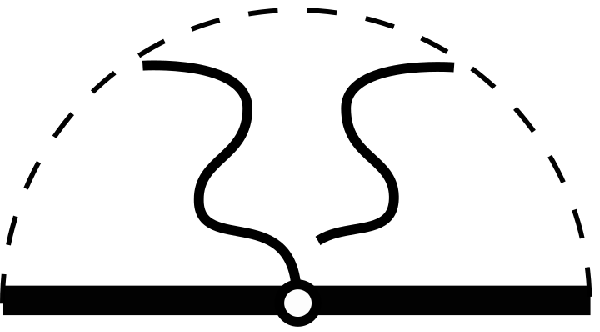}}}\ 
+q^{-1}\ \vcenter{\hbox{\includegraphics[width=1cm]{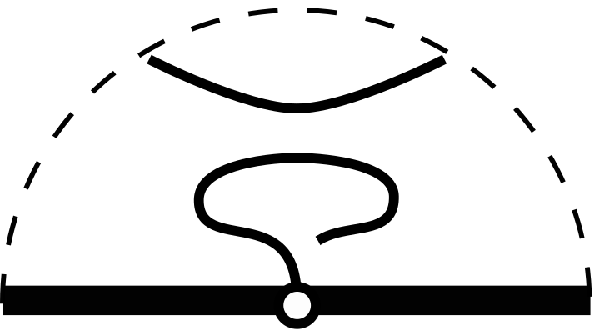}}}\ \\
=&q\ \vcenter{\hbox{\includegraphics[width=1cm]{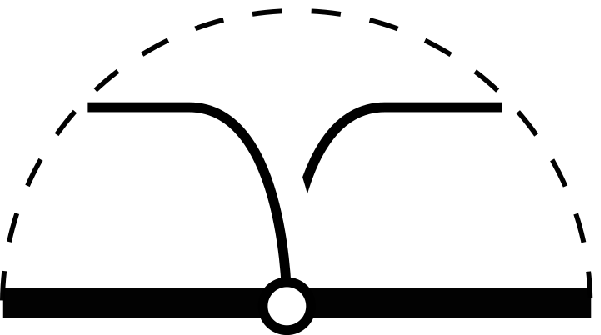}}}\ \\
=&\ \vcenter{\hbox{\includegraphics[width=1cm]{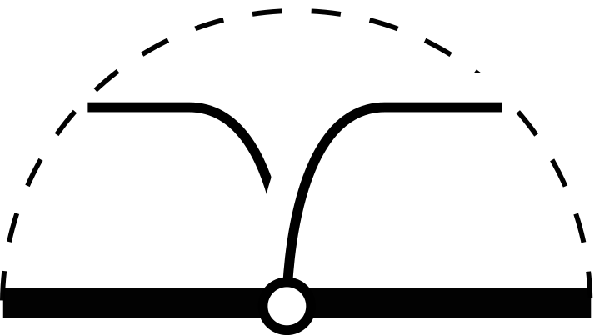}}}\ .
\end{align*}
Therefore, we get the same expansion of $[D]$ for another link diagram $D'$ which is a representative of the link $[D]$.

We can describe any element $f\in\SKH{\Sigma}$ as
\[
 f=\sum_{n\geq 0}
\left(
\sum_{D\in\mathscr{D}(\Sigma)}a_{D}^{(n)}[D]
\right)h^{n}
\]
where $a_{D}^{(n)}\in\mathbb{C}$.
By expanding $[D]$ as the above, 
the element in $\mathbb{C}\mathsf{SMulti}(\Sigma)[[h]]$ is uniquely determined.
We denote by $\Phi\colon\SKH{\Sigma}\to\mathbb{C}\mathsf{SMulti}(\Sigma)[[h]]$ the $\mathbb{C}[[h]]$-linear extension of the above map where $\mathbb{C}\mathsf{SMulti}(\Sigma)[[h]]$ is the formal power series with coefficient in $\mathbb{C}\mathsf{SMulti}(\Sigma)$.
The inverse map $\Psi\colon\mathbb{C}\mathsf{SMulti}(\Sigma)[[h]]\to\SKH{\Sigma}$ is induced by the composed map of 
the inclusion $\mathsf{SMulti}(\Sigma)$ into $\mathscr{L}(\Sigma)^{+}$ and the projection of $\mathscr{L}(\Sigma)^{+}$ to $\SKH{\Sigma}$.
It is clear that $\Phi\circ\Psi=\text{id}_{\SKH{\Sigma}}$ and $\Psi\circ\Phi=\text{id}_{\mathbb{C}\mathsf{SMulti}(\Sigma)[[h]]}$.
\end{proof}

\begin{THM}\label{quantization}
The skein algebra $\SKH{\Sigma}$ gives a quantization of the Poisson algebra $\SK{\Sigma}$.
\end{THM}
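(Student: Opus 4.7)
The plan is to verify the two conditions of the quantization definition for $\SKH{\Sigma}$ over $\mathbb{C}[[h]]$. The first condition, topological freeness as a $\mathbb{C}[[h]]$-module, is already Lemma~\ref{freeness}. What remains is (i) to identify $\SKH{\Sigma}/h\SKH{\Sigma}$ with $\SK{\Sigma}$ as $\mathbb{C}$-algebras via $\tilde{p}$, and (ii) to show that the Poisson bracket induced by $\{\tilde{p}(x),\tilde{p}(y)\}=\tilde{p}((xy-yx)/h)$ agrees with the bracket on $\SK{\Sigma}$ descending from $\UGPA{\Sigma}^{+}|_{T=-1}$ via formula~(\ref{unoriPoisson}) (equivalently~(\ref{bracket-SKH})).

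For (i), I would define $\tilde{p}$ by the specialization $q=\exp(h/2)\mapsto 1$. A term-by-term inspection shows that at $q=1$ each defining relation of $\SKQ{\Sigma}$ reduces to a defining relation of $\SK{\Sigma}$: the Kauffman bracket relation becomes $[D]=[D_0^p]+[D_\infty^p]$; the boundary skein relation collapses to $[D_-^b]=[D]=[D_+^b]$, so only the underlying immersion at $M$ survives; the curl relation becomes $[\mathcal{O}]=-2[\emptyset]$; and the monogon relation persists. The classical limit of a curl on a strand is $-1$ times the strand, which matches the specialization $T=-1$. Well-definedness and surjectivity of $\tilde{p}$ follow. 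Using the expansion of every element of $\SKH{\Sigma}$ as a formal power series over $\mathbb{C}\mathsf{SMulti}(\Sigma)$ from the proof of Lemma~\ref{freeness}, one then reads off $\ker\tilde{p}=h\SKH{\Sigma}$, so $\tilde{p}$ descends to the required algebra isomorphism.

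For (ii), take generic $a,b\in\overline{\mathsf{Curves}}(\Sigma)$ and write $[a]\cdot[b]=[D_a\cdot D_b]$, $[b]\cdot[a]=[D_b\cdot D_a]$ in $\SKH{\Sigma}$. I would transform $D_a\cdot D_b$ into $D_b\cdot D_a$ by flipping the crossing/ordering data at one intersection of $a$ with $b$ at a time, obtaining a telescoping expression for the commutator. At each interior intersection $p\in a\cap b\cap\operatorname{int}S$, two applications of the Kauffman skein relation give a local difference $(q-q^{-1})\bigl([D_0^p]-[D_\infty^p]\bigr)$, and the two resolutions correspond precisely to $\mathsf{E}_p(a,b)$ and $\mathsf{e}_p(a,b)$. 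At each boundary intersection $m\in a\cap b\cap M$, iterated use of the boundary skein relation accumulates a power of $q$ which, after summing over $m$, yields a factor of the form $q^{\frac{1}{2}(n_+(a,b)-n_-(a,b))}$ relating $[D_a\cdot D_b]$ and $[D_b\cdot D_a]$ once the interior has been resolved. Since $q^{k}-q^{-k}=kh+O(h^{3})$ under $q=\exp(h/2)$, division by $h$ and reduction modulo $h$ sends the interior contributions to $\sum_p \bigl([\mathsf{E}_p(a,b)]-[\mathsf{e}_p(a,b)]\bigr)$ and the boundary contribution to $\tfrac{1}{2}(n_+(a,b)-n_-(a,b))\,[a][b]$. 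The intermediate ``half-resolved'' states that appear in the telescoping sum all have the same image under $\tilde{p}$, because all crossing data is forgotten in the classical limit. The resulting expression is exactly~(\ref{bracket-SKH}).

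The most delicate step is the boundary bookkeeping. One must carefully match Muller's right/left counts $\#r(b),\#\ell(b)$ entering the boundary skein relation with the author's convention ``$a$ is locally located on the right/left side of $b$'' used to define $n_\pm(a,b)$, and verify that the accumulated $q$-exponent from repeatedly applying the boundary relation to move every $a$-strand past every $b$-strand at each $m$ equals $\tfrac{1}{2}(n_+(a,b)-n_-(a,b))$. Once this dictionary is set up, the interior computation reduces to a routine two-term skein resolution at each crossing followed by a first-order expansion of $q-q^{-1}$, and the theorem follows by combining the interior and boundary contributions.
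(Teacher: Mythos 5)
Your proposal is correct and its skeleton matches the paper's: topological freeness and the identification $\ker\tilde{p}=h\SKH{\Sigma}$ via the expansion over $\mathbb{C}\mathsf{SMulti}(\Sigma)$ (Lemma~\ref{freeness}), reduction of the bracket check to a generic pair of curves $a,b$, and a first-order expansion of the skein and boundary skein relations under $q=\exp(h/2)$. Where you genuinely differ is the organization of the commutator: the paper passes from both products $[a][b]$ and $[b][a]$ to the simultaneous-crossing diagrams $D(a,b)$ and $D(b,a)$, resolves \emph{all} interior crossings of both at once, observes that the fully resolved states coincide, and reads off $[a][b]-[b][a]=\sum(q^{\varepsilon+\frac{1}{2}N}-q^{-\varepsilon-\frac{1}{2}N})[\,\cdot\,]$, which is then matched against the resolved form (\ref{bracket-SKH}); you instead flip one crossing at a time and telescope, so each interior step contributes $(q-q^{-1})\bigl([D_0^p]-[D_\infty^p]\bigr)$ and the classical limit lands directly on the unresolved formula (\ref{unoriPoisson}), using the observation that all crossing data is forgotten by $\tilde{p}$. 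Both routes work and require the same two convention checks: that the two smoothings at an interior crossing correspond to $\mathsf{E}_p(a,b)$ and $\mathsf{e}_p(a,b)$ with consistent signs, and the dictionary between Muller's counts $\#\ell,\#r$ and the paper's $n_{\pm}(a,b)$. One caution on your boundary bookkeeping: the exponent relating the two products once the interior is resolved is $q^{\,n_+(a,b)-n_-(a,b)}$, not $q^{\frac{1}{2}(n_+(a,b)-n_-(a,b))}$; the factor $\frac{1}{2}$ in the bracket arises because each product differs from the simultaneous-crossing diagram by $q^{\pm\frac{1}{2}(n_+-n_-)}$, or equivalently because $q^{k}-q^{-k}=kh+O(h^{3})$ with $q=e^{h/2}$ applied to $k=\varepsilon+\frac{1}{2}N$ (in your telescoping, $q^{n_+-n_-}-1=\frac{1}{2}(n_+-n_-)h+O(h^{2})$). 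Your stated first-order boundary contribution $\frac{1}{2}(n_+-n_-)\left|a\right|\left|b\right|$ is nevertheless the correct one, so this is a slip in the intermediate statement rather than a gap in the argument.
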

\begin{proof}
We can construct the $\mathbb{C}$-linear isomorphism $\SK{\Sigma}\to\mathbb{C}\mathsf{SMulti}(\Sigma)$ in the same way as the construction of the map $\Phi$ in the proof of Lemma~\ref{freeness}.
Therefore, 
$\SKH{\Sigma}$ is isomorphic to $\SK{\Sigma}[[h]]$ through $\mathbb{C}\mathsf{SMulti}(\Sigma)[[h]]$.

Comparing the relations of $\SKH{\Sigma}$ and $\SK{\Sigma}$, 
the algebra homomorphism $\tilde{p}\colon\SKH{\Sigma}\to\SK{\Sigma}$ is obtained by $h\mapsto 0$ and $[D]\mapsto\left|D\right|$. 
The algebra isomorphism $p\colon\SKH{\Sigma}/h\SKH{\Sigma}\to\SK{\Sigma}$ is induced by $\tilde{p}$ because $\ker\tilde{p}\cong\ker \tilde{p}\circ\Psi=h\mathbb{C}\mathsf{SMulti}(\Sigma)[[h]]\cong h\SKH{\Sigma}$.
We need to confirm the equation $\{\tilde{p}(x), \tilde{p}(y)\}=\tilde{p}((xy-yx)/h)$ for any $x,y\in\SKH{\Sigma}$.
For any link diagram $D\in\mathscr{D}(\Sigma)$, we can describe $[D]\in\SKH{\Sigma}$ as a sum of multiplications of loops and arcs by using the skein relation and the boundary skein relation.
Therefore, we only have to calculate the commutator $\tilde{p}(([a][b]-[b][a])/h)$ such that $a$ and $b$ are generic curves in $\Sigma$.
We define a link diagram $D(a,b)$ as a link diagram replacing over and under crossings of $a\cdot b$ with simultaneous-crossings at marked points. 
We denote the strands contained in the neiborhood of M of $D(a,b)$ by $a_1, a_2, b_1$ and $b_2$ where $a_i$ (resp. $b_i$) is contained in $a$ (resp. $b$) for $i=1,2$. 
Then we obtaine the following equations by using boundary skein relation: 
\begin{align*}
[a][b]=[a\cdot b]
=&q^{\frac{1}{2}(\#\ell(a_1)-\#r(a_1))}q^{\frac{1}{2}(\#\ell(a_2)-\#r(a_2))}
[D(a,b)],\\
[b][a]=[b\cdot a]
=&q^{\frac{1}{2}(\#r(a_1)-\#\ell(a_1))}q^{\frac{1}{2}(\#r(a_2)-\#\ell(a_2))}
[D(b,a)].
\end{align*}
We denote the intersection points of $a$ and $b$ in $\operatorname{int}S$ by 
$\{p_1,p_2,\dots,p_n\}$.
Then we eliminate the crossings at $p_i$ of $D(a,b)$ and $D(b,a)$ by using the skein relation;
\begin{align*}
[D(a,b)]
=&\sum_{\varepsilon_i\in\{0,\infty\}}q^{\#\{\varepsilon_i=0\}-\#\{\varepsilon_i=\infty\}}
[(\cdots((D(a,b)_{\varepsilon_1}^{p_1})_{\varepsilon_2}^{p_2})\cdots)_{\varepsilon_n}^{p_n}],\\
[D(b,a)]
=&\sum_{\varepsilon_i'\in\{0,\infty\}}q^{\#\{\varepsilon_i'=0\}-\#\{\varepsilon_i'=\infty\}}
[(\cdots((D(b,a)_{\varepsilon_1'}^{p_1})_{\varepsilon_2'}^{p_2})\cdots)_{\varepsilon_n'}^{p_n}].
\end{align*}
The elimination $\varepsilon_i=0$ (resp. $\varepsilon_i=\infty$) of $p_i$ of $D(a,b)$ coincides with the elimination $\varepsilon_i'=\infty$ (resp. $\varepsilon_i'=0$) of $p_i$ of $D(b,a)$ at the neighborhood of $p_i$.
Therefore,
\begin{align}\label{noncomm}
[a][b]-[b][a]
=\sum_{\varepsilon_i\in\{0,\infty\}}(q^{\varepsilon+\frac{1}{2}N}-q^{-\varepsilon-\frac{1}{2}N})[(\cdots((D(a,b)_{\varepsilon_1}^{p_1})_{\varepsilon_2}^{p_2})\cdots)_{\varepsilon_n}^{p_n}]
\end{align}
where $\varepsilon=\#\{\varepsilon_i=0\}-\#\{\varepsilon_i=\infty\}$ 
and $N=\sum_{i=1}^2\#\{\ell(a_i)\}-\#\{r(a_i)\}$.
We calculate (\ref{noncomm}) by substituting $q=\exp(h/2)$ modulo $h^2\SKH{\Sigma}$.
Then, 
\[
 [a][b]-[b][a]=\sum_{\varepsilon_i\in\{0,\infty\}}\left(\varepsilon+\frac{N}{2}\right)h[(\cdots((D(a,b)_{\varepsilon_1}^{p_1})_{\varepsilon_2}^{p_2})\cdots)_{\varepsilon_n}^{p_n}] \pmod{h^2\SKH{\Sigma}}.
\]
Considering underlying immersion of $[(\cdots((D(a,b)_{\varepsilon_1'}^{p_1})_{\varepsilon_2'}^{p_2})\cdots)_{\varepsilon_n'}^{p_n}]$, we get
\[
 \tilde{p}([(\cdots((D(a,b)_{\varepsilon_1'}^{p_1})_{\varepsilon_2'}^{p_2})\cdots)_{\varepsilon_n'}^{p_n}])=\left|\mathsf{X}_{p_n}\cdots\mathsf{X}_{p_2}\mathsf{X}_{p_1}(a,b)\right|
\]
where $\mathsf{X}_{p_i}=\mathsf{E}_{p_i}$ if $\varepsilon_i=0$ and $\mathsf{X}_{p_i}=\mathsf{e}_{p_i}$ if $\varepsilon_i=\infty$.
We can also show that
\[
 \varepsilon=\#\{\mathsf{X}_{p_i}=\mathsf{E}_{p_i}\}-\#\{\mathsf{X}_{p_i}=\mathsf{e}_{p_i}\}
\]
and
\[
 N=n_{+}(a,b)-n_{-}(a,b).
\]
Therefore,
\begin{align*}
\tilde{p}\left(\frac{[a][b]-[b][a]}{h}\right)
=&\sum_{\mathsf{X}\in\{\mathsf{E},\mathsf{e}\}}
(\#\{\mathsf{X}_i=\mathsf{E}_i\}-\#\{\mathsf{X}_i=\mathsf{e}_i\})
\left|\mathsf{X}_1\mathsf{X}_2\cdots\mathsf{X}_n(a,b)\right|\\
&\quad+\sum_{\mathsf{X}\in\{\mathsf{E},\mathsf{e}\}}
\frac{1}{2}(n_{+}(a,b)-n_{-}(a,b))\left|\mathsf{X}_1\mathsf{X}_2\cdots\mathsf{X}_n(a,b)\right|\\
=&\{\,\left|a\right|,\left|b\right|\,\}\quad\text{by (\ref{bracket-SKH})}.
\end{align*}
Thus, 
the map $\tilde{p}$ induces an isomorphism of Poisson algebras $\SKH{\Sigma}/h\SKH{\Sigma}$ 
and $\SK{\Sigma}$.
\end{proof}

\section{Appendix}\label{sec;Appendix}
In this section, we review the definition of a (bi-, co-)Poisson (bi-, co-)algebra.

Let $R$ be a commutative ring with unit and  $S$ a module over $\mathbf{R}$. 
We assume that $S$ is a commutative associative algebra equipped with a multiplication $m\colon S\otimes S\to S$ and a linear map $\nabla=\{\cdot,\cdot\}\colon S\otimes S\to S$. 
The symbol $\otimes$ denotes the tensor product over $\mathbf{R}$ in this section.
\begin{DEF}
The pair $(S,\nabla)$ is a Poisson algebra if it satisfies the following conditions.
\begin{enumerate}
\item $\nabla\circ\sigma_{1,2}=-\nabla$ \quad (\emph{the skew symmetry}), 
\item $\nabla\circ (\nabla\otimes\textup{id})\circ(\tau^2+\tau+\textup{id}_{S^{\otimes 3}})=0$ \quad (\emph{the Jacobi identity}), 
\item $\nabla\circ(\textup{id}\otimes m)=m\circ(\nabla\otimes\textup{id}+(\textup{id}\otimes\nabla)\circ\sigma_{1,2})$ \quad (\emph{the Leibniz rule})
\end{enumerate}
where $\sigma_{i,j}$ is an linear operator which exchanges $i$-th and $j$-th tensor components and $\tau=\sigma_{2,3}\circ\sigma_{1,2}$. We call the linear map $\nabla$ \emph{the Poisson bracket}. 
We can describe above conditions explicitly as the following. For any $a,b,c$ in $S$
\begin{enumerate}
\item $\{a,b\}=-\{b,a\}$, 
\item $\{a,\{b,c\}\}+\{b,\{c,a\}\}+\{c,\{a,b\}\}=0$, 
\item $\{a,bc\}=\{a,b\}c+b\{a,c\}$.
\end{enumerate}
\end{DEF}
\begin{EX}
The symmetric algebra $\operatorname{Sym}(\mathfrak{g})$ of a Lie algebra $\mathfrak{g}$ over $\mathbf{R}$ is a Poisson algebra. By using the Leibniz rule, the Lie bracket of $\mathfrak{g}$ is extended to the Poisson bracket on $\operatorname{Sym}(\mathfrak{g})$.
\end{EX}

We assume that $S$ is a cocommutative coassosiative coalgebra equipped with a comultiplication $\Delta\colon S\to S\otimes S$ and a linear map $\delta\colon S\to S\otimes S$.
\begin{DEF}
The pair $(S,\delta)$ is a co-Poisson coalgebra if it satisfies the following conditions.
\begin{enumerate}
\setcounter{enumi}{3}
\item $\sigma_{1,2}\circ\delta=-\delta$ \quad (\emph{the co-skew symmetry}), 
\item $(\tau^2+\tau+\textup{id}_{S^{\otimes 3}})\circ(\textup{id}\otimes\delta)\circ\delta=0$ \quad (\emph{the co-Jacobi identity}),
\item $(\textup{id}\otimes\Delta)\circ\delta=(\delta\otimes\textup{id}+\sigma_{1,2}\circ(\textup{id}\otimes\delta))\circ\Delta$ \quad (\emph{the co-Leibniz rule}).
\end{enumerate}
\end{DEF}
We call the linear map $\delta$ \emph{the Poisson cobracket}. The definition of a co-Poisson coalgebra is the dual of a Poisson algebra.
\begin{DEF}
A bialgebra $(S,m,\Delta)$ is a bi-Poisson bialgebra if $S$ equips the above linear maps $\nabla$ and $\delta$ which satisfy condition (1)--(6) and the following compatibility conditions.
\begin{enumerate}
\setcounter{enumi}{6}
\item $\Delta\circ\nabla=\nabla_{S\otimes S}\circ\sigma_{2,3}\circ\Delta_{S\otimes S}$ \quad (\emph{the $(\Delta,\nabla)$-compatibility}),
\item $\delta\circ m=m_{S\otimes S}\circ\sigma_{2,3}\circ\delta_{S\otimes S}$ \quad (\emph{the $(m,\delta)$-compatibility}),
\item $\delta\circ\nabla=\nabla_{S\otimes S}\circ\sigma_{2,3}\circ\delta_{S\otimes S}$ \quad (\emph{the $(\nabla,\delta)$-compatibility})
\end{enumerate}
where $m_{S\otimes S}=(m\otimes m)\circ\sigma_{2,3}$, $\Delta_{S\otimes S}=\sigma_{2,3}\circ(\Delta\otimes\Delta)$, $\nabla_{S\otimes S}=(\nabla\otimes m+m\otimes\nabla)\circ\sigma_{2,3}$ and $\delta_{S\otimes S}=\sigma_{2,3}\circ(\delta\otimes\Delta+\Delta\otimes\delta)$.
\end{DEF}
\begin{EX}
The symmetric algebra $\operatorname{Sym}(\mathfrak{g})$ of a Lie coalgebra $(\mathfrak{g},\delta)$ over $\mathbf{R}$ is a co-Poisson bialgebra. The comultiplication of $\operatorname{Sym}(\mathfrak{g})$ is defined by extending $\Delta(x)=x\otimes 1+1\otimes x$ for any $x$ in $\mathfrak{g}$ as an algebra homomorphism. By using the $(m,\delta)$-compatibility, the Lie cobracket of $\mathfrak{g}$ is extended to the Poisson cobracket on $\operatorname{Sym}(\mathfrak{g})$.
\end{EX}
\bibliographystyle{amsplain}
\bibliography{yuasa1}
\end{document}